\pdfoutput=1
\RequirePackage{ifpdf}
\ifpdf % We are running pdfTeX in pdf mode
\documentclass[pdftex]{sigma}
\else
\documentclass{sigma}
\fi

\usepackage{cancel}

\numberwithin{equation}{section}

\newtheorem{Theorem}{Theorem}[section]
\newtheorem{Corollary}[Theorem]{Corollary}
\newtheorem{Proposition}[Theorem]{Proposition}
\newtheorem{Lemma}[Theorem]{Lemma}
 { \theoremstyle{definition}
\newtheorem{Definition}[Theorem]{Definition}
\newtheorem{Remark}[Theorem]{Remark} }

\def\dd{{\rm d}}

\begin{document}

\allowdisplaybreaks

\newcommand{\arXivNumber}{1804.05688}

\renewcommand{\thefootnote}{}

\renewcommand{\PaperNumber}{087}

\FirstPageHeading

\ShortArticleName{Notes on Non-Generic Isomonodromy Deformations}

\ArticleName{Notes on Non-Generic Isomonodromy Deformations\footnote{This paper is a~contribution to the Special Issue on Painlev\'e Equations and Applications in Memory of Andrei Kapaev. The full collection is available at \href{https://www.emis.de/journals/SIGMA/Kapaev.html}{https://www.emis.de/journals/SIGMA/Kapaev.html}}}

\Author{Davide GUZZETTI}
\AuthorNameForHeading{D.~Guzzetti}

\Address{SISSA, Via Bonomea 265 - 34136 Trieste, Italy}
\Email{\href{mailto:guzzetti@sissa.it}{guzzetti@sissa.it}}

\ArticleDates{Received April 17, 2018, in final form August 14, 2018; Published online August 21, 2018}

\Abstract{Some of the main results of [Cotti G., Dubrovin B., Guzzetti D., \textit{Duke Math.~J.}, to appear, arXiv:1706.04808], concerning non-generic isomonodromy deformations of a certain linear differential system with irregular singularity and coalescing eigenvalues, are reviewed from the point of view of Pfaffian systems, making a~distinction between weak and strong isomonodromic deformations. Such distinction has a~counterpart in the case of Fuchsian systems, which is well known as Schlesinger and non-Schlesinger deformations, reviewed in Appendix~A.}

\Keywords{isomonodromy deformations; Stokes phenomenon; Pfaffian system; coalescing eigenvalues; Schlesinger deformations}

\Classification{34M56; 34M35; 34M40}

\renewcommand{\thefootnote}{\arabic{footnote}}
\setcounter{footnote}{0}

\section{Introduction}
These notes partly touch the topics of my talk in Ann Arbor at the conference in memory of Andrei Kapaev, August 2017. They are a reworking of some of the main results of \cite{CDG}, concerning non-generic isomonodromy deformations of the differential system \eqref{11marzo2018-1} below. The approach here is different from \cite{CDG}, since I will start from the point of view of Pfaffian systems. This allows to introduce the main theorem (Theorem \ref{25marzo2018-1} below) in a relatively simple way (provided we give for granted another result of \cite{CDG} summarised in Theorem \ref{23marzo2018-3} below). The approach here is also an opportunity to review the difference between {\it weak} and {\it strong} isomonodromy deformations.

In Section \ref{19marzo2018-1}, we define ``weak'' isomonodromic deformations of the system~\eqref{11marzo2018-1}, through the Pfaffian system~\eqref{11marzo2018-2} below, that we characterise together with its fundamental matrix solutions. The residue matrix~$A(u)$ is not assumed to be diagonalizable or non-resonant, so that a first non-generic issue is included in the discussion. In Section~\ref{19marzo2018-2}, we define and characterise ``strong'' isomonodromy deformations of the differential system~\eqref{11marzo2018-1} in terms of solutions of the Pfaffian system and in terms of essential monodromy data. We recover the total differential system used in the last part of~\cite{CDG}, a special case of which (for $A(u)$ skew-symmetric) has been well known in the theory of Frobenius manifolds \cite{Dub1,Dub2}. In Section~\ref{25marzo2018-3}, we explain some of the main results of \cite{CDG}, particularly Theorem~\ref{25marzo2018-1}, which extend strong isomonodromy deformations to the non-generic case when the matrix $\Lambda= \operatorname{diag}(u_1,\dots,u_n)$ in system~\eqref{11marzo2018-1} has coalescing eigenvalues $(u_i-u_j)\to 0$ for some $1\leq i \neq j \leq n$. The proof of Theorem \ref{25marzo2018-1} is based on Theorem \ref{23marzo2018-3}, which holds for a differential system more general than~\eqref{11marzo2018-1}, not necessarily isomonodromic (see system \eqref{22marzo2018-6}). Paper~\cite{CDG} is unavoidably long because of a careful set up for the background and the proof of Theorem~\ref{23marzo2018-3}. Given it for granted, in these notes we can introduce and prove Theorem~\ref{25marzo2018-1} in a relatively short manner, starting from the discussion of the Pfaffian system~\eqref{11marzo2018-2}.

At the end of these notes, I would like to show how the notion of weak and strong isomo\-nodromy deformations also arises naturally in the framework of Fuchsian systems; in this case, ``weak'' and ``strong'' deformations respectively coincide with {\it non-Schlesinger} and {\it Schlesinger} deformations. We will review this issue in Appendix~\ref{appendixA}.

Several detailed comments on the existing literature concerning non-generic isomonodromy deformations have been included in the introduction of~\cite{CDG}, so we do not repeat them here. However, a reference is missing from the cited introduction (in the arXiv version), which I will include here in Remark~\ref {10aprile2018-1} of Section~\ref{25marzo2018-3} below.

\section{A Pfaffian system defining weak isomonodromic deformations}\label{19marzo2018-1}

In this section, particularly in Proposition \ref{16marzo2018-1}, we characterise a Pfaffian system responsible for the (weak) isomonodromic deformations of the differential system
\begin{gather}\label{11marzo2018-1}
\partial_z Y=\left(\Lambda+\frac{A(u)}{z}\right)Y.
\end{gather}
The above appears naturally as the (inverse) Laplace transform of a Fuchsian system of Okubo type, with poles at $u_1,\dots,u_n$ \cite{BJL4,Dub4,guz2016,SCHA}. A particular case of \eqref{11marzo2018-1} is at the core of the isomonodromic deformation approach to Frobenius manifolds \cite{Dub1,Dub2}.

Consider an $n\times n$ matrix Pfaffian system
\begin{gather}\label{11marzo2018-2}
\dd Y=\omega Y,\qquad \omega=\omega_0(z,u)\dd z+\sum_{j=1}^n \omega_j(z,u) \dd u_j.
\end{gather}
The complex $n+1$ variables will be denoted by $(z,u)$, with $u:=(u_1,\dots,u_n)$. We assume that
\begin{gather}\label{11marzo2018-6}
\omega_0=\Lambda+\frac{A(u)}{z},
\end{gather}
with $\Lambda=\operatorname{diag}(u_1,\dots,u_n)$ and $A(u)$ an $n\times n$ matrix, so that \eqref{11marzo2018-2} can be viewed as a deformation of the differential system~\eqref{11marzo2018-1}. We suppose that $\omega_1,\dots,\omega_n$ are holomorphic of \smash{$(z,u)\in \mathbb{C}\times \mathbb{D}\big(u^0\big)$}, where $\mathbb{D}\big(u^0\big)$ is a polydisc centred at $u^0=\big(u_1^0,\dots,u_n^0\big)$, contained in
\begin{gather*}
\mathbb{C}^n\backslash \bigcup_{i\neq j} \{u_i-u_j=0\}.
\end{gather*}
 We also assume that $A(u)$ is holomorphic on $\mathbb{D}\big(u^0\big)$ and that $z=\infty$ is at most a pole of the~$\omega_j(z,u)$, so that \eqref{11marzo2018-2} is meromorphic on $\mathbb{P}^1\times \mathbb{D}\big(u^0\big)$. The complete Frobenius integrability of the system \eqref{11marzo2018-2} is expressed by
\begin{gather*}
\dd\omega =\omega \wedge \omega,
\end{gather*}
namely, letting $(x_0,x_1,\dots,x_n):=(z,u_1,\dots,u_n)$,
\begin{gather}\label{11marzo2018-3}
\frac{\partial \omega_\beta}{\partial x_\alpha}+\omega_\beta\omega_\alpha=\frac{\partial \omega_\alpha}{\partial x_\beta}+\omega_\alpha\omega_\beta, \qquad \alpha\neq\beta=0,1,\dots,n.
\end{gather}

If the integrability condition holds \cite{Bolibruch0,Haraoka,IKSY,YT}, then \eqref{11marzo2018-2} admits of a fundamental matrix solution $Y(z,u)$, which is holomorphic on $\mathcal{R}\times \mathbb{D}\big(u^0\big)$, where
\begin{gather*}\mathcal{R}:= \text{the universal covering of $\mathbb{P}^1\backslash\{0,\infty\}$}.
\end{gather*}
Its monodromy matrix $M$ associated with a simple loop $\gamma$ around $z=0$, defined by
\begin{gather*}Y(\gamma z,u)=Y(z,u)M, \end{gather*}
is independent of $u$ (and of course of $z$). The notation above means that $\gamma$ transforms $z\in \mathcal{R}$ to $\gamma z\in \mathcal{R}$ ($z$ and $\gamma z$ are in the same fibre over a point of $ \mathbb{P}^1\backslash \{0,\infty\}$, which we still denote by~$z$, to avoid heavy notations). To show that $M$ is constant, observe that $Y(\gamma z,u)$ is a solution (it is $Y(\cdot,u)$ seen as function on $\mathcal{R}$ evaluated at $\gamma z$). Therefore,
\begin{align}\label{25marzo2018-4-MENO}
\omega &=\dd Y(\gamma z,u) Y(\gamma z, u)^{-1}=\dd Y(z,u) Y(z,u)^{-1}+Y(z,u) \big(\dd M M^{-1}\big) Y(z,u)^{-1}\\
\label{25marzo2018-4}
&= \omega +Y(z,u) \big(\dd M M^{-1}\big) Y(z,u)^{-1} \quad \Longleftrightarrow \quad \dd M=0.
\end{align}
 In order to avoid heavy notations, we use the letter ``$\dd$'' either for the differential of a~func\-tion~$f(z,u)$ of variables~$(z,u)$ (like~$\dd Y$ above), and for the differential of a function of~$u$ alone (like~$\dd M$ above).

\begin{Definition}\label{18marzo2018-1}We call \eqref{11marzo2018-1}, regarded as the ``$z$-component'' of a Pfaffian system~\eqref{11marzo2018-2}, a~{\it weak} isomonodromic family of differential systems,
and~$Y(z,u)$ a~{\it weak} isomonodromic family of fundamental matrix solutions.
\end{Definition}
 By ``weak'' we mean that the monodromy matrix $M$ is constant, but other monodromy data, to be introduced below, may be not.

\begin{Lemma}\label{24marzo2018-2}Let $\omega_0$ be as in \eqref{11marzo2018-6}, let $\omega_1,\dots,\omega_n$ be holomorphic of $(z,u)\in \mathbb{C}\times \mathbb{D}\big(u^0\big)$, and~$A(u)$ holomorphic on $\mathbb{D}\big(u^0\big)$. The integrability condition \eqref{11marzo2018-3} implies that~$A$ satisfies
\begin{gather}\label{23marzo2018-8}
\frac{\partial A}{\partial u_j}=[\omega_j(0,u),A], \qquad j=1,\dots,n,
\end{gather}
and that the above is Frobenius integrable. Moreover, the system
\begin{gather}\label{23marzo2018-7}
\dd G=\left(\sum_{j=1}^n \omega_j(0,u)\dd u_j\right) G
\end{gather}
is Frobenius integrable.
\end{Lemma}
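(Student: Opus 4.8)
The plan is to extract all three assertions directly from the integrability condition \eqref{11marzo2018-3}, specialising the index pair $(\alpha,\beta)$ and using that each $\omega_j$ is holomorphic at $z=0$, so that the relations may legitimately be evaluated there.

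First I would derive \eqref{23marzo2018-8}. Take \eqref{11marzo2018-3} with $\alpha=0$ and $\beta=j\in\{1,\dots,n\}$, i.e.\ $\partial_z\omega_j+\omega_j\omega_0=\partial_{u_j}\omega_0+\omega_0\omega_j$. Substituting $\omega_0=\Lambda+A/z$ from \eqref{11marzo2018-6} and using $\partial_{u_j}\Lambda=E_{jj}$, the matrix unit with $1$ in the $(j,j)$ entry, then multiplying throughout by $z$ to clear the pole, yields
\[
z\,\partial_z\omega_j+z[\omega_j,\Lambda]-z\,E_{jj}+[\omega_j,A]-\partial_{u_j}A=0 .
\]
Since $\omega_j$ is holomorphic on $\mathbb{C}\times\mathbb{D}\big(u^0\big)$, the terms carrying an explicit factor $z$ vanish at $z=0$, and evaluating there leaves $\partial_{u_j}A=[\omega_j(0,u),A]$, which is \eqref{23marzo2018-8}.

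Next I would prove that \eqref{23marzo2018-7} is Frobenius integrable. Writing $\Omega:=\sum_{j=1}^n\omega_j(0,u)\,\dd u_j$, the system $\dd G=\Omega G$ is integrable precisely when $\dd\Omega=\Omega\wedge\Omega$, which componentwise reads $\partial_{u_j}\omega_k(0,u)-\partial_{u_k}\omega_j(0,u)=[\omega_j(0,u),\omega_k(0,u)]$ for all $j,k$. But this is exactly \eqref{11marzo2018-3} for the pair $(\alpha,\beta)=(j,k)$ with $j,k\in\{1,\dots,n\}$, namely $\partial_{u_j}\omega_k+\omega_k\omega_j=\partial_{u_k}\omega_j+\omega_j\omega_k$, evaluated at $z=0$; this evaluation is licit since $\omega_j,\omega_k$ are jointly holomorphic, so that setting $z=0$ commutes with the $u$-derivatives. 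Hence \eqref{23marzo2018-7} is integrable.

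Finally, the integrability of \eqref{23marzo2018-8} I would deduce from that of \eqref{23marzo2018-7}. The conceptual route is quickest: if $G$ solves the now-integrable system \eqref{23marzo2018-7}, then $A=GA_0G^{-1}$ with $A_0$ constant satisfies $\dd A=[\Omega,A]$, which is \eqref{23marzo2018-8} in total-differential form, so solutions exist for arbitrary initial data and the system is consistent. Alternatively one checks the cross-derivatives of \eqref{23marzo2018-8} by hand: $\partial_{u_k}\partial_{u_j}A-\partial_{u_j}\partial_{u_k}A$ collapses, via the Jacobi identity, to $\big[\partial_{u_k}\omega_j(0,u)-\partial_{u_j}\omega_k(0,u)+[\omega_j(0,u),\omega_k(0,u)],A\big]$, which vanishes by the flatness relation just established. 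As every step is an identity, the only point requiring care is the passage to $z=0$ in the two families of integrability conditions; this is justified by the assumed holomorphy of the $\omega_j$ and of $A$ at the origin, and I expect no genuine obstacle beyond this bookkeeping.
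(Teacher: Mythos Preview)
Your argument is correct and matches the paper's: extract \eqref{23marzo2018-8} from the $z^{-1}$ part of the $(0,j)$ integrability relation (the paper identifies the coefficient of $z^{-1}$, you multiply by $z$ and set $z=0$---the same thing), evaluate the $(j,k)$ relation at $z=0$ to obtain the flatness of \eqref{23marzo2018-7}, and then check the cross-derivatives of \eqref{23marzo2018-8} (the paper expands explicitly rather than invoking the Jacobi identity, but it is the same computation). Your conceptual alternative for the last step---conjugating a constant $A_0$ by a fundamental solution $G$ of \eqref{23marzo2018-7}---is not in this proof but is precisely the mechanism used in the subsequent Proposition~\ref{11marzo2018-7}.
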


\begin{proof} Condition \eqref{11marzo2018-3} for $\beta=0$ and $\alpha=j\geq 1$ yields
\begin{gather}\label{24marzo2018-1}
\frac{\partial}{\partial u_j}
\left(\Lambda+\frac{A}{z}\right)+\left(\Lambda+\frac{A}{z}\right)\omega_j(z,u)=\frac{\partial \omega_j(z,u)}{\partial z}
+\omega_j(z,u)\left(\Lambda+\frac{A}{z}\right).
\end{gather}
Since $\omega_j(z,u)=\omega(0,u)+$ Taylor series in $z$, identification of the coefficients of $z^{-1}$ yields \eqref{23marzo2018-8}. The condition~\eqref{11marzo2018-3} for $\beta=i\geq 1$ and $\alpha=j\geq 1$ is regular at $z=0$. In particular, for $z=0$ we have
\begin{gather}\label{24marzo2018-3}
\frac{\partial \omega_i(0,u)}{\partial u_j}+\omega_i(0,u)\omega_j(0,u)=\frac{\partial \omega_j(0,u)}{\partial u_i}+\omega_j(0,u)\omega_i(0,u),\qquad i\neq j=1,\dots,n,
\end{gather}
which is the Frobenius integrability condition for \eqref{23marzo2018-7}. It is also the integrability condition of~\eqref{23marzo2018-8}. Indeed, let us compute (for brevity, we write $\omega_j(0):=\omega_j(0,u)$)
\begin{gather*}
\partial_k[\omega_j(0),A]-\partial_j[\omega_k(0),A]= \bigr(\partial_k\omega_j(0) A+\omega_j(0)\partial_kA-\partial_kA \omega_j(0)-A\partial_k\omega_j(0)\bigr)\\
\qquad\quad{} -\bigl(\partial_j\omega_k(0) A+\omega_k(0)\partial_jA-\partial_jA \omega_k(0)-A\partial_j\omega_k(0)\bigr)\\
\qquad{} =\bigr(\partial_k\omega_j(0) A+\omega_j(0)[\omega_k(0)A-A\omega_k(0)]-[\omega_k(0)A-A\omega_k(0)] \omega_j(0)-A\partial_k\omega_j(0)\bigr)\\
\qquad\quad{} -\bigr(\partial_j\omega_k(0) A+\omega_k(0)[\omega_j(0)A-A\omega_j(0)]-[\omega_j(0)A-A\omega_j(0)] \omega_k(0)-A\partial_j\omega_k(0)\bigr)\\
\qquad{} =\bigl\{[\partial_k\omega_j(0)+\omega_j(0)\omega_k(0)]-[\partial_j\omega_k(0)+\omega_k(0)\omega_j(0)]\bigr\}A\\
\qquad\quad{} + A\bigl\{[\omega_k(0)\omega_j(0)-\partial_k\omega_j(0)]-[\omega_j(0)\omega_k(0)-\partial_j\omega_k(0)]\bigr\}.
\end{gather*}
The last expression is zero for any possible $A$ if and only if~\eqref{24marzo2018-3} holds.
\end{proof}

The following is a standard result concerning the residue matrix at Fuchsian singularity of a~Pfaffian system (see for example~\cite{Haraoka}).

\begin{Proposition}\label{11marzo2018-7}Let $\omega_0$ be as in \eqref{11marzo2018-6}, let $\omega_1,\dots,\omega_n$ be holomorphic of $(z,u)\in \mathbb{C}\times \mathbb{D}\big(u^0\big)$, and $A(u)$ holomorphic on $\mathbb{D}\big(u^0\big)$. Then, $A(u)$ is holomorphically similar to a constant $($i.e., independent of $u)$ Jordan form $J$, namely there exists a holomorphic fundamental matrix $G(u)$ of \eqref{23marzo2018-7} on $\mathbb{D}\big(u^0\big)$, such that
\begin{gather*}
G(u)^{-1}A(u)G(u)=J.
\end{gather*}
In particular, this means that the weak isomonodromy deformation \eqref{11marzo2018-1} is isospectral.
\end{Proposition}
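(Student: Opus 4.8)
The plan is to build the conjugating matrix $G(u)$ as a fundamental solution of the auxiliary system \eqref{23marzo2018-7}, and then to check directly that $G(u)^{-1}A(u)G(u)$ does not depend on $u$. First I would invoke Lemma~\ref{24marzo2018-2}, which supplies exactly the two ingredients needed: that \eqref{23marzo2018-7} is Frobenius integrable, and that $A$ satisfies the deformation equation \eqref{23marzo2018-8}, namely $\partial_{u_j}A=[\omega_j(0,u),A]$. Since the coefficient one-form $\sum_{j}\omega_j(0,u)\,\dd u_j$ is holomorphic on the polydisc $\mathbb{D}\big(u^0\big)$, which is contractible and in particular simply connected, integrability produces a single-valued fundamental matrix $G(u)$, holomorphic and everywhere invertible on $\mathbb{D}\big(u^0\big)$, with $\partial_{u_j}G=\omega_j(0,u)\,G$ for each $j$.

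Next I would set $J(u):=G(u)^{-1}A(u)G(u)$ and differentiate. Using $\partial_{u_j}G=\omega_j(0)\,G$ (so that $\partial_{u_j}G^{-1}=-G^{-1}\omega_j(0)$) together with $\partial_{u_j}A=\omega_j(0)A-A\omega_j(0)$ from \eqref{23marzo2018-8}, the three contributions to $\partial_{u_j}J$ are $-G^{-1}\omega_j(0)AG$, then $G^{-1}\omega_j(0)AG-G^{-1}A\omega_j(0)G$, and finally $G^{-1}A\omega_j(0)G$; these cancel in pairs, giving $\partial_{u_j}J=0$ for all $j$. As $\mathbb{D}\big(u^0\big)$ is connected, $J$ is constant, and this already yields the isospectral claim, since the eigenvalues of $A(u)$ coincide with those of the $u$-independent matrix $J$.

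To upgrade the constant conjugate to a genuine Jordan form, I would use the freedom of right multiplication by a constant: if $C$ is any constant invertible matrix, then $GC$ again solves \eqref{23marzo2018-7}, and $\big(GC\big)^{-1}A\big(GC\big)=C^{-1}JC$. Choosing $C$ so that $C^{-1}JC$ equals the Jordan canonical form of $J$ and replacing $G$ by $GC$ then gives a holomorphic fundamental matrix of \eqref{23marzo2018-7} with $G(u)^{-1}A(u)G(u)=J$ a constant Jordan matrix, as required.

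The argument is essentially the cancellation computation in the second paragraph, which is forced once the deformation equation for $A$ and the defining equation for $G$ are available. The only point requiring care is the global existence of a holomorphic, everywhere invertible $G$ on the whole polydisc; I expect this to be the main obstacle only in the bookkeeping sense, and I would dispatch it by the standard Frobenius theory on a simply connected domain, where integrability rules out monodromy and the determinant of $G$ satisfies a first-order equation and hence never vanishes.
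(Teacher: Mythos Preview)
Your proof is correct and follows essentially the same approach as the paper's: take a holomorphic fundamental solution $G$ of \eqref{23marzo2018-7}, use \eqref{23marzo2018-8} together with the defining equation for $G$ to verify that $\partial_{u_j}\big(G^{-1}AG\big)=0$, and then right-multiply $G$ by a constant invertible matrix to bring the resulting constant to Jordan form. Your write-up is in fact a bit more explicit about the existence and global invertibility of $G$ on the simply connected polydisc, but the argument is otherwise identical.
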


\begin{proof} Take a fundamental matrix $\tilde{G}(u)$ of \eqref{23marzo2018-7}, which is holomorphic on $\mathbb{D}\big(u^0\big)$. Then, using~\eqref{23marzo2018-8} and~\eqref{23marzo2018-7}, we have
\begin{gather*}
\partial_j\big(\tilde{G}^{-1}A\tilde{G}\big)=\tilde{G}^{-1} A \partial_j \tilde{G}+\tilde{G}^{-1}\partial_jA \tilde{G}-\tilde{G}^{-1}\partial_j\tilde{G} \tilde{G}^{-1}A \tilde{G}\\
\hphantom{\partial_j\big(\tilde{G}^{-1}A\tilde{G}\big)}{} =\tilde{G}^{-1}A \omega_j(0,u) \tilde{G} +\tilde{G}^{-1}\partial_jA \tilde{G} -\tilde{G}^{-1}\omega_j(0,u) A \tilde{G}\\
\hphantom{\partial_j\big(\tilde{G}^{-1}A\tilde{G}\big)}{}=\tilde{G}^{-1}\left([A,\omega_j(0,u)]+\partial_jA\right)\tilde{G}=0.
\end{gather*}
This implies that $\mathcal{A}:=\tilde{G}^{-1}A\tilde{G}$ is constant over $\mathbb{D}\big(u^0\big)$, and it has a constant Jordan form $J=\mathcal{G}^{-1}\mathcal{A}\mathcal{G}$ for some invertible matrix $\mathcal{G}$. Thus, the desired holomorphic matrix is $G(u)=\tilde{G}(u)\mathcal{G}$.
\end{proof}

At any fixed $u$, the differential system \eqref{11marzo2018-1} admits fundamental matrix solutions in {\it Levelt form} at $z=0$,
\begin{gather}\label{11marzo2018-8}
Y^{(0)}(z,u)=\widehat{Y}^{(0)}(z,u) z^D z^{L},
\end{gather}
where
\begin{gather}\label{3aprile2018-1}
\widehat{Y}^{(0)}(z,u)=G(u)\left(I+\sum_{j=1}^\infty \Psi_j(u)z^j\right)
\end{gather}
is a convergent matrix valued Taylor series ($I$ stands for the identity matrix). The matrix $G(u)$ puts $A(u)$ in Jordan form $J=G(u)^{-1}A(u)G(u)$ and satisfies~\eqref{23marzo2018-7} as in Proposition~\ref{11marzo2018-7}. The following characterisation holds \cite{AnosBol, Gant}. \begin{itemize}\itemsep=0pt
\item[--] $L$ is block-diagonal $L=L_1\oplus\cdots\oplus L_{\ell}$, with upper triangular matrices $L_q$; each $L_q$ has only one eigenvalue $\sigma_q$, satisfying $0\leq \operatorname{Re} \sigma_q <1$, and $\sigma_p\neq \sigma_q$ for $1\leq p\neq q\leq \ell$.
\item[--] $D$ is a diagonal matrix of integers, which can be split into blocks ${D}_1\oplus\cdots\oplus {D}_{\ell}$ as $L$. The integers ${d}_{q,r}$ in each ${D}_q=\operatorname{diag}({d}_{q,1},{d}_{q,2},\dots )$ form a non-increasing (finite) sequence ${d}_{q,1}\geq {d}_{q,2}\geq \cdots$.
\item[--] The eigenvalues of $A$ are $d_{q,s}+\sigma_q$, for $q=1,\dots,\ell$ and $s$ runs from 1 to some integer $n_q$, if the dimension of $L_q$ is $n_q\times n_q$ . Each block $L_q(u)$ corresponds to the eigenvalues of $A(u)$ which differ by non-zero integers.
\item[--] The expression in square brackets below is holomorphic at $z=0$ and the following limits hold
\begin{gather*}
 \lim_{z\to 0} \big[\widehat{Y}^{(0)}(z,u)\big(D+z^D L z^{-D}\big)\big(\widehat{Y}^{(0)}(z,u)\big)^{-1}\big]\nonumber\\
 \qquad{} = \lim_{z\to 0} \big[G(u)\big(D+z^D L z^{-D}\big)G(u)^{-1}\big]=A(u).%\label{16marzo2018-5}
 \end{gather*}
Equivalently, $ D+ \lim\limits_{z\to 0} z^D L z^{-D}=J$.
\end{itemize}

A Levelt form \eqref{11marzo2018-8} is not uniquely determined by the differential system, since there is a~freedom in the choice of the coefficients~$\Psi_j(u)$ and the exponents~$D$ and~$L$. See~\cite{CDG} (and also~\cite{Dub2}) for more details.

The fundamental matrices \eqref{11marzo2018-8}, seen as solutions of the differential system~\eqref{11marzo2018-1} only, may have exponents~$L$ and $D$ depending on the point~$u$, and the coefficients $\Psi_j(u)$ are not guaranteed to be holomorphic. On the other hand, the Pfaffian system~\eqref{11marzo2018-2} is of Fuchsian type at $z=0$ with the only singular contribution in $\omega$ coming from $\omega_0(z,u)=A(u)/z+\Lambda$. For this reason, system \eqref{11marzo2018-2} also admits solutions of the form~\eqref{11marzo2018-8}, behaving nicely with respect to $u\in \mathbb{D}\big(u^0\big)$, according to the following

 \begin{Proposition} \label{30luglio 2018-3} Let $A(u)$ be holomorphic on $\mathbb{D}\big(u^0\big)$ and $\omega_1(z,u), \dots,\omega_n(z,u)$ be holomorphic on $ \mathbb{C}\times \mathbb{D}\big(u^0\big)$. Then:
 \begin{itemize}\itemsep=0pt
 \item the Pfaffian system \eqref{11marzo2018-2} admits fundamental matrix solutions in Levelt form \eqref{11marzo2018-8}--\eqref{3aprile2018-1};
 \item the exponents $D$ and $L$ are constant;
 \item the convergent series $\eqref{3aprile2018-1}$ defines a holomorphic matrix valued function on $\mathbb{C}\times \mathbb{D}\big(u^0\big)$; equivalently, $Y^{(0)}(z,u)$ is holomorphic on $\mathcal{R}\times \mathcal{U}_\epsilon\big(u^C\big)$.
 \end{itemize}
 \end{Proposition}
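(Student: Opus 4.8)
The plan is to first remove the $u$-dependence of the residue by the gauge transformation of Proposition~\ref{11marzo2018-7}, and then to read off the Levelt form from an honest solution of the integrable Pfaffian system, rather than building the coefficients $\Psi_j$ by a recursion (which would force us to track resonances between eigenvalues of $A$ differing by positive integers). Concretely, I would set $Y=G(u)Z$ with $G(u)$ the holomorphic gauge of Proposition~\ref{11marzo2018-7}, $G^{-1}AG=J$ constant. Since $G$ is $z$-independent and solves \eqref{23marzo2018-7}, the transformed integrable system $\dd Z=\tilde\omega Z$ has $z$-component $\tilde\omega_0=G^{-1}\Lambda G+J/z$ and deformation forms $\tilde\omega_j=G^{-1}(\omega_j(z,u)-\omega_j(0,u))G$, which are holomorphic on $\mathbb{C}\times\mathbb{D}(u^0)$ and, crucially, satisfy $\tilde\omega_j(0,u)=0$ by \eqref{23marzo2018-7}. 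It then suffices to produce a Levelt solution $Z^{(0)}=\Phi(z,u)\,z^Dz^L$ with $\Phi=I+\sum_{j\geq 1}\Psi_j(u)z^j$, constant $D,L$ and holomorphic $\Psi_j$; indeed $Y^{(0)}=G\Phi z^Dz^L$ then gives \eqref{11marzo2018-8}--\eqref{3aprile2018-1} with $\widehat{Y}^{(0)}=G\Phi$.

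Next I would fix $D$ and $L$ once and for all at the base point. At $u^0$ the system $\partial_zZ=(\tilde\omega_0|_{u^0})Z$ is Fuchsian at $z=0$ with residue $J$, so classical Levelt theory \cite{AnosBol,Gant} furnishes a solution $\Phi_0(z)z^Dz^L$ with $\Phi_0(0)=I$. Here $D$ is constant because the Jordan type $J$ of $A(u)$ is $u$-independent (Proposition~\ref{11marzo2018-7}), and $L$, normalised by $0\leq\operatorname{Re}\sigma_q<1$, is constant because it is the principal logarithm $(2\pi i)^{-1}\log M$ of the monodromy, which is constant by \eqref{25marzo2018-4}. I would then take $Z^{(0)}(z,u)$ to be the fundamental solution of the full integrable Pfaffian system with $Z^{(0)}(z,u^0)=\Phi_0(z)z^Dz^L$; by the cited existence theorem for integrable Pfaffian systems \cite{Bolibruch0,Haraoka,IKSY,YT} it is holomorphic on $\mathcal{R}\times\mathbb{D}(u^0)$, and its monodromy equals $M=e^{2\pi i L}$ for every $u$. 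Setting $\Phi(z,u):=Z^{(0)}(z,u)z^{-L}z^{-D}$, the cancellation $M\,z^{-L}e^{-2\pi i L}=z^{-L}$ (all factors being functions of $L$) shows that $\Phi$ is single-valued in $z$, hence holomorphic and single-valued on $(\mathbb{C}\setminus\{0\})\times\mathbb{D}(u^0)$.

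The heart of the argument is to show that $\Phi$ is in fact holomorphic across $z=0$, uniformly in $u$, with $\Phi(0,u)=I$. Because the reduced $z$-equation has a simple pole at $z=0$, it is regular singular and $Z^{(0)}$ has moderate growth there, so the single-valued $\Phi$ has at worst a pole: $\Phi(z,u)=\sum_{k\geq k_0}c_k(u)z^k$ with $c_k$ holomorphic in $u$ and only finitely many nonzero negative terms. Since $D,L$ are constant, the $u_j$-component of the Pfaffian system yields the clean relation $\partial_{u_j}\Phi=\tilde\omega_j\Phi$. As $\tilde\omega_j=\sum_{l\geq 1}\tilde\omega_{j,l}(u)z^l$ vanishes at $z=0$, comparing the coefficients of the lowest power $z^{k_0}$ on the two sides forces $\partial_{u_j}c_{k_0}=0$ for all $j$, so $c_{k_0}(u)\equiv c_{k_0}(u^0)$. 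At $u^0$ we have $\Phi(z,u^0)=\Phi_0(z)=I+O(z)$, whose lowest nonzero coefficient is $c_0(u^0)=I$; hence $k_0=0$ and $c_0(u)\equiv I$, i.e.\ $\Phi=I+\sum_{j\geq 1}\Psi_j(u)z^j$ with $\Psi_j:=c_j$ holomorphic on $\mathbb{D}(u^0)$. Finally, since the only finite-plane singularity of the reduced $z$-system is $z=0$, the single-valued $\Phi$ is holomorphic on $\mathbb{C}\setminus\{0\}$ and now also at $0$, hence entire in $z$ and jointly holomorphic on $\mathbb{C}\times\mathbb{D}(u^0)$; multiplying by $G(u)$ gives the three assertions.

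I expect the main obstacle to be exactly the step of eliminating the pole at $z=0$ uniformly in $u$ while keeping $D,L$ constant: the possibility of integer differences among the eigenvalues of $A$ (resonances) is what makes a naive coefficient recursion ambiguous, and it is sidestepped here by importing the constant $L$ from the constant monodromy and by exploiting $\tilde\omega_j(0,u)=0$ in the Laurent comparison. A secondary point needing care is the a~priori finiteness of the pole order, which I would justify through the regular-singular (moderate growth) nature of the $z$-component together with the single-valuedness of $\Phi$.
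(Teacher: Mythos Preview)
Your argument is correct and follows a genuinely different route from the paper. The paper invokes Theorem~5 of \cite{YT} as a black box: a holomorphic gauge $Y=V(z,u)\widetilde{Y}$ with $V(z,u^0)=I$ reduces the full Pfaffian system to the constant-in-$u$ system $\dd\widetilde{Y}=[\omega_0(z,u^0)\dd z]\widetilde{Y}$, so that a Levelt solution $W(z)z^Dz^L$ at the base point immediately yields $Y^{(0)}(z,u)=V(z,u)W(z)z^Dz^L$. You instead use only the $z$-independent gauge $G(u)$ of Proposition~\ref{11marzo2018-7} to freeze the residue, propagate a Levelt solution from $u^0$ via the Pfaffian system, and then verify that the propagated solution keeps Levelt form by exploiting $\tilde\omega_j(0,u)=0$ in the Laurent recursion $\partial_{u_j}c_k=\sum_{l\geq 1}\tilde\omega_{j,l}c_{k-l}$. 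What you gain is self-containedness: you bypass the Yoshida--Takano reduction entirely and make the mechanism (vanishing of $\tilde\omega_j$ at $z=0$) explicit; what the paper's route gains is brevity, since once \cite{YT} is accepted the proof is essentially two lines. On your flagged ``secondary point'': the uniform lower bound $k_0$ on the Laurent expansion of $\Phi$ does need one more sentence. Since after your gauge the residue is the \emph{constant} Jordan form $J$, the moderate-growth exponent for $Z^{(0)}$ and for $z^{-L}z^{-D}$ is controlled by the eigenvalues of $J$ alone, hence is uniform in $u$ on compacta; alternatively, if some $c_m$ with $m<0$ were not identically zero, a Baire argument on the holomorphic functions $c_k(u)$ would produce a point $u^*$ at which infinitely many negative $c_k(u^*)\neq 0$, contradicting the pole established pointwise.
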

 Notice that Proposition~\ref{30luglio 2018-3} also holds in case $\mathbb{D}\big(u^0\big)$ intersects $\bigcup_{i\neq j }^n \{u_i-u_j=0\}$, provide that $A(u)$ and $\omega_1,\dots,\omega_n$ are holomorphic of $(z,u)\in \mathbb{C}\times \mathbb{D}\big(u^0\big)$ (see Corollary~\ref{31luglio2018-1}).

 \begin{proof} The Pfaffian system \eqref{11marzo2018-2} is of Fuchsian type at $z=0$. Thus, we can apply the result of \cite{Bolibruch0,YT}. In particular, by theorem 5 of \cite{YT}, there exists a gauge transformation
 \begin{gather} \label{31luglio2018-3}
 Y=V(z,u) \widetilde{Y},
\end{gather}
 holomorphic in $\mathbb{C}\times \mathbb{D}\big(u^0\big)$ and represented by a convergent matrix valued series
 \begin{gather*}
 V(z,u)=I+\sum_{k\colon k_1+\cdots +k_n>0} V_k z^{k_0}\big(u-u_1^0\big)^{k_1}\cdots \big(u-u_n^0\big)^{k_n}
 \end{gather*}
 (here $k=(k_0,k_1,\dots,k_n)$ and $V_k$ are matrices), such that
 \begin{gather} \label{31luglio2018-4}
 \dd\widetilde{Y}=\big[ \omega_0\big(z,u^0\big)\dd z\big] \widetilde{Y} \equiv \left[ \left(\frac{A\big(u^0\big)}{z}+\Lambda\big(u^0\big)\right)\dd z\right] \widetilde{Y}.
 \end{gather}
 The above admits fundamental solutions in Levelt form
 \begin{gather} \label{31luglio2018-5}
 \widetilde{Y}^{(0)}(z)=W(z) z^D z^L,\qquad W(z)=W_0 +\sum_{\ell=1}^\infty W_\ell z^\ell, \qquad \det(W_0)\neq 0,
 \end{gather}
which are independent of $u$ ($u^0$ being fixed). Here, $W(z)$ converges and defines a holomorphic function on $\mathbb{C}$. This means that
 \begin{gather*}
 Y^{(0)}(z,u):=V(z,u)W(z) \widetilde{Y}^{(0)}(z)
 \end{gather*}
is a fundamental matrix solution of~\eqref{11marzo2018-2} in Levelt form, with constant exponents~$L$ and~$D$. Moreover, $V(z,u)W(z) $ is holomorphic on $\mathbb{C}\times \mathbb{D}\big(u^0\big)$ and its series representation can be rearranged as follows
 \begin{gather*}
 G(u) \left(I+\sum_{j=1}^\infty \Psi_j(u)z^j\right), \qquad G(u)= V(0,u)W_0.
 \end{gather*}
 Notice that $G(u)$ puts $A(u)$ in Jordan form. The proof above can be also deduced by directly applying Theorems~4 and~7 of~\cite{YT}.
 \end{proof}

Chosen a holomorphic isomonodromic family $Y(z,u)$ of \eqref{11marzo2018-2}, with holomorphic $\omega_1,\dots,\omega_n$ at $z=0$, then there exists a connection matrix $H$ such that
\begin{gather*} %\label{14marzo2018-2}
 Y(z,u)=Y^{(0)}(z,u)H.
 \end{gather*}
By Proposition \ref{30luglio 2018-3}, also $Y^{(0)}$ is an isomonodromic family of fundamental solutions, so that $H$ cannot depend on $u$. Indeed
\begin{align*}
\omega =\dd Y Y^{-1}&= \dd Y^{(0)} \big(Y^{(0)}\big)^{-1}+ Y^{(0)} \dd H H^{-1} \big(Y^{(0)}\big)^{-1}
\\
&= \omega +Y^{(0)} \dd H H^{-1} \big(Y^{(0)}\big)^{-1}\quad \Longleftrightarrow \quad \dd H=0.
\end{align*}
Without loss of generality, we can always choose \begin{gather*}Y(z,u)=Y^{(0)}(z,u).\end{gather*}

\begin{Remark} \label{18marzo2018-2} For the sake of computations, we can write $L=\Sigma+N$, where $\Sigma=\Sigma_1\oplus \cdots \oplus \Sigma_\ell$ is diagonal, with $\Sigma_q=\sigma_q I_{\dim (\Sigma_q)}$ (here $I_{\dim (\Sigma_q)}$ stands for the identity matrix of dimension $\dim (\Sigma_q)$), while $N=N_1\oplus \cdots \oplus N_\ell$ is nilpotent. Since~$\Sigma_q$ and~$N_q$ have the same dimension ($q=1,\dots,\ell$), we have
 \begin{gather*}
 [\Sigma,N]=0,
 \end{gather*}
 and thus
 \begin{gather*}
 z^D z^L=z^D z^\Sigma z^N = z^{D+\Sigma} z^N =z^{D+\Sigma}\sum_{\ell=0}^{\overline{k}} \frac{N^\ell}{\ell!} (\log z)^\ell,
 \end{gather*}
 for some finite $\overline{k}$.
 \end{Remark}

The differential system \eqref{11marzo2018-1} also admits a family of fundamental matrices $Y_r(z,u)$, $r\in \mathbb{Z}$, uniquely determined by their asymptotic behaviour in suitable sectors of central angular opening greater that~$\pi$. We fix a half-line of direction $\arg z=\tau$ in $\mathcal{R}$ which does not coincide with any of the Stokes rays associated with $\Lambda\big(u^0\big)$, namely with the half lines in~$\mathcal{R}$ specified by
\begin{gather*}
 \operatorname{Re} \big(z\big(u_i^0-u_j^0\big)\big)=0,\qquad \operatorname{Im} \big(z\big(u_i^0-u_j^0\big)\big)<0.
\end{gather*}
Notice that in order to obtain half-lines, we need to specify the sign the imaginary part. We say that $\tau$ is an {\it admissible direction at $u^0$}. Notice that we are working in the universal covering~$\mathcal{R}$, so that any $\tau+h \pi$, $h\in \mathbb{Z}$ is also an admissible direction. We define the Stokes rays associated with~$\Lambda(u)$ to be the infinitely many half-lines in $\mathcal{R}$ defined by
\begin{gather*}
\operatorname{Re}(z(u_i-u_j))=0,\qquad \operatorname{Im}(z(u_i-u_j))<0.\end{gather*}
These rays rotate as $u$ varies in $\mathbb{D}\big(u^0\big)$, and may cross an admissible direction. Let $\mathbb{D}^\prime\subset \mathbb{D}\big(u^0\big)$ be a disk sufficiently small so that no Stokes rays cross a direction $\tau+h \pi$ as $u$ varies in the closure of $\mathbb{D}^\prime$. Then, for $u\in \overline{\mathbb{D}^\prime}$, we construct a sector $\mathcal{S}_{1}(u)$, by considering the ``half plane'' $\tau-\pi<\arg z <\tau$ and extending it to the nearest Stokes rays associated with $\Lambda(u)$ lying outside the half plane. Analogously, for any $r\in \mathbb{Z}$ a sector $\mathcal{S}_{r}(u)$ is obtained extending the ``half plane'' $\tau+(r-2)\pi<\arg z <\tau+(r-1)\pi$ to the nearest Stokes rays outside of it. The sectors $\mathcal{S}_r(u)$ have central angular opening greater than $\pi$, and the same holds for
\begin{gather*}%\label{22marzo2018-5}
 \mathcal{S}_r(\overline{ \mathbb{D}^\prime}):=\bigcap_{u\in\overline{ \mathbb{D}^\prime} } \mathcal{S}_r(u).
 \end{gather*}

Since $u_i\neq u_j$ for $i\neq j$, it is well known that the system \eqref{11marzo2018-1} has a {\it formal fundamental matrix solution}
\begin{gather*}
Y_F(z,u)=F(z,u) z^{B(u)}e^{z\Lambda},
\end{gather*}
where \begin{gather*}B(u):=\operatorname{diag}\big(A_{11}(u),\dots,A_{nn}(u)\big)\end{gather*} is a diagonal matrix, and $F(z,u)$ is the formal matrix-valued expansion
\begin{gather}\label{14marzo2018-1}
F(z,u)=I+\sum_{k=1}^\infty F_k(u)z^{-k},
\end{gather}
with holomorphic on $\mathbb{D}\big(u^0\big)$ coefficients $F_k(u)$ uniquely determined by \eqref{11marzo2018-1}. Moreover, for each fixed $u$, and for any sector $S$ containing in its interior {\it a set of basic Stokes rays}\footnote{Let $\mu$ be an integer. We say that a finite sequence of Stokes rays
\begin{gather*}
\arg z=\tau_1,\quad \arg z=\tau_2, \quad \dots ,\quad \arg z=\tau_\mu, \quad\tau_1<\tau_2<\cdots <\tau_\mu,\quad (\tau_\mu-\tau_1)<\pi,
\end{gather*}
form a {\it set of basic Stokes rays} if all the other Stokes rays can be obtained as
\begin{gather*}
\arg z= \tau_\nu +k\pi, \qquad \text{for some $\nu\in \{1,2,\dots,\mu\}$ and $k\in\mathbb{Z}$}.
\end{gather*}
There exist a set of basic rays (and then infinitely many). Indeed, a sector with angular opening $\pi$, whose boundary rays are not Stokes rays, contains exactly a set of basic rays.} and no other Stokes rays, it is well known \cite{BJL1} that there exists a unique fundamental matrix solution
\begin{gather*}
Y_{(S)}(z,u)=\widehat{Y}_{(S)}(z,u) z^{B(u)}e^{z\Lambda}
\end{gather*}
such that
\begin{gather*}
\widehat{Y}_{(S)}(z,u)\sim F(z,u),\qquad z\to \infty \ \ \text{in} \ \ S.
\end{gather*}
Now, if $u$ is restricted to $\mathbb{D}^\prime$, then we can take a family of such solutions, labelled by $r\in \mathbb{Z}$, with $S=\mathcal{S}_r(u)$,
\begin{gather}\label{11marzo2018-10-BIS}
Y_r(z,u)=\widehat{Y}_r(z,u) z^{B(u)}e^{z\Lambda},\\
\label{11marzo2018-10} \widehat{Y}_r(z,u)\sim F(z,u),\qquad z\to \infty \ \ \text{in} \ \ \mathcal{S}_r(u).
\end{gather}
It is a fundamental result of Sibuya's \cite{Sh11,Sh4,Sh2} that each $Y_r(z,u)$ depends holomorphically on $u\in \mathbb{D}^\prime$, and the asymptotics \eqref{11marzo2018-10} is uniform for $z\to \infty$ in $\mathcal{S}_r(\overline{ \mathbb{D}^\prime})$ with respect to~$u$ varying in~$\mathbb{D}^\prime$. We notice that it may be necessary to further restrict~$\mathbb{D}^\prime$, because Sibuya's result requires~$\mathbb{D}^\prime$ to be sufficiently small in order for the holomorphic dependence to occur. The {\it Stokes matrices}~$\mathbb{S}_r$ defined by
 \begin{gather*}
 Y_{r+1}(z,u)=Y_r(z,u)\mathbb{S}_r(u),
 \end{gather*}
 are holomorphic on $\mathbb{D}^\prime$. We recall that the structure of the Stokes matrices is such that \smash{$\operatorname{diag}( \mathbb{S}_r)=I$}, and
 \begin{gather*}
 (\mathbb{S}_r)_{ij}=0 \quad \text{for $i$, $j$ such that } e^{(u_i-u_j)z}\to \infty, \qquad z\in \mathcal{S}_r(\overline{ \mathbb{D}^\prime}) \cap \mathcal{S}_{r+1}(\overline{ \mathbb{D}^\prime}).
 \end{gather*}
 This is a ``triangular structure''. Successive matrices $\mathbb{S}_r$ and $\mathbb{S}_{r+1}$ have opposite ``triangular'' structures.

Given the holomorphic isomonodromic family $Y(z,u)$, there will exist holomorphic connection matrices $H_r(u)$, $u\in\mathbb{D}^\prime$, such that
\begin{gather}\label{14marzo2018-4}
 Y(z,u)=Y_r(z,u) H_r(u), \qquad u\in\mathbb{D}^\prime.
 \end{gather}

Let $E_j$ be the matrix with the only non-zero entry being $(E_j)_{jj}=1$. Notice that for distinct eigenvalues we have $\partial_j\Lambda=E_j$. The following proposition holds.

\begin{Proposition}\label{16marzo2018-1}Consider a completely integrable linear Pfaffian system
\begin{gather}\label{29marzo2018-1}
\dd Y=\omega Y,\qquad \omega=\left(\Lambda+\frac{A(u)}{z}\right)\dd z+\sum_{j=1}^n \omega_j(z,u) \dd u_j,
\end{gather}
with $A(u)$ holomorphic on $\mathbb{D}\big(u^0\big)$ and $\omega_1(z,u), \dots,\omega_n(z,u)$ holomorphic on $ \mathbb{C}\times \mathbb{D}\big(u^0\big)$. Let $z=\infty$ be at most a pole of $\omega_1,\dots,\omega_n$. Then:
\begin{itemize}\itemsep=0pt
\item[$(A)$] each $\omega_j(z,u)$ is linear in $z$ and determined by $A(u)$ up to an arbitrary holomorphic diagonal matrix $\mathcal{D}_j(u)$ on $\mathbb{D}\big(u^0\big)$, with the following structure\footnote{Explicitly, \begin{gather*}\left(
 \frac{A_{ab}(\delta_{aj}-\delta_{bj})}{u_a-u_b}
 \right)_{a,b=1}^n
 =\left(
\begin{matrix}
0 & 0&\dfrac{ -A_{1j}}{u_1-u_j} & 0& 0
 \\
0 &0 &\vdots & 0& 0
 \\
 \dfrac{ A_{j1}}{u_j-u_1} & \cdots&0 & \cdots&\dfrac{ A_{jn}}{u_j-u_n}
 \\
0 &0 &\vdots &0 & 0
 \\
 0 &0 &\dfrac{ -A_{nj}}{u_n-u_j} & 0& 0
\end{matrix}
\right)
\end{gather*}
}
\begin{gather*}
\omega_j(z,u)= zE_j+\omega_j(0,u),\\
\omega_j(0,u):=\left( \frac{A_{ab}(u) (\delta_{aj}-\delta_{bj})}{u_a-u_b} \right)_{a,b=1}^n+ \mathcal{D}_j(u),
 \end{gather*}
 where $ \mathcal{D}_j(u)$ is obtained by differentiating a matrix $\mathcal{D}$ on $\mathbb{D}\big(u^0\big)$, whose diagonal entries only depend on~$u$:
 \begin{gather*}
 \mathcal{D}_j(u)=\frac{\partial \mathcal{D}(u)}{\partial u_j};
 \end{gather*}

 \item[$(B)$] $A(u)$ satisfies \eqref{23marzo2018-8}
 \begin{gather*}
 \frac{\partial A}{\partial u_j}=[\omega_j(0,u),A],\qquad j=1,\dots,n;
 \end{gather*}

 \item[$(C)$] the above non-linear system is Frobenius integrable;

 \item[$(D)$] the diagonal of $A$, namley the matrix $B=\operatorname{diag}(A_{11},\dots,A_{nn})$ in \eqref{11marzo2018-10-BIS}, is constant;

\item[$(E)$] given the relations \eqref{14marzo2018-4} for a fundamental matrix $Y(z,u)$ holomorphic on $\mathcal{R}\times \mathbb{D}\big(u^0\big)$, then each $H_r$ satisfies\footnote{We have
 $H_r(u)=\exp\{\operatorname{diag}({\mathcal{D}(u)})\} H_r^0$, with $H_r^0$ a constant invertible matrix.}
\begin{gather*}
\frac{\partial H_r}{\partial u_j}=\mathcal{D}_j(u)H_r\qquad \forall\, r\in\mathbb{Z};
\end{gather*}

\item[$(F)$] from $(E)$, it follows that $H_r(u)$ is holomorphic on $\mathbb{D}\big(u^0\big)$, and $Y_r(z,u)$ extends holomorphically on $\mathcal{R}\times \mathbb{D}\big(u^0\big)$.
\end{itemize}
\end{Proposition}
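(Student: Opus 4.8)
The plan is to run everything off the integrability condition \eqref{11marzo2018-3} with $\beta=0$ and $\alpha=j$, that is \eqref{24marzo2018-1}, expanded in powers of $z$. Since each $\omega_j(z,u)$ is holomorphic in $z$ on $\mathbb{C}$ and has at most a pole at $z=\infty$, it is a polynomial, and I write $\omega_j=\sum_{k\ge 0}\omega_j^{(k)}(u)z^{k}$. With this in hand, parts $(B)$ and $(C)$ are already delivered by Lemma~\ref{24marzo2018-2}: the coefficient of $z^{-1}$ in \eqref{24marzo2018-1} is exactly \eqref{23marzo2018-8}, and the Frobenius integrability of that nonlinear system is \eqref{24marzo2018-3}, i.e.\ the $i,j\ge 1$ instance of \eqref{11marzo2018-3}. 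So the substantive steps are $(A)$, $(D)$, $(E)$, $(F)$.

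For $(A)$ I would argue by the degree $d=\deg_z\omega_j$. The coefficient of $z^{d}$ in \eqref{24marzo2018-1} gives $[\Lambda,\omega_j^{(d)}]=0$, so $\omega_j^{(d)}$ is diagonal because $\Lambda$ has distinct diagonal entries. If $d\ge 2$, the coefficient of $z^{d-1}$ reads $[\Lambda,\omega_j^{(d-1)}]+[A,\omega_j^{(d)}]=d\,\omega_j^{(d)}$; taking diagonal parts kills both commutators (the second because $\omega_j^{(d)}$ is diagonal), forcing $\omega_j^{(d)}=0$, a contradiction. Hence $d\le 1$. The coefficient of $z^{1}$ shows $\omega_j^{(1)}$ is diagonal, and the diagonal of the $z^{0}$ coefficient then gives $\omega_j^{(1)}=E_j$; the off-diagonal of that same $z^{0}$ coefficient is $[\Lambda,\omega_j(0,u)]=[E_j,A]$, whose unique off-diagonal solution is the explicit matrix $\big(A_{ab}(\delta_{aj}-\delta_{bj})/(u_a-u_b)\big)_{a,b}$, while the diagonal of $\omega_j(0,u)$ is undetermined and is named $\mathcal{D}_j(u)$. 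To produce the potential $\mathcal{D}$ with $\mathcal{D}_j=\partial_j\mathcal{D}$, I would read off the diagonal part of \eqref{24marzo2018-3}: there $\partial_j\mathcal{D}_i-\partial_i\mathcal{D}_j$ equals the diagonal part of $[\omega_j(0,u),\omega_i(0,u)]$, and I would check this vanishes, since $[\mathcal{D}_j,\mathcal{D}_i]=0$ and the mixed commutators $[M_j,\mathcal{D}_i]$, $[\mathcal{D}_j,M_i]$ (with $M_i,M_j$ the explicit off-diagonal pieces) are purely off-diagonal, so only $[M_j,M_i]$ survives on the diagonal, whose $(a,a)$ entry is a sum over $c$ that cancels in antisymmetric $\delta$-pairs. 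Then $\partial_j\mathcal{D}_i=\partial_i\mathcal{D}_j$, and the Poincar\'e lemma on the polydisc yields $\mathcal{D}$.

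Part $(D)$ is an immediate consequence of $(B)$: on the diagonal of \eqref{23marzo2018-8} the summand $\mathcal{D}_j$ drops out and the off-diagonal part $M_j$ contributes $\sum_{c}\big((M_j)_{ac}A_{ca}-A_{ac}(M_j)_{ca}\big)$, which vanishes after inserting the explicit entries of $M_j$; hence $\partial_jA_{aa}=0$ and $B$ is constant. Part $(E)$ is the heart of the matter. From $Y=Y_rH_r$ in \eqref{14marzo2018-4} together with $\partial_jY=\omega_jY$ one obtains $\omega_j-\partial_jY_r\,Y_r^{-1}=Y_r\big(\partial_jH_r\,H_r^{-1}\big)Y_r^{-1}$. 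I would evaluate the left side from the normal form $Y_r=\widehat{Y}_r z^{B}e^{z\Lambda}$: since $B$ is constant by $(D)$ and $E_j,\Lambda,z^{B}$ are all diagonal, $\partial_jY_r\,Y_r^{-1}=(\partial_j\widehat{Y}_r)\widehat{Y}_r^{-1}+z\,\widehat{Y}_rE_j\widehat{Y}_r^{-1}$, which by the asymptotics \eqref{11marzo2018-10}--\eqref{14marzo2018-1} equals $zE_j+[F_1,E_j]+O(1/z)$. Consequently $Y_r\big(\partial_jH_r\,H_r^{-1}\big)Y_r^{-1}=\omega_j(0,u)-[F_1,E_j]+O(1/z)$ remains bounded as $z\to\infty$ in $\mathcal{S}_r(\overline{\mathbb{D}'})$.

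The hard part will be to turn this boundedness into the precise claim $\partial_jH_r\,H_r^{-1}=\mathcal{D}_j$. Writing $C_j:=\partial_jH_r\,H_r^{-1}$, the $(a,b)$ entry of $Y_rC_jY_r^{-1}$ grows like $(C_j)_{ab}\,z^{A_{aa}-A_{bb}}e^{z(u_a-u_b)}$; because $\mathcal{S}_r(\overline{\mathbb{D}'})$ has opening greater than $\pi$, for $a\ne b$ the exponential $e^{z(u_a-u_b)}$ is unbounded somewhere in the sector, so boundedness forces $(C_j)_{ab}=0$ and $C_j$ is diagonal. Letting $z\to\infty$ then gives $C_j=\omega_j(0,u)-[F_1,E_j]$, and a direct computation using the formal recursion $[\Lambda,F_1]=B-A$ for \eqref{11marzo2018-1} shows that the off-diagonal of $[F_1,E_j]$ is exactly $M_j$ and its diagonal is zero, whence $\omega_j(0,u)-[F_1,E_j]=\mathcal{D}_j$, proving $(E)$. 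Finally, for $(F)$, since $\mathcal{D}_j=\partial_j\mathcal{D}$ with $\mathcal{D}$ diagonal and holomorphic on all of $\mathbb{D}\big(u^0\big)$ and the $\mathcal{D}_j$ mutually commute, the system $\partial_jH_r=\mathcal{D}_jH_r$ integrates to $H_r=\exp\{\operatorname{diag}(\mathcal{D}(u))\}H_r^0$ with $H_r^0$ constant; this is holomorphic on $\mathbb{D}\big(u^0\big)$, and then $Y_r=YH_r^{-1}$ inherits holomorphy on $\mathcal{R}\times\mathbb{D}\big(u^0\big)$, extending $Y_r$ past the small disk $\mathbb{D}'$ on which it was built.
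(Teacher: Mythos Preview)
Your argument is correct, and for parts $(A)$--$(D)$ it is organized differently from the paper's. The paper proceeds in two lemmas in Appendix~B: first (Lemma~\ref{16marzo2018-2}) it computes $\omega_j=\dd Y\,Y^{-1}$ from the Levelt solution at $z=0$ and from $Y=Y_rH_r$ at $z=\infty$, then invokes Liouville's theorem to conclude $\omega_j=zE_j+[F_1,E_j]+\mathcal{D}_j$, picking up $\dd B=0$ along the way from the absence of a $\log z$ term; only afterwards (Lemma~\ref{16marzo2018-3}) does it feed this linear-in-$z$ structure back into the integrability condition to extract \eqref{15marzo2018-1}--\eqref{15marzo2018-3}, the closedness of $\sum_j\mathcal{D}_j\,\dd u_j$, and $\dd B=0$ again. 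You instead observe at the outset that ``entire in $z$ with a pole at $\infty$'' forces $\omega_j$ to be a polynomial, and then bound its degree and pin down its coefficients purely from matching powers of $z$ in \eqref{24marzo2018-1}; this yields $(A)$--$(D)$ on all of $\mathbb{D}(u^0)$ without touching any fundamental solution and without the intermediate restriction to $\mathbb{D}'$. Your route is more elementary and self-contained for those parts; the paper's route is more solution-theoretic and ties $(A)$, $(D)$, $(E)$ together in one stroke via Liouville. For $(E)$ and $(F)$ the two approaches essentially coincide: both conjugate $\partial_jH_rH_r^{-1}$ by $Y_r$ and use the opening $>\pi$ of the sector to force diagonality, then integrate. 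One small phrasing point: when you write that the $(a,b)$ entry of $Y_rC_jY_r^{-1}$ grows like $(C_j)_{ab}z^{A_{aa}-A_{bb}}e^{z(u_a-u_b)}$, what you really mean is that $z^Be^{z\Lambda}C_je^{-z\Lambda}z^{-B}$ has that $(a,b)$ entry, and conjugation by $\widehat{Y}_r\sim I$ cannot cancel exponential growth; the conclusion is unchanged.
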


We prove the proposition in Appendix~\ref{appendixB}. We notice that in the above proposition we can choose an isomonodromic family $Y(z,u)=Y^{(0)}(z,u)$, given by a Levelt form~\eqref{11marzo2018-8} satisfying Proposition~\ref{30luglio 2018-3}. We also remark that by substitution of \eqref{11marzo2018-10-BIS} and \eqref{14marzo2018-1} into~\eqref{11marzo2018-1}, we find
\begin{gather}\label{18marzo2018-8}
 (F_1)_{ij}=\frac{A_{ij}}{u_j-u_i}, \qquad 1\leq i\neq j\leq n, \qquad (F_1)_{ii}=-\sum_{j\neq i}A_{ij}(F_1)_{ji}.
\end{gather}
Therefore
\begin{gather}\label{29marzo2018-2}
\omega_j(z,u)=zE_j+[F_1(u),E_j]+\mathcal{D}_j(u).
\end{gather}

\begin{Remark}In this paper, we have assumed that each $\omega_j(z,u)$ is holomorphic at $z=0$. More generally, one may study a Pfaffian system with Laurent expansions at $z=0$
 \begin{gather*}
 \omega_j(z,u)= \sum_{m=1}^{p_j}\frac{\omega_j^{(-m)}(u)}{z^m}+\sum_{m=0}^\infty \omega_j^{(m)}(u)z^m.
 \end{gather*}
 Here, we only remark that if $A(u)$ is non-resonant for each $u$ in the domain of interest (it means that the difference of two eigenvalues of~$A(u)$ cannot be a non-zero integer), then the Frobenius integrability conditions imply that $\omega_j(z,u)$ is holomorphic at $z=0$, namely
 \begin{gather*}
 \omega_j^{(-p_j)}(u)=\omega_j^{(-p_j+1)}(u)=\cdots =\omega_j^{(-1)}(u)=0.
 \end{gather*}
 To see this, substitute the Laurent expansion into~\eqref{24marzo2018-1}. We find the following recurrence relations:

From the negative powers of $z$:
\begin{gather}
 (A+p_j)\omega_j^{(-p_j)}-\omega_j^{(-p_j)}A=0; \label{eq(1)} \\
(A+m)\omega^{(-m)}-\omega^{(-m)}A=\big[\omega^{(-m-1)},\Lambda\big],\qquad \text{for $m= p_j-1,p_j-2,\dots,1$}; \label{eq(2)}\\
 \frac{\partial A}{\partial u_j}=\big[\omega^{(0)},A\big]+\big[\omega^{(-1)},\Lambda\big]. \label{eq(3)}
\end{gather}

 From the power $z^0$:
\begin{gather} A\omega_j^{(1)}-\omega_j^{(1)}A-\omega_j^{(1)}+E_j=\big[\omega_j^{(0)},\Lambda\big].\label{eq(4)}
\end{gather}

From the positive powers $z$:
\begin{gather}
(A-m-1)\omega_j^{(m+1)}-\omega_j^{(m+1)}A=\big[\omega_j^{(m)},\Lambda\big], \qquad m\geq 1 . \label{eq(5)}
 \end{gather}
 If $A$ is non-resonant, the Sylvester equation~\eqref{eq(1)} determines $\omega_j^{(-p_j)}=0$, thus the Sylvester equations~\eqref{eq(2)} yield $\omega_j^{(-m)}=0$, $ m= p_j-1,p_j-2,\dots,1$. Moreover, the equations~\eqref{eq(5)} determine~$\omega_j^{(m+1)}$ for $m\geq 1$ in terms of $\omega_j^{(1)}$. In particular, each $\omega_j^{(m+1)}=0 $ if $\omega_j^{(1)}$ is diagonal.

 Finally, we notice that, independently of the resonance properties of $A$, equation~\eqref{eq(4)} determines the off-diagonal entries of $\omega_j^{(0)}$ in terms of $\omega_j^{(1)}$, to be \begin{gather*}\big(\omega_j^{(0)}\big)_{ab}=(u_a-u_b)^{-1}\big(\big[\omega_j^{(1)},A\big]+\omega_j^{(1)}\big)_{ab}.\end{gather*} Moreover, the diagonal part of~\eqref{eq(4)} yields \begin{gather*}\big(\omega_j^{(1)}\big)_{aa}=\delta_{ja}-\sum_{b\neq a}\big(\big(\omega_j^{(1)}\big)_{ab}A_{ba}-A_{ab}\big(\omega_j^{(1)}\big)_{ba}\big).\end{gather*} Under the assumptions of Proposition~\ref{16marzo2018-1}, we find that $\omega_j^{(1)}=E_j$ and $\omega_j^{(m+1)}=0$ for $m\geq 1$.
 \end{Remark}

\section{Strong isomonodromic deformations}\label{19marzo2018-2}

In this section, we define strong isomonodromy deformations of the differential system~\eqref{11marzo2018-1}, which preserve a set of monodromy data, and we characterise the Pfaffian system responsible for them and its fundamental matrix solutions. The assumption here is that $\Lambda$ has distinct eigenvalues, namely we work on the domain $\mathbb{D}\big(u^0\big)$ previously introduced. In the next section, we will drop this assumption.

The notion of isomonodromic deformations given by Jimbo, Miwa and Ueno in~\cite{JMU} is stronger than the one defined in the previous section. It requires that a set of essential monodromy data, {\it not just} the monodromy matrices, are constant. In the standard theory of \cite{JMU}, the mat\-rix residue~$A(u)$ at a Fuchsian singularity must be non-resonant and reducible to a diagonal form with distinct eigenvalues, namely the Levelt form~\eqref{11marzo2018-8} is assumed to be $Y^{(0)}(z,u)=\widehat{Y}^{(0)}(z,u) z^{L_0(u)}$, where $L_0$ is a diagonal matrix with distinct eigenvalues (not differing by integers). Here, we do not assume this restriction, so introducing a first non-generic feature.

\begin{Definition}\label{18marzo2018-3}Let $Y(z,u)$ be a weak isomonodromic family of fundamental matrix solutions of the Pfaffian system \eqref{11marzo2018-2}, or equivalenlty \eqref{29marzo2018-1}, with coefficients \eqref{29marzo2018-2}. We call \eqref{11marzo2018-1} a~strong isomonodromic family of differential systems, and~$Y(z,u)$ a~strong isomonodromic family of fundamental solutions over $\mathbb{D}\big(u^0\big)$, if for all $r\in \mathbb{Z}$ the connection matrices~$H_r$ in~\eqref{14marzo2018-4} are independent of~$u$, namely
\begin{gather*} \dd H_r=0.
\end{gather*}
\end{Definition}

Recall that, by Proposition \ref{30luglio 2018-3}, a fundamental matrix $Y^{(0)}(z,u)$ in Levelt form \eqref{11marzo2018-8} satisfies the Pfaffian system. We have the following characterisation.
\begin{Proposition}\label{18marzo2018-4}
The deformation is strongly isomonodromic if and only if the fundamental matrices $Y_r(z,u)$ of the differential system \eqref{11marzo2018-1} also satisfy the Pfaffian system~\eqref{29marzo2018-1}
\begin{gather*}
\dd Y_r=\omega Y_r, \qquad \forall \, r\in\mathbb{Z},\end{gather*}
 being the coefficients
 \begin{gather}\label{18marzo2018-11}
 \omega_j(z,u)= zE_j+[F_1,E_j] \equiv zE_j+\left( \frac{A_{ab}(u)(\delta_{aj}-\delta_{bj})}{u_a-u_b} \right)_{a,b=1}^n.
\end{gather}
 \end{Proposition}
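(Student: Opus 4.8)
The plan is to read everything off the connection relation \eqref{14marzo2018-4}, $Y(z,u)=Y_r(z,u)H_r(u)$, together with the structural information already collected in Proposition~\ref{16marzo2018-1}. Here $Y(z,u)$ is the weak isomonodromic family solving the Pfaffian system \eqref{29marzo2018-1}, so that $\dd Y=\omega Y$, whereas the $Y_r(z,u)$ are the canonical sectorial solutions of the $z$-equation \eqref{11marzo2018-1} alone. Both families satisfy the $z$-component $\partial_z(\cdot)=\omega_0(\cdot)$ automatically, so the entire content of the assertion ``$Y_r$ satisfies the Pfaffian system'' resides in the deformation equations $\partial_{u_j}Y_r=\omega_j Y_r$, and it is only these that I would test.

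First I would differentiate \eqref{14marzo2018-4} in $u_j$ and insert $\partial_{u_j}Y=\omega_j Y=\omega_j Y_r H_r$, which gives
\begin{gather*}
\omega_j Y_r H_r=(\partial_{u_j}Y_r)H_r+Y_r\,\partial_{u_j}H_r .
\end{gather*}
Right-multiplying by $H_r^{-1}$ yields the key identity
\begin{gather*}
\partial_{u_j}Y_r=\omega_j Y_r-Y_r\big(\partial_{u_j}H_r\big)H_r^{-1}.
\end{gather*}
At this point I would invoke Proposition~\ref{16marzo2018-1}$(E)$, namely $\partial_{u_j}H_r=\mathcal{D}_j(u)H_r$, so that $\big(\partial_{u_j}H_r\big)H_r^{-1}=\mathcal{D}_j$ and the identity reduces to $\partial_{u_j}Y_r=\omega_j Y_r-Y_r\mathcal{D}_j$. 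Since $Y_r$ is invertible, the correction term $Y_r\mathcal{D}_j$ vanishes for all $j$ if and only if every $\mathcal{D}_j=0$; hence $Y_r$ solves the same Pfaffian system \eqref{29marzo2018-1} as $Y$ precisely when $\mathcal{D}_j\equiv 0$ for $j=1,\dots,n$.

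It then remains to match this condition with the two remaining clauses. By \eqref{29marzo2018-2} together with the explicit form \eqref{18marzo2018-8} of $[F_1,E_j]$, the vanishing of all $\mathcal{D}_j$ is exactly the statement that $\omega_j=zE_j+[F_1,E_j]$, i.e., the coefficients take the form \eqref{18marzo2018-11}. On the other hand, $\partial_{u_j}H_r=\mathcal{D}_j H_r$ with $H_r$ invertible shows that $\dd H_r=0$ for all $r$ --- the definition of strong isomonodromy (Definition~\ref{18marzo2018-3}) --- is likewise equivalent to $\mathcal{D}_j\equiv 0$. Collecting the three equivalences, strong isomonodromy, the coefficient form \eqref{18marzo2018-11}, and the solvability of \eqref{29marzo2018-1} by the $Y_r$ are mutually equivalent, which is the assertion.

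The computation is elementary; the only delicate point is the side on which the connection matrices enter the correction term. Because $\big(\partial_{u_j}H_r\big)H_r^{-1}=\mathcal{D}_j$ multiplies $Y_r$ from the right, the identity carries the genuinely inhomogeneous term $Y_r\mathcal{D}_j$ rather than $\mathcal{D}_j Y_r$, and this term obstructs $Y_r$ from solving \eqref{29marzo2018-1} exactly when the $u$-dependence of the $H_r$ survives. I would therefore present this asymmetry, rather than the differentiation, as the heart of the argument.
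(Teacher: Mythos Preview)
Your argument is correct and follows essentially the same route as the paper: differentiate the connection relation $Y=Y_rH_r$, read off that $\dd Y_r=\omega Y_r$ is equivalent to $\dd H_r=0$, and then use Proposition~\ref{16marzo2018-1} (specifically $\partial_{u_j}H_r=\mathcal{D}_jH_r$) to identify this with $\mathcal{D}_j\equiv0$, hence with the coefficient form~\eqref{18marzo2018-11}. The paper packages the computation as $\omega=\dd Y\,Y^{-1}=\dd Y_r\,Y_r^{-1}+Y_r(\dd H_r\,H_r^{-1})Y_r^{-1}$ and pivots directly on $\dd H_r=0$ rather than on $\mathcal{D}_j=0$, but the content is the same.
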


\begin{proof} Obvious. We have
\begin{gather*}
\omega =\dd Y Y^{-1}=\dd Y_r Y_r^{-1}+ Y_r \dd H_r H_r^{-1}Y_r^{-1}.
\end{gather*}
Therefore, $ \dd Y_r Y_r^{-1}=\omega$ if and only if $\dd H_r=0$. Since $\dd H_r=0$, then $\mathcal{D}_j=0$ in Proposition~\ref{16marzo2018-1}, so that $\omega_j$ is~\eqref{18marzo2018-11}.
\end{proof}

 It is a standard result that $\mathbb{S}_0$ and $\mathbb{S}_1$, together with $B$, suffice to generate $\mathbb{S}_r$ for any $r\in \mathbb{Z}$, through the well known formula \cite{BJL1}
 \begin{gather} \label{18marzo2018-6}
 \mathbb{S}_{2r}=e^{-2r\pi i B} \mathbb{S}_0 e^{2r\pi i B},\qquad \mathbb{S}_{2r+1}=e^{-2r\pi i B} \mathbb{S}_1 e^{2r\pi i B}.
 \end{gather}
We introduce connection matrices $C_r$ such that
\begin{gather*}
Y_r=Y^{(0)}C_r.
\end{gather*}
$C_0$ and the Stokes matrices suffice to generate all the matrices $C_r$. Indeed, $Y_r=Y_0\mathbb{S}_0\mathbb{S}_1\cdots \mathbb{S}_{r-1}$, so that we have
 \begin{gather} \label{18marzo2018-7}
C_r=C_0\mathbb{S}_0\mathbb{S}_1\cdots \mathbb{S}_{r-1}.
\end{gather}

\begin{Definition}\label{18marzo2018-10} The matrices $\mathbb{S}_r$, $B$, $D$, $L$, $C_r$ ($r\in \mathbb{Z}$) are called the {\rm essential monodromy data} associated with the fundamental matrix solutions $Y_r(z,u)$ and $Y^{(0)}(z,u)$ of the system \eqref{11marzo2018-1}. In view of \eqref{18marzo2018-6} and \eqref{18marzo2018-7}, it suffices to consider the data
 \begin{gather*}
 \mathbb{S}_0, \ \mathbb{S}_1, \ B, \ D, \ L, \ C_0.
 \end{gather*}
 or equivalently, for some fixed value of $r$,
 \begin{gather*}
 \mathbb{S}_r, \ \mathbb{S}_{r+1}, \ B, \ D, \ L, \ C_r.
 \end{gather*}
\end{Definition}
The above definition is similar to the definition of monodromy data given in \cite{JMU}, here including the case when $A$ may be resonant and/or non-diagonalizable.

For a Pfaffian system with $\omega_1, \dots, \omega_n$ holomorphic on $\mathbb{C}\times \mathbb{D}\big(u^0\big)$, by Propositions~\ref{30luglio 2018-3} and~\ref{16marzo2018-1}, we have that
\begin{gather*}
D,\ L, \ B \quad \text{are constant on $\mathbb{D}\big(u^0\big)$}.
\end{gather*}
The following characterisation of strong isomonodromic deformations in terms of essential monodromy data holds.

\begin{Proposition}\label{18marzo2018-9}A deformation is strongly isomonodromic as in Definition~{\rm \ref{18marzo2018-3}} if and only if
\begin{gather*}
\dd \mathbb{S}_r=\dd \mathbb{S}_{r+1}=0, \qquad \dd C_r=0,
\end{gather*}
for one value of $r\in\mathbb{Z}$.
\end{Proposition}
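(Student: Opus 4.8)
The plan is to reduce the statement to relations among the connection matrices $H_r$ and then to exploit that conditions imposed at a single index $r$ propagate to all of $\mathbb{Z}$ through the generating formula \eqref{18marzo2018-6}. Choosing the isomonodromic family to be $Y=Y^{(0)}$ as allowed above, the definitions $Y=Y_rH_r$ in \eqref{14marzo2018-4}, $Y_{r+1}=Y_r\mathbb{S}_r$, and $Y_r=Y^{(0)}C_r$ give at once
\begin{gather*}
\mathbb{S}_r=H_rH_{r+1}^{-1},\qquad C_r=H_r^{-1},\qquad r\in\mathbb{Z},
\end{gather*}
since $Y_rH_r=Y_{r+1}H_{r+1}=Y_r\mathbb{S}_rH_{r+1}$ forces $H_r=\mathbb{S}_rH_{r+1}$, while $Y^{(0)}=Y_rH_r=Y^{(0)}C_rH_r$ forces $C_rH_r=I$.

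The forward implication is then routine. If the deformation is strongly isomonodromic, Definition~\ref{18marzo2018-3} gives $\dd H_s=0$ for every $s\in\mathbb{Z}$; differentiating the two identities above and using invertibility yields $\dd\mathbb{S}_s=0$ and $\dd C_s=0$ for all $s$, in particular $\dd\mathbb{S}_r=\dd\mathbb{S}_{r+1}=\dd C_r=0$.

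For the converse I would argue in two stages, and this is where the real work lies. First, $\dd C_r=0$ together with $C_r=H_r^{-1}$ gives $\dd H_r=0$ at the single index $r$. Second, I would show that \emph{all} Stokes matrices are constant: since $r$ and $r+1$ have opposite parities, the generating formula \eqref{18marzo2018-6} and the constancy of $B$ (Proposition~\ref{16marzo2018-1}) express $\mathbb{S}_0$ and $\mathbb{S}_1$ as conjugates of $\mathbb{S}_r$ and $\mathbb{S}_{r+1}$ by the constant matrices $e^{\pm 2m\pi i B}$; hence $\dd\mathbb{S}_0=\dd\mathbb{S}_1=0$, and \eqref{18marzo2018-6} then forces $\dd\mathbb{S}_s=0$ for all $s$. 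With every $\mathbb{S}_s$ constant and $\dd H_r=0$ in hand, the recursions $H_{s+1}=\mathbb{S}_s^{-1}H_s$ and $H_{s-1}=\mathbb{S}_{s-1}H_s$ propagate $\dd H_s=0$ to all $s\in\mathbb{Z}$ by induction in both directions, which is exactly the strong isomonodromy condition of Definition~\ref{18marzo2018-3}.

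The main obstacle is precisely this propagation from a single index to all of $\mathbb{Z}$; the forward direction and the identities are immediate. I would also remark that Proposition~\ref{16marzo2018-1}$(E)$ offers a shortcut: each $H_r$ solves $\partial_jH_r=\mathcal{D}_j(u)H_r$ with the \emph{same} $\mathcal{D}_j$ for every $r$, so $\dd H_r=0$ at a single index already forces $\mathcal{D}_j\equiv 0$ and hence $\dd H_s=0$ for all $s$. This shows that $\dd C_r=0$ alone drives the converse, the hypotheses on $\mathbb{S}_r,\mathbb{S}_{r+1}$ being then automatically fulfilled; I would nonetheless prefer the self-contained Stokes argument, as it explains why precisely two consecutive Stokes matrices appear in the characterization.
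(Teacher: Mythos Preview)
Your argument is correct and noticeably different from the paper's. You reduce everything to algebra in the connection matrices: with $Y=Y^{(0)}$ you identify $C_r=H_r^{-1}$ and $\mathbb{S}_r=H_rH_{r+1}^{-1}$, then use \eqref{18marzo2018-6} and the constancy of $B$ to propagate $\dd\mathbb{S}_r=\dd\mathbb{S}_{r+1}=0$ to all indices, and finally push $\dd H_r=0$ along the recursion $H_{s+1}=\mathbb{S}_s^{-1}H_s$. Your side remark via Proposition~\ref{16marzo2018-1}(E) (one $\dd H_r=0$ kills $\mathcal{D}_j$ and hence all $\dd H_s$) is also valid and sharper.

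The paper instead works analytically through Proposition~\ref{18marzo2018-4}: it characterises strong isomonodromy by $\dd Y_r=\omega Y_r$ for every $r$, then for the converse uses $\dd C_r=0$ and $\dd\mathbb{S}_s=0$ to get $\dd Y_r Y_r^{-1}=\dd Y^{(0)}(Y^{(0)})^{-1}=\omega$ for all $r$, and finally computes $\omega_j$ from the asymptotic expansion of $Y_r$ at $z=\infty$ together with Liouville's theorem to conclude $\mathcal{D}_j=0$. Your route avoids the asymptotic calculation and Liouville entirely and is more self-contained; the paper's route has the merit of simultaneously pinning down the explicit shape \eqref{18marzo2018-11} of $\omega_j$, which is what one actually uses afterwards.
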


Before giving the proof, we make a few comments. First, we observe that if the Proposition holds for one value of $r$, for example for $\mathbb{S}_0$, $\mathbb{S}_1$ and $C_0$, then it holds for any $r$, and conversely, by formulae~\eqref{18marzo2018-6} and~\eqref{18marzo2018-7}.

Proposition \ref{18marzo2018-9} says that one can give a definition of strong isomonodromic deformations alternative to Definition~\ref{18marzo2018-3}, namely a deformation such that\footnote{We don't write $dD=0$, because $D$ has integer entries.}
\begin{gather*}
D \ \text{is constant}, \qquad \dd L=\dd C_0=\dd \mathbb{S}_0=\dd \mathbb{S}_1=\dd B=0
\end{gather*}
on $\mathbb{D}\big(u^0\big)$. This is the definition adopted in \cite{JMU}, in case $A(u)$ is diagonalizable with distinct eigenvalues and no resonances. Here we have extended it without assumptions on~$A(u)$. Notice also that one should, as in \cite{JMU}, say that $\dd\mathbb{S}_0=\dd\mathbb{S}_1=\dd C_0$ on the polydisc $\mathbb{D}^\prime$ where $Y_r(z,u)$ have the properties prescribed by Sibuya's results. Nevertheless, by Proposition~\ref{16marzo2018-1} point~(F), we can extend the properties to $\mathbb{D}\big(u^0\big)$.

In case $A(u)$ is skew-symmetric and diagonalizable, the above characterisation of isomonodromic deformations with form \eqref{18marzo2018-11} is well known in the theory of Frobenius manifolds, developed in \cite{Dub1,Dub2}, where $A(u)$ is named $V(u)$ and $\Lambda$ is called $U$.

\begin{proof}[Proof of Proposition \ref{18marzo2018-9}] For a weak isomonodromic deformation $D$, $L$ and $B$ are constant (Propositions~\ref{30luglio 2018-3} and~\ref{16marzo2018-1}), so we have nothing to prove about them.

$\bullet$ Suppose that the deformation is strong, so that $\dd Y_r=\omega Y_r$ by Proposition~\ref{18marzo2018-4}. We prove that $\dd C_r=0$ for any $r$. Indeed
\begin{align*}
\omega =\dd Y_rY_r^{-1}&=\dd Y^{(0)} \big(Y^{(0)}\big)^{-1}+Y^{(0)} \dd C_r C_r^{-1} \big(Y^{(0)}\big)^{-1}\\
& = \omega +Y^{(0)} \dd C_r C_r^{-1} \big(Y^{(0)}\big)^{-1} \quad \Longleftrightarrow \quad \dd C_r=0.
\end{align*}
We prove that $\dd \mathbb{S}_r=0$. Invoking again Proposition~\ref{18marzo2018-4}, we have
\begin{gather*}
\omega_j= \frac{\partial Y_{r+1}}{\partial u_j} Y_{r+1}^{-1}=\frac{\partial Y_{r}}{\partial u_j} Y_{r}^{-1} +
 Y_{r}\frac{\partial \mathbb{S}_r}{\partial u_j} \mathbb{S}_r^{-1}Y_{r}^{-1}
 =\omega_j+ Y_{r}\frac{\partial \mathbb{S}_r}{\partial u_j} \mathbb{S}_r^{-1}Y_{r}^{-1}
 \quad \Longleftrightarrow \quad
\frac{\partial \mathbb{S}_r}{\partial u_j}=0.
\end{gather*}

$\bullet$ Conversely, we assume that $\dd \mathbb{S}_0=\dd \mathbb{S}_1=\dd B=\dd C_0=\dd L=0$, and that $D$ is constant. First, we see that $\dd \mathbb{S}_r=0$ and $\dd C_r=0$ for any $r$, by formulae~\eqref{18marzo2018-6} and~\eqref{18marzo2018-7}. By virtue of Proposition \ref{18marzo2018-4}, it suffices to show that $\dd Y_r=\omega Y_r$ for any~$r$. By construction $\partial_zY_r=\omega_0(z,u)Y_r$, which is the differential system~\eqref{11marzo2018-1}, so it suffices to show that $\partial_j Y_r=\omega_j Y_r$, $j=1,\dots,n$. Since $\dd C_0=0$ and each $\dd \mathbb{S}_r=0$, we have
\begin{gather}\label{2agosto2018-1}
\dd Y^{(0)}\big(Y^{(0)}\big)^{-1} = \dd Y_0 Y_0^{-1}=\cdots= \dd Y_r Y_r^{-1}=\dd Y_{r+1}Y_{r+1}^{-1}=\cdots \qquad \forall\, r\in \mathbb{Z}.
\end{gather}
Moreover, $Y^{(0)}$ is a weak isomonodromic family, so that $\dd Y^{(0)}=\omega Y^{(0)}$, where $\omega$ is \eqref{29marzo2018-1} with components \eqref{29marzo2018-2}, namely
\begin{gather*}
\omega_j(z,u)=zE_j+[F_1(u),E_j]+\mathcal{D}_j(u).
\end{gather*}
Thus, by \eqref{2agosto2018-1}, each $Y_r$ also satisfies
\begin{gather*}
\dd Y_r= \omega Y_r,\qquad\forall\, r.
\end{gather*}
We need to prove that $\mathcal{D}_j(u)=0$. At $z=\infty$, using the fact that $dB=0$ and $\partial_j\Lambda=E_j$, we have for any $r\in \mathbb{Z}$
 \begin{gather*}
\omega_j=\frac{\partial Y_r}{\partial u_j}Y_r^{-1}=\frac{\partial \widehat{Y}_r}{\partial u_j}\widehat{Y}_r^{-1} +z\widehat{Y}_r E_j \widehat{Y}_r ^{-1}= zE_j +[F_1,E_j]+\mathcal{O}\big(z^{-1}\big).
\end{gather*}
The above $\mathcal{O}(z^{-1})$ stands for an asymptotic expansion in $\mathcal{S}_r(\overline{\mathbb{D}^\prime})$, given for any $r$ by the same series in $z^{-1}$. Since $ \omega_j$ is single valued in $z$, then $\mathcal{O}(z^{-1})$ is a convergent Taylor series at $z=\infty$. Thus, $\omega_j-zE_j-[F_1,E_j]$ is holomorphic both at $z=0$ and $z=\infty$, and it vanishes as $z\to \infty$. Hence, by Liouville theorem,
 \begin{gather*}
\omega_j =zE_j+[F_1,E_j],
 \end{gather*}
with $\mathcal{D}_j=0$. The equality extends from $\mathbb{D}^\prime$ to $\mathbb{D}\big(u^0\big)$ by analiticity.
\end{proof}

\section[The non-generic case of coalescing eigenvalues of $\Lambda(u)$]{The non-generic case of coalescing eigenvalues of $\boldsymbol{\Lambda(u)}$}\label{25marzo2018-3}
Having defined weak and strong isomonodromic deformations when the eigenvalues of $\Lambda$ are distinct and $A(u)$ is any, the next step towards non-generic isomonodromic deformations is to extended weak and strong deformations to the case when some eigenvalues of $\Lambda$ coalesce, namely when $u_i-u_j\to 0$ for some $i\neq j$. In this section, we give a ``holomorphic'' extension, summarised in Theorem~\ref{25marzo2018-1} below, which constitutes one of the main results of~\cite{CDG}.

 Let $u^C=\big(u_1^C,\dots,u_n^C\big)\in\mathbb{C}^n$ be a {\it coalescence point} (here, ``$C$'' stands for ``coalescence''), namely \begin{gather*} u_i^C=u_j^C \quad \text{for some indexes } i \neq j\in\{1,2,\dots,n\}.
 \end{gather*}
 We consider a polydisc of radius $\epsilon>0$ centered at $u^C$ and denote it by $\mathcal{U}_\epsilon\big(u^C\big)$. It is to be noticed that there exists a {\it coalescence locus} in $\mathcal{U}_\epsilon\big(u^C\big)$, let it be denoted by $\Delta$, containing $u^C$ and defined by
 \begin{gather*}
 \Delta:=\mathcal{U}_\epsilon\big(u^C\big)\cap \left(\bigcup_{i\neq j}\{u_i-u_j=0\}\right).
\end{gather*}
In order to study the local theory at $u^C$, we assume that $\epsilon$ is small, so that $u^C$ is ``the most'' coalescent point. This means that if $k\neq l$ are indexes such that $u_k^C-u_l^C\neq 0$, then $\epsilon$ is sufficiently small to guarantee that $u_k-u_l\neq 0$ for every point of~$\mathcal{U}_\epsilon\big(u^C\big)$.

We fix a half-line of direction $\arg z=\widetilde{\tau}$ in $\mathcal{R}$ which, now, does not coincide with any of the Stokes rays associated with $\Lambda\big(u^C\big)$, namely with the half lines in $\mathcal{R}$ specified by
\begin{gather*}
\operatorname{Re}\big(z\big(u_k^C-u_l^C\big)\big)=0, \qquad \operatorname{Im}\big(z\big(u_k^C-u_l^C\big)\big)<0,
\end{gather*}
for those $k,l$ such that $u_k^C-u_l^C\neq 0$. We call $\widetilde{\tau}$ an {\it admissible direction at $u^C$}. The choice of $\widetilde{\tau}$ determines a {\it cell decomposition} of $\mathcal{U}_\epsilon\big(u^C\big)$, which is based on two ingredients. One is $\Delta$ above. The other one is the so called ``crossing locus''. In order to describe it, observe that if $u\in \mathcal{U}_\epsilon\big(u^C\big)$ (points of $\Delta$ are not excluded) is such that $u_i\neq u_j$ for some $i\neq j$, then the (infinitely many in~$\mathcal{R}$) Stokes rays
\begin{gather*}
\operatorname{Re}(z(u_i-u_j))=0,\qquad \operatorname{Im}(z(u_i-u_j))<0,
\end{gather*}
 corresponding to $(u_i,u_j)$, are well defined. The {\it crossing locus} $X(\widetilde{\tau})$ is made of those points such that some Stokes rays ``cross'' the admissible rays $\{z\in\mathcal{R}\,|\,\arg z=\widetilde{\tau}+h\pi\}$, $ h\in\mathbb{Z}$. Namely, $X(\widetilde{\tau})$ is made of points $u$ such that $\operatorname{Re}(e^{i\widetilde{\tau}}(u_i-u_j))=0$ for some $u_i\neq u_j$. Precisely,
\begin{gather*}
X(\widetilde{\tau}):=\bigcup_{1\leq i < j\leq n} \left\{ u\in \mathcal{U}_\epsilon\big(u^C\big) \text{ such that } u_i\neq u_j \text{ and } \arg(u_i-u_j)=\frac{3\pi}{2}-\widetilde{\tau} \text{ mod }\pi \right\}.
\end{gather*}
Let the ``walls'' be defined as
\begin{gather*}W(\widetilde{\tau}):=\Delta\cup X(\widetilde{\tau}).
\end{gather*}
Following \cite{CDG}, every connected component of $ \mathcal{U}_\epsilon\big(u^C\big)\backslash W(\widetilde{\tau})$ is called a {\it $\widetilde{\tau}$-cell}. We have proved in \cite{CDG} that every $\widetilde{\tau}$-cell is a topological cell, so in particular it is simply connected (simple connectedness is important for the proof, given in~\cite{CDG}, of Proposition~\ref{22marzo2018-1-BIS} below).

The isomonodromy deformation theory can be extended, in a holomorphic way, to the case of coalescing eigenvalues when a certain vanishing condition holds for the entries of~$A(u)$.

\begin{Lemma} \label{19marzo2018-5} Consider the Pfaffian system \eqref{11marzo2018-2}--\eqref{11marzo2018-6}, with $A(u)$ holomorphic on $\mathcal{U}_\epsilon\big(u^C\big)$. Let~$\mathbb{D}\big(u^0\big)$ be compactly contained in a $\widetilde{\tau}$-cell. Assume that the Frobenius integrability holds on~$\mathbb{D}\big(u^0\big)$ and that the $\omega_j(z,u)$ are holomorphic on $\mathbb{C}\times \mathbb{D}\big(u^0\big)$, with $z=\infty$ being at most a~pole, so that Proposition~{\rm \ref{16marzo2018-1}} holds on $\mathbb{D}\big(u^0\big)$ and \eqref{11marzo2018-2}--\eqref{11marzo2018-6} becomes system~\eqref{29marzo2018-1} with coefficients~\eqref{29marzo2018-2} on~$\mathbb{D}\big(u^0\big)$. Then, the following statements are true:
\begin{itemize}\itemsep=0pt
\item[$i)$] The coefficients $\omega_j(z,u)$ in~\eqref{29marzo2018-2} extend holomorphically on $\mathcal{U}_\epsilon\big(u^C\big)$ if and only if $\mathcal{D}_j(u)$ is holomorphic on $\mathcal{U}_\epsilon\big(u^C\big)$ and the following vanishing conditions hold in $\mathcal{U}_\epsilon\big(u^C\big)$:
\begin{gather}\label{22marzo2018-1}
A_{ij}(u)=\mathcal{O}(u_i-u_j)\qquad \text{whenever $u_i-u_j\to 0$ at a point of $\Delta$}.
\end{gather}
 In this case, the system \eqref{29marzo2018-1}, \eqref{29marzo2018-2} is Frobenius integrable on the whole $\mathcal{U}_\epsilon\big(u^C\big)$.

\item[$ii)$] A fundamental matrix solution $Y(z,u)$ exists holomorphic on $\mathcal{R}\times \mathcal{U}_\epsilon\big(u^C\big)$ if and only if~\eqref{22marzo2018-1} holds.
\end{itemize}
 \end{Lemma}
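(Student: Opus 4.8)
The plan is to reduce everything to the explicit shape of the coefficients produced by Proposition~\ref{16marzo2018-1} on $\mathbb{D}\big(u^0\big)$, namely \eqref{29marzo2018-2},
\[
\omega_j(z,u)=zE_j+[F_1(u),E_j]+\mathcal{D}_j(u),
\]
and to localise the only possible obstruction to extending across the coalescence locus $\Delta$. The summand $zE_j$ is entire in $z$ and $u$-independent, hence irrelevant; $\mathcal{D}_j(u)$ is diagonal; and the commutator $[F_1,E_j]$ is purely off-diagonal, with entries $A_{ab}(u)(\delta_{aj}-\delta_{bj})/(u_a-u_b)$ whose only poles lie along the hyperplanes $\{u_a=u_b\}$ making up $\Delta$. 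I would therefore treat the diagonal and off-diagonal parts of $\omega_j(0,u)$ separately.

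For part $(i)$, the decomposition $\omega_j(0,u)=\mathcal{D}_j(u)+[F_1(u),E_j]$ is precisely the splitting of $\omega_j(0,u)$ into its diagonal and off-diagonal parts, so $\omega_j$ extends holomorphically to $\mathcal{U}_\epsilon\big(u^C\big)$ if and only if both parts do. The diagonal part is $\mathcal{D}_j$, which accounts for the first half of the stated condition. For the off-diagonal part only the entries $A_{jb}/(u_j-u_b)$ and $-A_{aj}/(u_a-u_j)$ appear; when $u_k^C\neq u_l^C$ the smallness of $\epsilon$ (``$u^C$ most coalescent'') keeps the denominator bounded away from zero, so the genuine poles sit only on the components $\{u_i=u_j\}$ of $\Delta$. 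Here the decisive step is a holomorphic division lemma: after the linear coordinate change $w=u_i-u_j$, a holomorphic function on the polydisc remains holomorphic after division by $w$ if and only if its restriction to $\{w=0\}$ vanishes; applied to $A_{ij}$ this gives exactly \eqref{22marzo2018-1}. Finally, to promote holomorphy to integrability on all of $\mathcal{U}_\epsilon\big(u^C\big)$, I would use the identity theorem: the conditions \eqref{11marzo2018-3} (equivalently \eqref{23marzo2018-8} and \eqref{24marzo2018-3}) are holomorphic identities in the now globally holomorphic coefficients and hold on the open subset $\mathbb{D}\big(u^0\big)$ of the connected polydisc, hence propagate to the whole of $\mathcal{U}_\epsilon\big(u^C\big)$.

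Part $(ii)$ then splits cleanly. For ``$\Leftarrow$'', granted \eqref{22marzo2018-1}, part $(i)$ makes \eqref{29marzo2018-1}--\eqref{29marzo2018-2} an integrable Pfaffian system with holomorphic coefficients, Fuchsian at $z=0$, over the simply connected polydisc $\mathcal{U}_\epsilon\big(u^C\big)$; the Frobenius argument already used in Proposition~\ref{30luglio 2018-3} then yields a fundamental solution holomorphic on $\mathcal{R}\times\mathcal{U}_\epsilon\big(u^C\big)$. For ``$\Rightarrow$'', if such a holomorphic and everywhere invertible $Y$ solves the Pfaffian system, then each $\omega_j=\partial_{u_j}Y\,Y^{-1}$ is holomorphic on $\mathcal{R}\times\mathcal{U}_\epsilon\big(u^C\big)$; since on $\mathbb{C}\times\mathbb{D}\big(u^0\big)$ it coincides with the meromorphic expression \eqref{29marzo2018-2}, uniqueness of analytic continuation forces that expression to be pole-free on $\mathcal{U}_\epsilon\big(u^C\big)$, which by $(i)$ is \eqref{22marzo2018-1}.

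The step I expect to demand the most care is the division lemma together with the bookkeeping around $\Delta$: one must verify that exactly first-order vanishing is required, that each off-diagonal entry is singular along a single hyperplane so that divisibility by the one factor $(u_i-u_j)$ suffices even where several components of $\Delta$ meet, and that the pairs with $u_k^C\neq u_l^C$ are uniformly excluded by the choice of $\epsilon$. Granting Propositions~\ref{16marzo2018-1} and~\ref{30luglio 2018-3}, the analytic-continuation and existence arguments are then routine.
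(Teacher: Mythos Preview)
Your proposal is correct and follows essentially the same route as the paper: both arguments read off the explicit form \eqref{29marzo2018-2} of $\omega_j$, observe that the only possible singularities come from the off-diagonal entries $A_{ab}(\delta_{aj}-\delta_{bj})/(u_a-u_b)$, deduce the equivalence with \eqref{22marzo2018-1}, propagate integrability by analytic continuation, and handle part~$(ii)$ by the linear Pfaffian existence theorem in one direction and $\omega=\dd Y\,Y^{-1}$ in the other. Your diagonal/off-diagonal splitting and explicit division-lemma justification make the ``only if'' direction of~$(i)$ more transparent than the paper's terse statement, but the underlying argument is the same.
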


\begin{proof} The statement follows simply by recalling that, by Proposition \ref{16marzo2018-1}, for $u\in\mathbb{D}\big(u^0\big)$ we have \eqref{29marzo2018-2}, namely
\begin{gather*}
 \omega_j(z,u)= zE_j+\left( \frac{A_{ab}(\delta_{aj}-\delta_{bj})}{u_a-u_b} \right)_{a,b=1}^n+\mathcal{D}_j(u),
\end{gather*}
so that if $u_i-u_j\to 0$, the condition $A_{ij}\to 0$ guarantees analyticity on $\mathcal{U}_\epsilon\big(u^C\big)$. The integrability condition \eqref{11marzo2018-3} then holds by analytic continuation from $\mathbb{D}\big(u^0\big)$ to $\mathcal{U}_\epsilon\big(u^C\big)$. The last statement concerning $Y(z,u)$ follows from the fact that if~\eqref{22marzo2018-1} holds, then the Pfaffian system is integrable and linear on $\mathcal{U}_\epsilon\big(u^C\big)$ with holomorphic (in $u$) coefficients. Conversely, if a fundamental matrix $Y(z,u)$ exists holomorphic on $\mathcal{R}\times \mathcal{U}_\epsilon\big(u^C\big)$, then $\omega=\dd YY^{-1}$ has coefficients holomorphic in $u$ on $\mathcal{U}_\epsilon\big(u^C\big)$, which implies~\eqref{22marzo2018-1}.
\end{proof}

Under the assumptions of Lemma \ref{19marzo2018-5}, then Propositions \ref{11marzo2018-7} and~\ref{30luglio 2018-3} hold with the replacements $u^0\mapsto u^C$ and $\mathbb{D}\big(u^0\big)
\mapsto \mathcal{U}_\epsilon\big(u^C\big)$, if and only if $A_{ij}(u)=\mathcal{O}(u_i-u_j)$ for $u_i-u_j\to 0$ at $\Delta$. Namely

\begin{Corollary}\label{31luglio2018-1} Under the assumptions of Lemma~{\rm \ref{19marzo2018-5}}, $A(u)$ is holomorphically similar to a~constant Jordan form~$J$ on $\mathcal{U}_\epsilon\big(u^C\big)$, and there exists an isomonodromic family of fundamental solutions $Y^{(0)}(z,u)$ in Levelt form \eqref{11marzo2018-8}--\eqref{3aprile2018-1}, holomorphic on $\mathcal{R}\times \mathcal{U}_\epsilon\big(u^C\big)$, with constant~$L$ and~$D$, if and only if $A_{ij}(u)=\mathcal{O}(u_i-u_j)$ for $u_i-u_j\to 0$ at $\Delta$.
\end{Corollary}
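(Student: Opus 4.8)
The plan is to recognise that this corollary is simply Propositions~\ref{11marzo2018-7} and~\ref{30luglio 2018-3} transported from $\mathbb{D}\big(u^0\big)$ to the larger polydisc $\mathcal{U}_\epsilon\big(u^C\big)$, and that Lemma~\ref{19marzo2018-5} supplies exactly the two hypotheses those propositions require on $\mathcal{U}_\epsilon\big(u^C\big)$: holomorphy of the coefficients $\omega_j(z,u)$ of \eqref{29marzo2018-2} on $\mathbb{C}\times\mathcal{U}_\epsilon\big(u^C\big)$, and Frobenius integrability of \eqref{29marzo2018-1} on all of $\mathcal{U}_\epsilon\big(u^C\big)$. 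The structural observation that makes everything work is that neither of the two earlier proofs ever uses the distinctness of the eigenvalues of $\Lambda$: they use only holomorphy of $A$ and of the $\omega_j$, the integrability of the auxiliary systems \eqref{23marzo2018-7}, the simple connectedness of the domain, and the Fuchsian gauge reduction of \cite{YT}. The separation $u_i\neq u_j$ entered solely to keep the denominators $u_a-u_b$ in \eqref{29marzo2018-2} away from zero, and that role is now taken over by the vanishing condition~\eqref{22marzo2018-1}.

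For the implication $(\Leftarrow)$ I would assume~\eqref{22marzo2018-1} and invoke Lemma~\ref{19marzo2018-5}(i), which gives that the $\omega_j(z,u)$ are holomorphic on $\mathbb{C}\times\mathcal{U}_\epsilon\big(u^C\big)$ and that the Pfaffian system is Frobenius integrable on the whole $\mathcal{U}_\epsilon\big(u^C\big)$. I then rerun the proof of Proposition~\ref{11marzo2018-7}: since $\mathcal{U}_\epsilon\big(u^C\big)$ is a polydisc, hence simply connected, and \eqref{23marzo2018-7} has holomorphic coefficients $\omega_j(0,u)$ and is integrable there, it admits a holomorphic fundamental matrix $\tilde{G}(u)$ on $\mathcal{U}_\epsilon\big(u^C\big)$; the identical computation showing $\partial_j\big(\tilde{G}^{-1}A\tilde{G}\big)=0$ then forces $\tilde{G}^{-1}A\tilde{G}$ to be constant, reducible to a constant Jordan form $J$, so that $G(u)=\tilde{G}(u)\mathcal{G}$ realises the holomorphic similarity. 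For the Levelt family I rerun the proof of Proposition~\ref{30luglio 2018-3}: the system is Fuchsian at $z=0$ with holomorphic-in-$u$ coefficients on $\mathcal{U}_\epsilon\big(u^C\big)$ and integrable, so Theorem~5 of \cite{YT} furnishes a gauge transformation $V(z,u)$ holomorphic on $\mathbb{C}\times\mathcal{U}_\epsilon\big(u^C\big)$ reducing it to the constant-coefficient $z$-equation at a fixed base point $u^C$, whose Levelt solution is $u$-independent; this yields $Y^{(0)}(z,u)$ in the form \eqref{11marzo2018-8}--\eqref{3aprile2018-1} with constant $D$ and $L$, holomorphic on $\mathcal{R}\times\mathcal{U}_\epsilon\big(u^C\big)$.

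For the converse $(\Rightarrow)$ I would note that the asserted Levelt family $Y^{(0)}(z,u)$ is, in particular, a fundamental matrix solution of the Pfaffian system holomorphic on $\mathcal{R}\times\mathcal{U}_\epsilon\big(u^C\big)$. Hence $\omega=\dd Y^{(0)}\big(Y^{(0)}\big)^{-1}$ has coefficients holomorphic in $u$ on $\mathcal{U}_\epsilon\big(u^C\big)$, and Lemma~\ref{19marzo2018-5}(ii) (equivalently the holomorphy clause of Lemma~\ref{19marzo2018-5}(i)) immediately forces the vanishing condition~\eqref{22marzo2018-1}. The only point needing genuine care — and the place I would expect scrutiny — is the explicit verification that the two earlier proofs make no hidden use of $u_i\neq u_j$, so that their arguments transfer verbatim once Lemma~\ref{19marzo2018-5} has secured holomorphy and integrability on the full polydisc. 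Granting that, the corollary is a direct transcription with no new analytic difficulty beyond what Lemma~\ref{19marzo2018-5} already resolves.
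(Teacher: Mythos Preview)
Your proposal is correct and matches the paper's own treatment: the corollary is stated there precisely as Propositions~\ref{11marzo2018-7} and~\ref{30luglio 2018-3} with the replacements $u^0\mapsto u^C$, $\mathbb{D}\big(u^0\big)\mapsto\mathcal{U}_\epsilon\big(u^C\big)$, made possible by Lemma~\ref{19marzo2018-5}; the paper even remarks explicitly (after Proposition~\ref{30luglio 2018-3} and again in the proof of Theorem~\ref{25marzo2018-1}) that those proofs go through verbatim once the $\omega_j$ are holomorphic on the enlarged domain. Your observation that the earlier arguments never use $u_i\neq u_j$ directly---only through the holomorphy of the coefficients---is exactly the point the paper relies on implicitly.
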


We turn to the fundamental matrices $Y_r(z,u)$ in \eqref{11marzo2018-10-BIS}--\eqref{11marzo2018-10}. Notice that $\widetilde{\tau}$ is admissible at any $u\in\mathbb{D}\big(u^0\big)$, because $\mathbb{D}\big(u^0\big)$ is assumed to be compactly contained in a $\widetilde{\tau}$-cell. Then, the matrices $Y_r(z,u)$ are well defined and holomorphic on a small $\mathbb{D}^\prime\subset \mathbb{D}\big(u^0\big)$, as proved by Sibuya, and all the results described in Section~\ref{19marzo2018-1} apply on $\mathbb{D}^\prime$. Moreover, by item (F) of Proposition~\ref{16marzo2018-1}, $\mathbb{D}^\prime$ is extended to $\mathbb{D}\big(u^0\big)$, so that the fundamental matrices~$Y_r(z,u)$ are holomprphic on $\mathcal{R}\times \mathbb{D}\big(u^0\big)$. Complications arise if we want to study the $u-$analytic continuation of the matrices $Y_r(z,u)$ on the whole $\mathcal{U}_\epsilon\big(u^C\big)$, and their asymptotic behaviour.

The problem can indeed be studied also in the non-isomonodromic case, when the $Y_r(z,u)$'s are defined only for $u\in \mathbb{D}^\prime$. When $u$ moves outside $\mathbb{D}^\prime$, Sibuya's local result concerning
 analyticity in $u$ does no longer apply. This issue was analyszed in \cite{CDG}, for a system~\eqref{11marzo2018-1} without any assumption that it be isomonodromic, namely without assuming that there is a Pfaffian system behind \eqref{11marzo2018-1}. The following holds (indeed, for the more general system \eqref{22marzo2018-6} below).

 \begin{Proposition}[\cite{CDG}] \label{22marzo2018-1-BIS} Let $A(u)$ be holomorphic on $\mathcal{U}_\epsilon\big(u^C\big)$. For any $z\in\mathcal{R}$, the fundamental matrices $Y_r(z,u)$, $r\in\mathbb{Z}$, of a differential system~\eqref{11marzo2018-1} or~\eqref{22marzo2018-6}, not necessarily isomonodromic, defined on $\mathbb{D}^\prime\subset \mathbb{D}\big(u^0\big)$ $($on $\mathbb{D}\big(u^0\big)$, in the isomonodromic case$)$ can be analytically continued w.r.t.~$u$ on the whole $\widetilde{\tau}$-cell containing $u^0$, maintaining the asymptotics \eqref{11marzo2018-10}. The asymptotics is uniform in any compact subset $K$ of the cell, for $z\to \infty$ in the following sector \begin{gather*}\mathcal{S}_r(K):=\bigcap_{u\in K}\mathcal{S}_r(u).\end{gather*}
 Moreover, the analytic continuation -- maintaining for each $u$ the asymptotics in $\mathcal{S}_r(u)$ -- exists along any curve slightly beyond the boundary of the cell, if the curve crosses the boundary at a~point corresponding to just one Stokes ray crossing $\arg z=\widetilde{\tau}$ $($simple crossing$)$.
\end{Proposition}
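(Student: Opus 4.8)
The plan is to construct $Y_r(z,u)$ over the whole cell by analytic continuation in $u$, gluing the local solutions furnished by Sibuya's theorem \cite{Sh11,Sh4,Sh2} and using the uniqueness of a solution with prescribed asymptotics as the mechanism that makes the continuation single-valued. The structural fact I would lean on throughout is that inside a single $\widetilde{\tau}$-cell no Stokes ray ever meets an admissible ray $\arg z=\widetilde{\tau}+h\pi$ (the exclusion of $X(\widetilde{\tau})$) and no eigenvalues coalesce (the exclusion of $\Delta$); consequently the combinatorial type of the sectors $\mathcal{S}_r(u)$ --- in particular which set of basic Stokes rays each contains --- is constant as $u$ ranges over the cell, and the labelling by $r$ is globally well defined there. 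The whole argument uses only the asymptotic data at $z=\infty$, so it applies verbatim to the more general system \eqref{22marzo2018-6} as well.

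First I would record the local input: around every point $\bar u$ of the cell Sibuya's theorem gives a polydisc on which $Y_r(z,u)$ is holomorphic in $u$, with the asymptotics \eqref{11marzo2018-10} holding uniformly for $z\to\infty$ in the corresponding fixed sector. The second ingredient is uniqueness \cite{BJL1}: for fixed $u$ with $\widetilde{\tau}$ admissible, $Y_r(z,u)$ is the \emph{only} fundamental solution whose normalised factor is asymptotic to $F(z,u)$ on a sector containing exactly the basic Stokes rays sitting inside $\mathcal{S}_r(u)$. Gluing is then immediate: on the overlap of two Sibuya polydiscs, at any common point $u_0$ the two local solutions are asymptotic to the same $F(z,u_0)$ on sectors which, being in the same cell, carry the same basic rays, so uniqueness forces them to coincide as functions of $z$. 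The local pieces therefore agree on overlaps and patch to a holomorphic $Y_r(z,u)$ on the cell, simple connectedness of the cell ensuring that the continuation is globally single-valued and the index $r$ consistently defined. Uniformity on a compact $K$ follows by covering $K$ with finitely many such polydiscs and intersecting their fixed sectors, which yields the asymptotics uniformly on $\mathcal{S}_r(K)=\bigcap_{u\in K}\mathcal{S}_r(u)$.

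The second assertion --- continuation slightly beyond the boundary at a simple crossing --- is where the real work lies. At a boundary point of the cell lying on $X(\widetilde{\tau})$ and corresponding to a single Stokes ray, for one pair $(i,j)$, crossing $\arg z=\widetilde{\tau}$, the sector $\mathcal{S}_r(u)$ cannot be transported rigidly: as $u$ passes the wall the crossing ray leaves one side of the admissible direction and enters the other, so the nearest Stokes ray forming an edge of $\mathcal{S}_r(u)$ changes. I would localise near the crossing and observe that only the single exponential $e^{(u_i-u_j)z}$ reverses its dominant/recessive role along $\widetilde{\tau}$; tracking the associated off-diagonal behaviour shows that the relation $\widehat{Y}_r\sim F$ survives on a sector whose opening stays above $\pi$ throughout the passage, so that Sibuya's theorem can be re-applied on a neighbourhood straddling the wall and $Y_r(z,u)$ continues while keeping, for each $u$, its asymptotics in the evolving $\mathcal{S}_r(u)$.

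The main obstacle is precisely this last step. The uniform estimates underlying Sibuya's theorem degenerate as a Stokes ray approaches $\widetilde{\tau}$, since the relevant exponential ceases to be uniformly recessive; one must therefore quantify how fast the crossing ray separates from $\widetilde{\tau}$ just beyond the wall and verify that a sector of opening $>\pi$ carrying the correct basic rays genuinely persists there. Restricting to a single simple crossing is what keeps this bookkeeping tractable: only one Stokes entry is activated, whereas a multiple crossing would entangle several exponentials at once, which is exactly why the statement is confined to simple crossings.
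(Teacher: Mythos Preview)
The paper does not actually prove this proposition in the text: immediately after stating it, the author writes ``We refer to \cite{CDG} for the proof'' and points to Chapters~12 and~13 of that work. So there is no in-paper proof to compare your attempt against. What the paper does offer is a short commentary flagging precisely the two places where the argument is hard: (i) in the non-isomonodromic case, existence of the analytic continuation ``requires a more sophisticated extension of Sibuya's results''; (ii) the preservation of the asymptotic behaviour on $\mathcal{S}_r(K)$ ``is non trivial, both in the non-isomonodromic and isomonodromic cases''.

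Your scheme for the interior of the cell --- apply Sibuya locally at each point, glue using the uniqueness of the solution with prescribed asymptotics on a sector containing a fixed set of basic Stokes rays, and invoke simple connectedness of the cell --- is the right architecture and is consistent with how the paper frames the result (it stresses, just before the proposition, that simple connectedness of a $\widetilde{\tau}$-cell is ``important for the proof''). As a sketch this is fine. What it does not supply, and what the paper explicitly says is non-trivial, is the uniform control needed to assert that the glued object retains the asymptotics on $\mathcal{S}_r(K)$: Sibuya's estimates come with implicit constants depending on the local polydisc, and passing from finitely many local uniform asymptotics to a single uniform asymptotic on an arbitrary compact $K$ requires more than just intersecting sectors --- one has to check that the error bounds themselves patch. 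You have asserted this step rather than argued it.

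For the crossing of a simple wall you are candid that this is ``where the real work lies'' and that the Sibuya estimates degenerate as a Stokes ray approaches $\widetilde{\tau}$. That is an accurate diagnosis, but what you have written is a description of the difficulty rather than its resolution. The quantitative analysis you allude to --- tracking the single exponential $e^{(u_i-u_j)z}$ and showing a sector of opening $>\pi$ persists --- is exactly the content of the cited chapters of \cite{CDG}, and cannot be replaced by a paragraph. So your proposal is a reasonable outline that correctly identifies the mechanism and the obstacles, but it is not a proof; the paper itself declines to give one here for the same reason.
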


 We refer to \cite{CDG} for the proof. Here we notice that in the strong isomonodromic case of system~\eqref{11marzo2018-1}, it is easy to show the existence of the analytic continuation claimed in Proposition~\ref{22marzo2018-1-BIS}. Indeed, if $A(u)$ is holomorphic on $\mathcal{U}_\epsilon\big(u^C\big)$, the analytic continuation exists beyond the $\widetilde{\tau}$-cell containing $u^0$, over any simply connected subset of $\mathcal{U}_\epsilon\big(u^C\big)\backslash \Delta$ containing~$u^0$, because
 \begin{gather*}
 \omega_j(z,u)= zE_j+\left( \frac{A_{ab}(\delta_{aj}-\delta_{bj})}{u_a-u_b} \right)_{a,b=1}^n
\end{gather*}
 is holomorphic on $\mathcal{U}_\epsilon\big(u^C\big)\backslash \Delta$. Therefore, since $\dd Y_r=\omega Y_r$, then $Y_r$ enjoys the properties of the solutions of a~linear Pfaffian system with holomorphic coefficients away form $\Delta$. However, the non-isomonodromic case (system~\eqref{11marzo2018-1} or~\eqref{22marzo2018-6}) requires a more sophisticated extension of Sibuya's results. Moreover, the proof that the analytic continuation of $Y_r(z,u)$ maintains the asymptotic behaviour $Y_F(z,u)$ on $\mathcal{S}_r(K)$ is non trivial, both in the non-isomonodromic and isomonodromic cases. See \cite[Chapters~12 and~13]{CDG} for details.

Having established in Proposition \ref{22marzo2018-1-BIS} the analytic continuation and the asymptotics on a~$\widetilde{\tau}$-cell, we have to study what happens if $u$ moves outside the cell along a curve crossing~$W(\widetilde{\tau})$. We face further problems:
 \begin{itemize}\itemsep=0pt
 \item[i)] When the crossing occurs at a point of $\Delta$, the coefficients $F_k(u)$ in $Y_F(z,u)$ have poles for $u_i-u_j=0$, because (see~\cite{CDG})
 \begin{gather} \label{5aprile2018-1}
(F_k)_{i j}=\frac{1}{u_j-u_i}\left\{
(A_{i i}-A_{jj}+k-1
)(F_{k-1})_{i j}
+\sum_{p\neq i}A_{i p}(F_{k-1})_{p j}\right\}, \qquad i\neq j;\\
\label{5aprile2018-2}
 k(F_{k})_{i i}=-\sum_{j\neq i }A_{i j}(F_{k})_{ji}.
\end{gather}

\item[ii)] An actual solution $Y_r(z,u)$ is in general multivalued for loops around $u_i-u_j=0$ and diverges as $u_i-u_j\to 0$ along any direction (provided we are in a case when $Y_r(z,u)$ can be extended analytically along any curve outside the cell, as it certainly happens for strong isomonodromic deformations, as discussed in the comments after Proposition~\ref{22marzo2018-1-BIS}). The reader can find some explicit examples in the papers~\cite{Eretico1} and~\cite{Alcala}, respectively based on talks at workshops in CRM, Pisa (February 13--17, 2017) and at the University of Alcal\'a (September 4--8, 2017).

\item[iii)] The asymptotic behaviour of $Y_r(z,u)$ in sectors $\mathcal{S}_r(K)$ does no longer hold when $u$ is outside the cell containing $u^0$, because some Stokes rays cross the admissible direction~$\widetilde{\tau}$ (again, provided that $Y_r(z,u)$ can be extended analytically to some simply connected subset of~$\mathcal{U}_\epsilon\big(u^C\big)\backslash \Delta$ containing the cell, as it happens in the case of strong isomonodromic deformations).
\end{itemize}

The issue above can be solved in the {\it strong} isomonodromic case when the vanishing conditions~\eqref{22marzo2018-1} hold. Let the deformation be strongly isomonodromic in $\mathbb{D}\big(u^0\big)$ compactly contained in a~$\widetilde{\tau}$-cell, so that
\begin{gather}\label{22marzo2018-3}
\dd Y_r=\omega Y_r
\end{gather}
on $\mathbb{D}\big(u^0\big)$, with $\omega$ as in \eqref{18marzo2018-11}. Since \eqref{22marzo2018-3} is linear, and its coefficients are holomorphic on the whole $\mathcal{U}_\epsilon\big(u^C\big)$ if and only if conditions~\eqref{22marzo2018-1} hold, then the following proposition holds.

\begin{Proposition} \label{23marzo2018-1} In case of strong isomonodromy deformations, the fundamental mat\-ri\-ces $Y_r(z,u)$ extend to holomorphic functions on $\mathcal{R}\times\mathcal{U}_\epsilon\big(u^C\big)$ if and only if the vanishing conditions~\eqref{22marzo2018-1} hold.
 \end{Proposition}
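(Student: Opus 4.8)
The plan is to treat the statement as a direct consequence of Proposition~\ref{18marzo2018-4} and Lemma~\ref{19marzo2018-5}. By Proposition~\ref{18marzo2018-4}, a strong isomonodromic deformation is precisely one for which the fundamental matrices $Y_r(z,u)$ satisfy the linear Pfaffian system~\eqref{22marzo2018-3}, $\dd Y_r=\omega Y_r$, with $\omega$ having components~\eqref{18marzo2018-11} (i.e.\ with $\mathcal{D}_j\equiv 0$), initially on $\mathbb{D}\big(u^0\big)$. The two implications can then be proved separately, following the argument already used in Lemma~\ref{19marzo2018-5}.

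For the ``if'' direction I would assume the vanishing conditions~\eqref{22marzo2018-1}. Then, exactly as in the first part of Lemma~\ref{19marzo2018-5}, the apparent poles of the entries $A_{ab}(\delta_{aj}-\delta_{bj})/(u_a-u_b)$ of $\omega_j$ along the coalescence locus $\Delta$ cancel, so each $\omega_j(z,u)$ is holomorphic on $\mathbb{C}\times\mathcal{U}_\epsilon\big(u^C\big)$ with at most a pole at $z=\infty$, and the system~\eqref{22marzo2018-3} is Frobenius integrable on the whole $\mathcal{U}_\epsilon\big(u^C\big)$. The $z$-component $\omega_0=\Lambda+A(u)/z$ is holomorphic on $\mathcal{R}\times\mathcal{U}_\epsilon\big(u^C\big)$, the Fuchsian singularity at $z=0$ being unwrapped on the universal covering $\mathcal{R}$. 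Since $\mathcal{R}\times\mathcal{U}_\epsilon\big(u^C\big)$ is simply connected and the coefficients of the integrable linear system~\eqref{22marzo2018-3} are holomorphic there, the solution $Y_r$ originally defined on $\mathbb{D}\big(u^0\big)$ continues analytically, in a single-valued way, to a function holomorphic on all of $\mathcal{R}\times\mathcal{U}_\epsilon\big(u^C\big)$.

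For the ``only if'' direction I would assume that $Y_r$ extends to a holomorphic function on $\mathcal{R}\times\mathcal{U}_\epsilon\big(u^C\big)$. Then $\omega=\dd Y_r\,Y_r^{-1}$ has coefficients $\omega_j=\partial_{u_j}Y_r\,Y_r^{-1}$ holomorphic in $u$ on $\mathcal{U}_\epsilon\big(u^C\big)$ for every $z$. Comparing with~\eqref{18marzo2018-11}, the entry of $\omega_j$ in position $(a,j)$ equals $-A_{aj}/(u_a-u_j)$ and that in position $(j,b)$ equals $A_{jb}/(u_j-u_b)$; holomorphy at a point of $\Delta$ where $u_a\to u_j$ (resp.\ $u_j\to u_b$) forces $A_{aj}=\mathcal{O}(u_a-u_j)$ (resp.\ $A_{jb}=\mathcal{O}(u_j-u_b)$), which is exactly~\eqref{22marzo2018-1}. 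This is the same computation as in the converse part of Lemma~\ref{19marzo2018-5}.

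The only delicate point is the single-valuedness of the extension in the ``if'' direction: one must make sure that the analytic continuation of $Y_r$, carried out as a solution of the integrable linear system~\eqref{22marzo2018-3}, returns to itself along loops. This is guaranteed here because $\mathcal{U}_\epsilon\big(u^C\big)$ is a polydisc and $\mathcal{R}$ is simply connected, so that $\mathcal{R}\times\mathcal{U}_\epsilon\big(u^C\big)$ carries no nontrivial monodromy once the coefficients are globally holomorphic; hence the uniqueness of solutions of an integrable linear Pfaffian system yields a well-defined extension. Everything else is the bookkeeping already performed in Lemma~\ref{19marzo2018-5} together with the identification of $\omega$ provided by Proposition~\ref{18marzo2018-4}.
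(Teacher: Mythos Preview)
Your proof is correct and follows exactly the route the paper takes. The paper's own argument is the single sentence immediately preceding the proposition: since by Proposition~\ref{18marzo2018-4} each $Y_r$ satisfies the linear Pfaffian system~\eqref{22marzo2018-3} with coefficients~\eqref{18marzo2018-11}, and since those coefficients are holomorphic on the whole $\mathcal{U}_\epsilon(u^C)$ precisely when~\eqref{22marzo2018-1} holds (this is Lemma~\ref{19marzo2018-5}), the holomorphic extension of $Y_r$ is equivalent to the vanishing conditions. Your write-up simply makes explicit the two implications and the simple-connectedness point that the paper leaves implicit.
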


As far as the formal fundamental matrix $Y_F(z,u)$ is concerned, we have the following

\begin{Proposition} \label{23marzo2018-2} Let $A(u)$ be holomorphic on $\mathcal{U}_\epsilon\big(u^C\big)$ and assume that the vanishing conditions~\eqref{22marzo2018-1} hold in $\mathcal{U}_\epsilon\big(u^C\big)$. Then, in case of strong isomonodromic deformations, the coefficients~$F_k(u)$, $k\geq 1$, of the formal solution $Y_F(z,u)$ are holomorphic on $\mathcal{U}_\epsilon\big(u^C\big)$.
\end{Proposition}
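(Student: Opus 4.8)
The plan is to prove holomorphicity of each $F_k$ by induction on $k$, but splitting $F_k$ into its off-diagonal and diagonal parts and treating them by \emph{different} recursions. The diagonal part is governed by \eqref{5aprile2018-2}, which is harmless; the off-diagonal part I will \emph{not} compute from \eqref{5aprile2018-1} (whose explicit prefactor $1/(u_j-u_i)$ is exactly the source of the feared poles on $\Delta$), but from the $u$-deformation equations, which are pole-free. I begin with the base case $k=1$. By \eqref{18marzo2018-8}, $(F_1)_{ij}=A_{ij}/(u_j-u_i)$ for $i\neq j$; if $u_i^C\neq u_j^C$ then $u_i-u_j$ never vanishes on $\mathcal{U}_\epsilon\big(u^C\big)$, while if $u_i^C=u_j^C$ the vanishing condition \eqref{22marzo2018-1} gives $A_{ij}=\mathcal{O}(u_i-u_j)$, so in either case $(F_1)_{ij}$ is holomorphic on the whole polydisc. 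The diagonal entries $(F_1)_{ii}=-\sum_{j\neq i}A_{ij}(F_1)_{ji}$ are then holomorphic as sums of products of holomorphic functions. Hence $F_1$, and therefore $\omega_j=zE_j+[F_1,E_j]$ of \eqref{18marzo2018-11}, is holomorphic on all of $\mathcal{U}_\epsilon\big(u^C\big)$.

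Next I record the deformation equations for the $F_k$. In the strong case, Proposition~\ref{18marzo2018-4} gives $\partial_{u_j}Y_r=\omega_j Y_r$ with $\omega_j=zE_j+[F_1,E_j]$; writing $Y_r=\widehat{Y}_r z^{B}e^{z\Lambda}$ and using $\dd B=0$, $\partial_{u_j}\Lambda=E_j$ and $z^{B}E_j=E_jz^{B}$, this becomes $\partial_{u_j}\widehat{Y}_r=(zE_j+[F_1,E_j])\widehat{Y}_r-z\widehat{Y}_rE_j$. Since $\widehat{Y}_r\sim F$ with asymptotics uniform in $u\in\mathbb{D}^\prime$ that may be differentiated in $u$ (Sibuya), the formal series satisfies $\partial_{u_j}F=z[E_j,F]+[F_1,E_j]F$ as an identity of formal power series in $z^{-1}$. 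Matching the coefficient of $z^{-(k-1)}$ yields, for $k\geq 2$,
\[
[E_j,F_k]=\partial_{u_j}F_{k-1}-[F_1,E_j]F_{k-1}.
\]
Here $[E_j,F_k]$ isolates precisely the off-diagonal entries of the $j$-th row and column, since $[E_j,F_k]_{jb}=(F_k)_{jb}$ for $b\neq j$ and $[E_j,F_k]_{aj}=-(F_k)_{aj}$ for $a\neq j$. These identities hold a priori only on $\mathbb{D}^\prime$, but, being equalities of meromorphic functions, they propagate by analytic continuation to all of $\mathcal{U}_\epsilon\big(u^C\big)\backslash\Delta$, where the $F_k$ are a priori only meromorphic.

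Now the induction closes quickly. Assume $F_{k-1}$ is holomorphic on $\mathcal{U}_\epsilon\big(u^C\big)$. Then the right-hand side of the displayed deformation equation is holomorphic (both $F_1$ and $F_{k-1}$ are), so every off-diagonal entry $(F_k)_{ab}$, $a\neq b$, read off as $[E_a,F_k]_{ab}$ or as $-[E_b,F_k]_{ab}$, extends holomorphically across $\Delta$. With the off-diagonal part of $F_k$ in hand, the diagonal recursion \eqref{5aprile2018-2}, namely $k(F_k)_{ii}=-\sum_{j\neq i}A_{ij}(F_k)_{ji}$, expresses each $(F_k)_{ii}$ through holomorphic quantities, so it too is holomorphic. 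This proves $F_k$ holomorphic on $\mathcal{U}_\epsilon\big(u^C\big)$ for every $k\geq 1$.

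The crux — and the only step that is not routine — is obtaining the pole-free deformation equation of the second paragraph and using it \emph{in place of} \eqref{5aprile2018-1} for the off-diagonal entries. Its validity rests on two ingredients specific to the present setting: the strong isomonodromy hypothesis, which forces $\omega_j=zE_j+[F_1,E_j]$ with no $\mathcal{D}_j$-term, and the already-established holomorphicity of $F_1$ (hence of $\omega_j$) across $\Delta$. The transfer of the Pfaffian equation $\partial_{u_j}Y_r=\omega_jY_r$ to the formal level via the differentiability of $\widehat{Y}_r\sim F$ is standard but should be stated explicitly; alternatively one may verify $\partial_{u_j}F=z[E_j,F]+[F_1,E_j]F$ purely algebraically from the $z$-recursion together with the integrability condition \eqref{11marzo2018-3}, which sidesteps the asymptotic argument entirely.
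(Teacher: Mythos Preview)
Your proof is correct and follows essentially the same approach as the paper: you establish holomorphicity of $F_1$ from the vanishing conditions, then derive from the strong isomonodromy equation $\partial_{u_j}Y_r=\omega_jY_r$ the pole-free recursion $[E_j,F_k]=\partial_{u_j}F_{k-1}-[F_1,E_j]F_{k-1}$ for the off-diagonal part (the paper writes this as $[F_{l+1},E_i]=[F_1,E_i]F_l-\partial_iF_l$), and close the induction via the diagonal relation. The only cosmetic difference is that you use \eqref{5aprile2018-2} for the diagonal of $F_k$ in terms of the off-diagonal of $F_k$, while the paper uses the shifted version \eqref{3nov2015-6}; both work.
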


\begin{proof} The coefficients $F_k(u)$ are computed recursively from \eqref{11marzo2018-1}. This standard computation yields coefficients depending rationally on $A(u)$ and on the differences $(u_i-u_j)$, which appear in the denominators, as in \eqref{5aprile2018-1}. In particular \eqref{18marzo2018-8} holds:
\begin{gather*}(F_1)_{ij}(u)=\frac{A_{ij}(u)}{u_j-u_i}, \qquad i\neq j, \qquad (F_1)_{ii}(u)=-\sum_{j\neq i}A_{ij}(u)(F_1)_{ji}(u).
\end{gather*}
Thus, if \eqref{22marzo2018-1} holds, $F_1(u)$ is holomorphic in $\mathcal{U}_{\epsilon}\big(u^C\big)$. By Proposition \ref{22marzo2018-1-BIS}, the asymptotic expansion is uniform in compact subsets of a cell, so we can substitute it into $\partial_iY=\omega_i Y$ (with~$\omega_i$ as in~\eqref{18marzo2018-11}), compare coefficients of $z^{-l}$ and find
\begin{gather}\label{21luglio-3}
[F_{l+1}(u),E_i]=[F_1(u),E_i]F_l(u)-\partial_i F_l(u),\qquad l\geq 1,
\end{gather}
with
\begin{gather*}
 [F_{l+1}(u),E_i ]=\left(
\begin{matrix}
0 & & (F_{l+1})_{1i} & & 0
\\
 & & \vdots & &
\\
-(F_{l+1})_{i1} & \cdots & 0 & \cdots & -(F_{l+1})_{in}
\\
 & & \vdots &
 \\
 0 & & (F_{l+1})_{ni} & & 0
\end{matrix}
\right).
\end{gather*}
Moreover, $\operatorname{diag}(F_{l+1})$ is determined by
\begin{gather}\label{3nov2015-6}
l (F_{l+1})_{ii}(u)=-\sum_{j\neq i}A_{ij}(u)(F_l)_{ji}(u).
\end{gather}
Therefore, \eqref{21luglio-3}--\eqref{3nov2015-6} recursively determines $F_{l+1}$ as a function of $F_l, F_{l-1}, \dots, F_1$. Since $F_1$ is holomorphic when conditions~\eqref{22marzo2018-1} hold, by induction all the $F_{l+1}(u)$ are holomorphic.
 \end{proof}

\begin{Remark}Proposition \ref{23marzo2018-2} holds only in the strong isomonodromic case. In the non-isomonodromic case the vanishing conditions~\eqref{22marzo2018-1} only guarantee that~$F_1(u)$ is holomorphic. In order for $F_1, F_2, \dots, F_l$ to be holomorphic up to a certain $l$, also the following quantities
\begin{gather*}\bigl( A_{ii}-A_{jj}+k-1 \bigr)(F_{k-1})_{ij}+\sum_{p\neq i}A_{i p}(F_{k-1})_{p j},
\qquad 2\leq k \leq l,\end{gather*}
must vanish when $u_i-u_j\to 0$. The above, which follow from \eqref{5aprile2018-1}--\eqref{5aprile2018-2}, are conditions on the entries of~$A$, since the $F_k$ are determined by~$A$. \end{Remark}

It remains to check what happens to the asymptotic behaviour of the matrices $Y_r(z,u)$ outside the cell. This requires a certain amount of non-trivial work, which we have done in~\cite{CDG} for a~system of the form
 \begin{gather} \label{22marzo2018-6}
 \frac{\dd Y}{\dd z}=\left(\Lambda +\frac{A(u)}{z}+\sum_{j=2}^\infty \frac{A_j(u)}{z^j}\right) Y, \qquad u\in \mathcal{U}_\epsilon\big(u^C\big),
 \end{gather}
 without assuming that the system is isomonodromic. The series above is assumed to converge at $z=\infty$ with holomorphic matrix coefficients $A_j(u)$ on $\mathcal{U}_\epsilon\big(u^C\big)$. The asymptotic theory at $z=\infty$ for \eqref{22marzo2018-6}, with $u\in\mathbb{D}^\prime$ sufficiently small contained in a $\widetilde{\tau}$-cell, is the same as for~\eqref{11marzo2018-1}. Namely, there is a unique formal solution \begin{gather*}
Y_F(z,u)=F(z,u) z^{B(u)}e^{z\Lambda} ,\qquad B(u)=\operatorname{diag}(A(u)), \qquad F(z,u)=I+\sum_{k=1}^\infty F_k(u)z^{-k},
\end{gather*}
and unique actual solutions $Y_r(z,u)$.

 In order to proceed, we need to take $\epsilon$ sufficiently small, as follows. Consider the sub-class of Stokes rays associated with the pairs of eigenvalues $u_i$ and $u_j$, with label $i$, $j$ corresponding to components of $u^C$ satisfying $u_i^C\neq u_j^C$. If $\epsilon$ is small enough\footnote{It suffices to take $\epsilon$ less than the minimum over $i$ and $j$, such that $u_i^C\neq u_j^C$, of the distances between the two parallel lines in the complex plane, one passing through~$u_i^C$ and one through~$u_j^C$, with direction $3\pi/2-\widetilde{\tau}$ (mod~$\pi$, or mor~$2\pi$, which is the same).}, these rays do not cross any admissible direction $\widetilde{\tau} +h\pi$ when $u$ varies in $\mathcal{U}_\epsilon\big(u^C\big)$. We define a sector $\widehat{\mathcal{S}}_r(u)\subset \mathcal{R}$, which contains the ``half-plane'' $\widetilde{\tau}+(r-2)\pi<\arg z <\widetilde{\tau}+(r-1)\pi$ and extends up to the nearest Stokes rays lying outside the ``half-plane'' in the sub-class above (notice that $\mathcal{S}_r(u)\subset \widehat{\mathcal{S}}_r(u)$). Then, we define
 \begin{gather*}
 \widehat{\mathcal{S}}_r:=\bigcap_{u\in \mathcal{U}_\epsilon(u^C)} \widehat{\mathcal{S}}_r(u),
 \end{gather*}
 The sectors $\widehat{\mathcal{S}}_r$ have angular opening greater than $\pi$. The following theorem has been proved in~\cite{CDG}. It requires a non-trivial amount of work, which we necessarily skip here.

 \begin{Theorem}[\cite{CDG}] \label{23marzo2018-3} Consider the differential system \eqref{22marzo2018-6}, with coefficients $A(u)$, $A_j(u)$ holomorphic on $\mathcal{U}_\epsilon\big(u^C\big)$, where $\epsilon$ is specified as above. Assume that all the $F_k(u)$ are holomorphic on $\mathcal{U}_\epsilon\big(u^C\big)$. Moreover, assume that the fundamental matrices
 \begin{gather*}Y_r(z,u) =\widehat{Y}_r(z,u) z^{B(u)}e^{z\Lambda},
 \end{gather*}
 which are holomorphic on a $\widetilde{\tau}$-cell by Proposition~{\rm \ref{22marzo2018-1-BIS}}, admit analytic continuation on the who\-le~$\mathcal{U}_\epsilon\big(u^C\big)$ as single valued holomorphic functions of~$u$, for any $r\in \mathbb{Z}$ and $z$ fixed. Then the following results hold.

 \begin{itemize}\itemsep=0pt
 \item The asymptotic representation of $Y_r(z,u)$ in terms of the formal matrix
 \begin{gather*} Y_F(z,u)=F(z,u)z^{B(u)}e^{z\Lambda},\end{gather*} extends beyond the $\widetilde{\tau}$-cell, namely
\begin{gather*}
 \widehat{Y}_r(z,u)\sim F(z,u)= I+\sum_{k=1}^\infty F_k(u) z^{-k}, \qquad z\to \infty \text{ in } \widehat{\mathcal{S}}_r,
 \end{gather*}
 uniformly in every compact subset of $\mathcal{U}_{\epsilon_1}\big(u^C\big)$ for every $\epsilon_1<\epsilon$. Moreover, for any $r\in\mathbb{Z}$ the Stokes matrix $\mathbb{S}_r(u)$ relating $Y_r(z,u)$ and $Y_{r+1}(z,u)$ satisfies
 \begin{gather*}
 (\mathbb{S}_r)_{ij}=(\mathbb{S}_r)_{ji}=0 \qquad \text{for $i$, $j$ such that } u_i^C=u_j^C.
 \end{gather*}
 \item The system \eqref{22marzo2018-6} at {\rm fixed} $u=u^C$, namely
 \begin{gather} \label{29marzo2018-3}
 \frac{\dd Y}{\dd z}=\left(\Lambda\big(u^C\big) +\frac{A\big(u^C\big)}{z}+\sum_{j=2}^\infty \frac{A_j\big(u^C\big)}{z^j}\right) Y,
 \end{gather}
 has formal solutions with the following structure \footnote{Without the assumptions of the theorem, the formal solutions have a more complicated structure. See \cite{BJL2,BJL3} for a general theory, and \cite{CDG} for the specific case here studied.}
 \begin{gather*}
 \mathring{Y}_F(z)=\mathring{F}(z) z^{B(u^C)}e^{z\Lambda(u^C)},
 \end{gather*}
 where $\mathring{F}(z)=I+ \sum\limits_{f=1}^\infty \mathring{F}_Kz^{-k} $ is a formal series and $ B\big(u^C\big)=\operatorname{diag}\big(A_{11}\big(u^C\big),\dots,A_{nn}\big(u^C\big)\big)$.

 \item The solution $\mathring{Y}_F(z)$ is unique if and only if the diagonal entries of $A\big(u^C\big)$ do not differ by non-zero integers; otherwise, there is a family of formal solutions $ \mathring{Y}_F(z)$.

 \item For any fixed formal solution $\mathring{Y}_F(z)$ of \eqref{29marzo2018-3}, there are unique actual solutions
 \begin{gather*}
 \mathring{Y}_r(z)=\mathring{\widehat{Y}}_r(z) z^{B(u^C)}e^{z\Lambda(u^C)},\qquad r\in \mathbb{Z},
 \end{gather*} such that
 \begin{gather*}
\mathring{\widehat{Y}}_r(z)\sim \mathring{F}(z) \qquad\text{for } z\to \infty \text{ in } \widehat{\mathcal{S}}_r\big(u^C\big).
 \end{gather*}
\item In particular, there exists a formal solution $ \mathring{Y}_F(z)$ of \eqref{29marzo2018-3} satisfying
 \begin{gather*}
 \mathring{Y}_F(z)=Y_F\big(z,u^C\big).
 \end{gather*}
 The corresponding unique actual solutions $ \mathring{Y}_r(z)$ satisfy
 \begin{gather*}
 \mathring{Y}_r(z)= \lim_{u\to u_c}Y_r(z,u)\equiv Y_r\big(z,u^C\big).
 \end{gather*}
Let $\mathring{\mathbb{S}}_r$ be the Stokes matrices of the above solutions $\mathring{Y}_r(z)$. Then,
 \begin{gather*}
 \lim_{u\to u_c} \mathbb{S}_r(u)=\mathring{\mathbb{S}}_r.
 \end{gather*}
\end{itemize}
\end{Theorem}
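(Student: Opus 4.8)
The plan is to treat the assertions in order of logical dependence, with the uniform asymptotic statement as the technical core from which the remaining items follow by soft arguments. The hard part, which I expect to be the main obstacle, is to upgrade the asymptotics $\widehat{Y}_r(z,u)\sim F(z,u)$ from the $\widetilde{\tau}$-cell (where it is given by Proposition~\ref{22marzo2018-1-BIS} in the narrow sectors $\mathcal{S}_r(u)$) to the enlarged sectors $\widehat{\mathcal{S}}_r$, uniformly on compact subsets of $\mathcal{U}_{\epsilon_1}(u^C)$ up to the coalescence point. The starting data are exactly what the hypotheses provide: $Y_r(z,u)$ continues to a single-valued holomorphic function of $u$ on all of $\mathcal{U}_\epsilon(u^C)$, and every coefficient $F_k(u)$ stays holomorphic up to $u^C$. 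I would write, for the normalized unknown $\widehat{Y}_r$, the Volterra integral equation equivalent to \eqref{22marzo2018-6} after subtracting a truncation $F_{[N]}$ of the formal series, and estimate the remainder by successive approximations. The kernel carries the exponential factors $e^{z(u_i-u_j)}$: for the non-coalescing pairs ($u_i^C\neq u_j^C$) the choice of $\epsilon$ keeps the associated Stokes rays away from the admissible directions, so these factors stay recessive in $\widehat{\mathcal{S}}_r$ and provide decay; for the coalescing pairs ($u_i^C=u_j^C$) the differences $u_i-u_j$ are of order $\epsilon$ and it is precisely the holomorphy of the $F_k(u)$ up to $u^C$ that keeps the corresponding contributions bounded as the separation degenerates. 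The delicate point is that the classical Sibuya estimates break down exactly in this coalescence regime; one must arrange the Gronwall-type constants in the iteration to be independent of $u$, which is the source of the lengthy analysis of \cite{CDG}.

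Granting the uniform asymptotics, the vanishing $(\mathbb{S}_r)_{ij}=(\mathbb{S}_r)_{ji}=0$ for $u_i^C=u_j^C$ follows from the width of the sectors. Because $\widehat{\mathcal{S}}_r$ is built ignoring the Stokes rays of the coalescing pairs and has opening greater than $\pi$, the overlap $\widehat{\mathcal{S}}_r\cap\widehat{\mathcal{S}}_{r+1}$ meets subsectors in which $\operatorname{Re}(z(u_i-u_j))$ takes each sign. Writing $\mathbb{S}_r=Y_r^{-1}Y_{r+1}$ and inserting the common asymptotics of $\widehat{Y}_r$ and $\widehat{Y}_{r+1}$, the factor $z^{B}e^{z\Lambda}\mathbb{S}_r e^{-z\Lambda}z^{-B}$ must be asymptotic to the identity in the overlap; on the subsector where $e^{z(u_i-u_j)}$ is dominant this forces $(\mathbb{S}_r)_{ij}=0$, and on the subsector where it is recessive it forces $(\mathbb{S}_r)_{ji}=0$. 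As $\mathbb{S}_r$ does not depend on $z$, both entries vanish.

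For the fixed system \eqref{29marzo2018-3} at $u^C$ I would produce the formal solution as the coefficientwise limit $\mathring{F}(z):=F(z,u^C)$, which exists since each $F_k(u)$ is holomorphic up to $u^C$. Substituting the ansatz $\mathring{F}(z)z^{B(u^C)}e^{z\Lambda(u^C)}$ into \eqref{29marzo2018-3} gives recursions that are the $u\to u^C$ limits of \eqref{5aprile2018-1}--\eqref{5aprile2018-2} together with the contributions of the higher $A_j$; for indices in distinct blocks of $\Lambda(u^C)$ they are solved as in the generic case, while for a coalescing block the leading coefficient $u_i^C-u_j^C$ vanishes and solvability reduces to a condition on $\operatorname{diag}(A(u^C))=B(u^C)$, guaranteed by the same holomorphy hypothesis. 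The residual freedom is that of right-multiplying $\mathring{F}$ by a constant matrix $C$ for which $z^{B(u^C)}e^{z\Lambda(u^C)}C e^{-z\Lambda(u^C)}z^{-B(u^C)}$ is a formal series in $z^{-1}$ tending to $I$: such $C$ must be block-diagonal with respect to the coalescing blocks (the exponential separates distinct blocks), and within a block its off-diagonal entries can be nonzero only at resonant positions where $A_{ii}(u^C)-A_{jj}(u^C)$ is a non-zero integer. This yields uniqueness precisely when no two diagonal entries of $A(u^C)$ differ by a non-zero integer, and a family otherwise.

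Finally, for any fixed $\mathring{Y}_F$ the existence and uniqueness of actual solutions $\mathring{Y}_r$ with $\mathring{\widehat{Y}}_r\sim\mathring{F}$ in $\widehat{\mathcal{S}}_r(u^C)$ is the Balser--Jurkat--Lutz/Sibuya asymptotic existence theorem for \eqref{29marzo2018-3}; the opening of $\widehat{\mathcal{S}}_r(u^C)$ exceeds $\pi$, which is admissible because the coalescing eigenvalues contribute no Stokes rays at $u^C$, and this width is exactly what forces uniqueness. To identify $\mathring{Y}_r$ with $\lim_{u\to u^C}Y_r(z,u)$, I would invoke the uniform asymptotics of the first step: the limit exists and equals $Y_r(z,u^C)$ by the assumed holomorphy in $u$, and exchanging the limit with the uniform relation shows $Y_r(z,u^C)\sim F(z,u^C)=\mathring{F}(z)$ in $\widehat{\mathcal{S}}_r(u^C)$; uniqueness then gives $\mathring{Y}_r(z)=Y_r(z,u^C)$ for the distinguished choice $\mathring{Y}_F=Y_F(z,u^C)$. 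Since $\mathbb{S}_r(u)=Y_r^{-1}Y_{r+1}$ is holomorphic up to $u^C$, passing to the limit gives $\lim_{u\to u^C}\mathbb{S}_r(u)=\mathring{\mathbb{S}}_r$.
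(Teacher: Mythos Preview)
The paper does not prove this theorem at all: immediately before the statement it says ``The following theorem has been proved in~\cite{CDG}. It requires a non-trivial amount of work, which we necessarily skip here.'' So there is no proof in this paper to compare against; the theorem is imported wholesale from \cite{CDG} and used as a black box to derive Theorem~\ref{25marzo2018-1}.

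That said, your outline is a faithful sketch of the strategy actually carried out in \cite{CDG}. You correctly locate the technical core (upgrading the asymptotics from $\mathcal{S}_r(u)$ on a cell to $\widehat{\mathcal{S}}_r$ uniformly up to $u^C$), and you correctly name the mechanism (an integral-equation / successive-approximation scheme in which the constants must be made uniform as $u_i-u_j\to 0$). You also correctly flag that this is exactly where the classical Sibuya argument breaks down and where the length of \cite{CDG} comes from. The soft consequences you draw---vanishing of $(\mathbb{S}_r)_{ij}$ for coalescing indices from the width of $\widehat{\mathcal{S}}_r\cap\widehat{\mathcal{S}}_{r+1}$, the structure and (non)uniqueness of $\mathring{Y}_F$ via the block recursion and the resonance condition on $\operatorname{diag}(A(u^C))$, the existence/uniqueness of $\mathring{Y}_r$ on sectors of opening $>\pi$, and the identification $\mathring{Y}_r=Y_r(\cdot,u^C)$ by passing to the limit in the uniform asymptotics---are all correct and are exactly how these items are deduced once the uniform asymptotic statement is in hand.

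The only honest assessment is that your proposal, like the paper itself, does not \emph{prove} the hard step; it describes it and defers to \cite{CDG}. If the intent was a self-contained proof, the gap is the uniform-in-$u$ estimate on the integral operator near the coalescence locus: you would need to actually exhibit the contour deformations and the bounds on the kernel entries $e^{z(u_i-u_j)}$ for coalescing pairs, and show that the holomorphy of the $F_k$ suffices to absorb the loss of exponential decay in those blocks. Everything else in your sketch is fine.
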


\begin{Remark}We use the notation $\mathring{Y}$, $\mathring{F}$, $\mathring{\mathbb{S}}$, etc., for objects $Y$, $F$, $\mathbb{S}$, etc, relative to the system at $u=u^C$, such as~\eqref{29marzo2018-3} above and~\eqref{23marzo2018-6} below.
\end{Remark}

Theorem \ref{23marzo2018-3} is at the core of the validity of one of the main results of \cite{CDG}, namely Theo\-rem~\ref{25marzo2018-1} below. Indeed, by Propositions \ref{23marzo2018-1} and \ref{23marzo2018-2}, the assumptions of Theorem \ref{23marzo2018-3} hold in the strong isomonodromic case if the vanishing conditions \eqref{22marzo2018-1} hold. This yields the following

\begin{Theorem}[\cite{CDG}]\label{25marzo2018-1}
Let $A(u)$ be holomorphic on $\mathcal{U}_\epsilon\big(u^C\big)$ and $\epsilon$ small as specified above. If the system
\begin{gather}
\label{11marzo2018-TRIS}
\partial_z Y=\left(\Lambda+\frac{A(u)}{z}\right)Y.
\end{gather}
is strongly isomonodromic on a polydisk interior to a $\widetilde{\tau}$-cell and the vanishing conditions \eqref{22marzo2018-1} hold, then:
\begin{enumerate}\itemsep=0pt
\item[$a)$] the coefficients of the unique formal solution $Y_F(z,u)=F(z,u)z^Be^{z\Lambda}$ are holomorphic on $\mathcal{U}_\epsilon\big(u^C\big)$, and the corresponding actual solutions $Y_r(z,u)$ extend holomorphically on \smash{$\mathcal{R}\times \mathcal{U}_{\epsilon}\big(u^C\big)$}, maintaining the asymptotics
\begin{gather*}
\widehat{Y}_r(z,u)\sim F(z,u) \qquad \text{for $z\to\infty$ in $\widehat{\mathcal{S}}_r$},
\end{gather*}
 uniformly in every compact subset of $\mathcal{U}_{\epsilon_1}\big(u^C\big)$ $(\forall\, \epsilon_1<\epsilon)$.

\item[$b)$] For any $r$, the essential monodromy data $\mathbb{S}_r$, $\mathbb{S}_{r+1}$, $B$, $L$, $D$ and $C_r$ are constant on the whole $\mathcal{U}_\epsilon\big(u^C\big)$ and satisfy
\begin{gather*}
(\mathbb{S}_r)_{ij}=(\mathbb{S}_{r+1})_{ij}=(\mathbb{S}_r)_{ji}=(\mathbb{S}_{r+1})_{ji}=0 \qquad \text{for $i$, $j$ such that } u_i^C=u_j^C.
\end{gather*}

\item[$c)$] They coincide with a set of essential monodromy data of
\begin{gather}\label{23marzo2018-6}
\partial_z Y=\left(\Lambda\big(u^C\big)+\frac{A\big(u^C\big)}{z}\right)Y.
\end{gather}
 in the following way. Take the formal solution $\mathring{Y}_F(z)$ of \eqref{23marzo2018-6} satisfying
 \begin{gather*}
 \mathring{Y}_F(z)=Y_F\big(z,u^C\big)
 \end{gather*} and the associated actual solutions $\mathring{Y}_r(z)$, with Stokes matrices $\mathring{\mathbb{S}}_r$. Then, choose a fundamental matrix solution of \eqref{23marzo2018-6} in Levelt form
 \begin{gather}
 \label{31luglio2018-7}
 \mathring{Y}^{(0)}(z)= \mathring{\widehat{Y}}(z)z^D z^{\mathring{L}},
\end{gather}
 and take the corresponding connection matrix $\mathring{C}_r$ such that
 \begin{gather*}\mathring{Y}_r(z)=\mathring{Y}^{(0)}(z) \mathring{C}_r.\end{gather*}
 Then, there exists a fundamental matrix $Y^{(0)}(z,u)$ of~\eqref{11marzo2018-TRIS} in Levelt form such that the essential monodromy data of~\eqref{11marzo2018-TRIS} on the whole $\mathcal{U}_\epsilon\big(u^C\big)$ are
 \begin{gather*}B,\quad \mathbb{S}_r=\mathring{\mathbb{S}}_r,
 \quad
 \mathbb{S}_{r+1}=\mathring{\mathbb{S}}_{r+1} ,
 \quad
 C_r=\mathring{C}_r
 ,
 \quad
 L=\mathring{L},
 \quad
 D.\end{gather*}

\item[$d)$] If the diagonal entries of $A(u^c)$ do not differ by non-zero integers, then there is a unique formal solution $\mathring{Y}_F(z)$ which necessarily satisfies $\mathring{Y}_F(z)=Y_F\big(z,u^C\big)$.
\end{enumerate}
 \end{Theorem}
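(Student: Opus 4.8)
The plan is to reduce the entire statement to an application of Theorem~\ref{23marzo2018-3}, whose hypotheses I verify using Propositions~\ref{23marzo2018-1} and~\ref{23marzo2018-2}, and then to propagate the constancy of the monodromy data from the initial cell to the whole polydisc by analyticity. First I would note that \eqref{11marzo2018-TRIS} is the special case $A_j\equiv 0$ ($j\geq 2$) of \eqref{22marzo2018-6}, so the holomorphy hypotheses on $A(u)$ and the $A_j(u)$ in Theorem~\ref{23marzo2018-3} are automatic. The two substantial hypotheses are that all $F_k(u)$ be holomorphic on $\mathcal{U}_\epsilon\big(u^C\big)$, and that the $Y_r(z,u)$ admit single-valued holomorphic continuation in $u$ over the whole $\mathcal{U}_\epsilon\big(u^C\big)$. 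The first is exactly Proposition~\ref{23marzo2018-2} (strong deformation together with the vanishing conditions~\eqref{22marzo2018-1}); the second is Proposition~\ref{23marzo2018-1}, valid for the same reason. With the hypotheses in place, part~$a)$ is read off immediately: holomorphy of the $F_k$ is Proposition~\ref{23marzo2018-2}, the holomorphic extension of the $Y_r$ is Proposition~\ref{23marzo2018-1}, and the persistence of $\widehat{Y}_r\sim F$ on the enlarged sectors $\widehat{\mathcal{S}}_r$, uniformly on compact subsets, is the first bullet of Theorem~\ref{23marzo2018-3}.

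For part~$b)$ I would recall that strong isomonodromy on the initial polydisc $\mathbb{D}\big(u^0\big)$ forces $\dd\mathbb{S}_r=\dd\mathbb{S}_{r+1}=\dd C_r=0$ (Proposition~\ref{18marzo2018-9}), while $B$, $L$, $D$ are already constant there by Propositions~\ref{30luglio 2018-3} and~\ref{16marzo2018-1} and Corollary~\ref{31luglio2018-1}. The new content is that this constancy persists on all of $\mathcal{U}_\epsilon\big(u^C\big)$. Here I would invoke part~$a)$ together with Corollary~\ref{31luglio2018-1}: the solutions $Y_r(z,u)$ and a Levelt solution $Y^{(0)}(z,u)$ extend holomorphically in $u$ to the connected polydisc, so that $\mathbb{S}_r(u)=Y_r^{-1}Y_{r+1}$ and $C_r(u)=\big(Y^{(0)}\big)^{-1}Y_r$ are holomorphic functions of $u$ alone; being constant on the open subset $\mathbb{D}\big(u^0\big)$, they are constant on $\mathcal{U}_\epsilon\big(u^C\big)$ by the identity theorem. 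The vanishing $(\mathbb{S}_r)_{ij}=(\mathbb{S}_{r+1})_{ij}=(\mathbb{S}_r)_{ji}=(\mathbb{S}_{r+1})_{ji}=0$ for $u_i^C=u_j^C$ is then the Stokes-structure statement in the first bullet of Theorem~\ref{23marzo2018-3}.

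Part~$c)$ is where I take the coalescence limit. Since the $Y_r$ extend holomorphically up to and including $u=u^C$, I would set $\mathring{Y}_r(z):=Y_r\big(z,u^C\big)=\lim_{u\to u^C}Y_r(z,u)$ and $\mathring{Y}^{(0)}(z):=Y^{(0)}\big(z,u^C\big)$; the last bullets of Theorem~\ref{23marzo2018-3} guarantee that these are precisely the actual and Levelt solutions of the coalesced system~\eqref{23marzo2018-6} attached to $\mathring{Y}_F(z)=Y_F\big(z,u^C\big)$, with the same exponents $D$ and $L=\mathring{L}$, and that $\lim_{u\to u^C}\mathbb{S}_r(u)=\mathring{\mathbb{S}}_r$. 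Combining with the constancy of part~$b)$ gives $\mathbb{S}_r=\mathring{\mathbb{S}}_r$, $\mathbb{S}_{r+1}=\mathring{\mathbb{S}}_{r+1}$, and $C_r=\lim_{u\to u^C}C_r(u)=\big(\mathring{Y}^{(0)}\big)^{-1}\mathring{Y}_r=\mathring{C}_r$, which is the asserted identification. Part~$d)$ is then immediate from the uniqueness clause of Theorem~\ref{23marzo2018-3}: if the diagonal entries of $A\big(u^C\big)$ do not differ by non-zero integers, the formal solution $\mathring{Y}_F(z)$ is unique, hence coincides with the distinguished one $Y_F\big(z,u^C\big)$.

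The genuinely hard analysis --- extending the asymptotics across the walls $W(\widetilde{\tau})$ and controlling the coalescence limit of the Stokes matrices --- is entirely packaged inside Theorem~\ref{23marzo2018-3}, which I take for granted. Within the present argument the only points needing care are the passage from constancy on the cell to constancy on the full polydisc, which is legitimate precisely because part~$a)$ makes $\mathbb{S}_r(u)$ and $C_r(u)$ single-valued holomorphic in $u$ on the connected set $\mathcal{U}_\epsilon\big(u^C\big)$, and the matching of Levelt forms in the limit required for $C_r=\mathring{C}_r$. I expect this identity-theorem step and the limiting identification of connection matrices to be the main subtleties to state cleanly.
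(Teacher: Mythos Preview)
Your reduction of parts $a)$, $b)$, $d)$ to Propositions~\ref{23marzo2018-1}, \ref{23marzo2018-2} and Theorem~\ref{23marzo2018-3} is exactly the paper's argument, and your identity-theorem step for the constancy of $\mathbb{S}_r$, $C_r$ on the full polydisc is correct and matches the paper's implicit reasoning.

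The one place where your argument diverges from the paper, and where there is a small gap, is part~$c)$. The statement reads: \emph{choose} a Levelt form $\mathring{Y}^{(0)}(z)$ of the coalesced system~\eqref{23marzo2018-6}, and then there exists a Levelt form $Y^{(0)}(z,u)$ of the deformed system with matching data. You instead take a $Y^{(0)}(z,u)$ furnished by Corollary~\ref{31luglio2018-1} and \emph{define} $\mathring{Y}^{(0)}(z):=Y^{(0)}\big(z,u^C\big)$. This proves only that \emph{some} Levelt form at $u^C$ lifts, not that an \emph{arbitrary} one does; since Levelt forms are not unique, the quantifier matters for the intended application (computing monodromy data of the family from a freely chosen Levelt basis at the coalescence point). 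You also attribute the Levelt matching to Theorem~\ref{23marzo2018-3}, but that theorem says nothing about Levelt solutions --- it handles only $Y_F$, the $Y_r$, and the Stokes matrices.

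The paper fills this gap by going back to the proof of Proposition~\ref{30luglio 2018-3} (equivalently Corollary~\ref{31luglio2018-1}): the Yoshida--Takano gauge $Y=V(z,u)\widetilde{Y}$ satisfies $V\big(z,u^C\big)=I$ and reduces the Pfaffian system to the single equation~\eqref{23marzo2018-6} at $u^C$. Hence \emph{any} Levelt solution $\mathring{Y}^{(0)}(z)$ of~\eqref{23marzo2018-6} serves as the $\widetilde{Y}^{(0)}$ of that construction, and $Y^{(0)}(z,u):=V(z,u)\mathring{Y}^{(0)}(z)$ is the required isomonodromic Levelt family with $Y^{(0)}\big(z,u^C\big)=\mathring{Y}^{(0)}(z)$. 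This is precisely the ``limiting identification of connection matrices'' you flagged as delicate; the missing ingredient is the normalization $V\big(z,u^C\big)=I$, which makes the lift from $\mathring{Y}^{(0)}$ to $Y^{(0)}$ explicit.
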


\begin{proof} Points a), b), c) follow from Propositions~\ref{23marzo2018-1} and~\ref{23marzo2018-2}, Theorem~\ref{23marzo2018-3} and Corollary~\ref{31luglio2018-1}. We only need to justify that given a fundamental matrix $ \mathring{Y}^{(0)}(z)$ of~\eqref{23marzo2018-6} in Levelt form~\eqref{31luglio2018-7}, there exists a fundamental matrix~$Y^{(0)}(z,u)$ of $\dd Y=\omega(z,u) Y$ in Levelt form (namely an isomonodromic fundamental matrix of \eqref{11marzo2018-TRIS}), such that
 \begin{gather*}Y^{(0)}\big(z,u^C\big)=\mathring{Y}^{(0)}(z).\end{gather*}
To this end, recall the proof of Proposition \ref{30luglio 2018-3}, with the replacements $u\mapsto u^C$ and $\mathbb{D}\big(u^0\big) \mapsto \mathcal{U}_\epsilon\big(u^0\big)$ (this becomes the proof of Corollary \ref{31luglio2018-1}). The gauge transformation \eqref{31luglio2018-3}
\begin{gather*}
Y=V(z,u) \widetilde{Y},\qquad V\big(z,u^C\big)=I,
\end{gather*}
 transforms the Pfaffian system to a differential system \eqref{31luglio2018-4} in variable $z$ only, with $u^0$ replaced by $u^C$, namely
\begin{gather*}
 \dd\widetilde{Y}= \left[ \left(\frac{A\big(u^C\big)}{z}+\Lambda\big(u^C\big)\right)\dd z\right] \widetilde{Y},\qquad \dd \equiv \frac{\partial }{\partial z} \dd z.
 \end{gather*}
 This is exactly \eqref{23marzo2018-6}. A fundamental solution $\mathring{Y}^{(0)}(z)$ in Levelt form \eqref{31luglio2018-7} is actually a solution~\eqref{31luglio2018-5}, namely
 \begin{gather*}
 \mathring{Y}^{(0)}(z)=W(z) z^D z^{\mathring{L}},\qquad W(z)=W_0 +\sum_{\ell=1}^\infty W_\ell z^\ell, \qquad \det(W_0)\neq 0.
 \end{gather*}
 Thus, we conclude from \eqref{31luglio2018-3} that
 \begin{gather*}
 Y^{(0)}(z,u)= V(z,u) \mathring{Y}^{(0)}(z), \qquad Y^{(0)}\big(z,u^C\big)= \mathring{Y}^{(0)}(z).
 \end{gather*}

 Point d) follows from the recursive computation of the coefficients $\mathring{F}_k$ in
 \begin{gather*}
 \mathring{Y}_F(z)=\left(I+ \sum_{k=1}^\infty \mathring{F}_kz^{-k} \right)z^{B(u^C)}e^{z \Lambda(u^C)}.
 \end{gather*}
 For $1\leq \ell \neq s \leq n$ such that $u_\ell^C=u_s^C$, one finds at step $k$ that (recall that $\operatorname{diag}(A(u))=B$ is constant)
 \begin{gather*}
 (A_{\ell\ell}-A_{ss}+k)(\mathring{F}_k)_{\ell s}= \text{known expression from previous steps}.
 \end{gather*}
 This implies that if $A_{\ell\ell}-A_{ss}+k$ is different from zero for every $k\geq 1$, then all the $\mathring{F}_k$ are uniquely recursively determined. For the detailed formulae of the recursion, see \cite[Section~4]{CDG}.
 \end{proof}

In presence of the vanishing conditions \eqref{22marzo2018-1}, Theorem \ref{25marzo2018-1} assures that in order to find the monodromy data of the differential system \eqref{11marzo2018-TRIS} on $\mathcal{U}_\epsilon\big(u^C\big)$, we just need to find the monodromy data associated with the fundamental matrices $\mathring{Y}_r(z)$, $\mathring{Y}_{r+1}(z)$ of \eqref{23marzo2018-6} asymptotic to $ \mathring{Y}_F(z)=Y_F\big(z,u^C\big)$ in $\widehat{\mathcal{S}}_r\big(u^C\big)$, and with a solution $\mathring{Y}^{(0)}(z)= \mathring{\widehat{Y}}(z)z^D z^{\mathring{L}}$ of \eqref{23marzo2018-6} in Levelt form.

The computation of monodromy data of \eqref{11marzo2018-TRIS} (namely \eqref{11marzo2018-1}) is highly transcendental, so that even if $A(u)$ is completely known in a neighbourhood of $u^C$, in general it cannot be done at a~generic $u\in \mathcal{U}_\epsilon\big(u^C\big)$. On the other hand, it may happen that it can be explicitly done for~\eqref{23marzo2018-6}, because the system simplifies at the coalescence point, thanks to the null entries
\begin{gather*}
A_{ij}\big(u^C\big)=0 \qquad \text{whenever}\quad u_i^C=u_j^C.
\end{gather*} By Theorem \ref{25marzo2018-1}, the result so obtained yields the constant monodromy data of \eqref{11marzo2018-TRIS} in a~neighbourhood of~$u^C$. An example of this has been given in \cite{CDG} concerning monodromy data of a~Painlev\'e equation, and in \cite{CDG1} for the monodromy data of the Frobenius manifold associated with the reflection group~$A_3$.

Theorem \ref{25marzo2018-1} allows to compute monodromy data in a neighbourhood of a coalescence point also in cases when {\it we miss some information about the system away form $u^C$}. Indeed, suppose that the system \eqref{11marzo2018-TRIS} {\it is known only at} a coalescence point $u^C$; namely, we only know the explicit form of $A\big(u^C\big)$. Moreover, suppose that for some theoretical reason it is known that in a neighbourhood of $u^C$ the {\it unknown} $A(u)$ must satisfy the vanishing conditions~\eqref{22marzo2018-1}. Then, Theorem~\ref{25marzo2018-1} yields the essential monodromy data in a whole neighbourhood of $u^C$ without knowing explicitly $A(u)$ for $u\neq u^C$. This approach has been used in \cite{CDG1} to compute the monodromy data of the quantum cohomology of the Grassmannian ${\rm Gr}(2,4)$, and to prove in a~completely explicit way (i.e., by computations of the numerical values of the data) a~conjecture \cite{CDG1,C-et-all,Dub3,GGI} relating them to exceptional collections in derived categories of coherent sheaves on ${\rm Gr}(2,4)$ (for a~shorter summary, see also \cite{Eretico2}, which is based on a talk at CRM, Pisa, February 13--17, 2017). Notice that the quantum cohomology of almost all Grassmannians is characterised by a coalescence phenomenon~\cite{Cotti1} and that Theorem~\ref{25marzo2018-1} applies, due to the semisimplicity of these quantum cohomologies. This fact justifies the computation of the monodromy data for Grassmannians starting from a coalescence point.

\begin{Remark} When applying Theorem \ref{25marzo2018-1} as explained above, among the possible formal solutions $\mathring{Y}_F(z)$ of \eqref{23marzo2018-6}, we need precisely the one satisfying $\mathring{Y}_F(z)=Y_F\big(z,u^C\big)$. The fact that we do not have this information without knowing already $Y_F(z,u)$ may constitute a difficulty. Fortunately, the problem does not arise when the diagonal entries of $A\big(u^C\big)$ do not differ by non-zero integers, because in this case \eqref{23marzo2018-6} only has the unique formal solution $\mathring{Y}_F(z)=Y\big(z,u^C\big)$. In applications to Frobenius manifolds, discussed in \cite{CDG,CDG1,C-et-all,Eretico2}, we are in this good situation. Notice that the Stokes matrices are completely determined by \eqref{23marzo2018-6} in this case.\footnote{Since the choice of a solution in Levelt form is not unique, there may be a freedom in the data $C_r$, $D$, $L$. Only the Stokes matrices are uniquely determined.}
\end{Remark}

\begin{Remark}\label{10aprile2018-1} For a system analogous to \eqref{11marzo2018-1}, associated with a semisimple Frobenius manifold, where $A(u)$ is skew symmetric, a synthetic proof is given in \cite{GGI} that a fundamental matrix solution asymptotic to the formal solution in a sector of central opening angle $\pi+\varepsilon$ (the ana\-logous to our $Y_r(z,u)\sim Y_F(z,u)$) is holomorphic in a small neighbourhood of a coalescence point~$u^C$. This result, in case of Frobenius manifolds, is analogous of the first point of Theo\-rem~\ref{25marzo2018-1}. The proof in~\cite{GGI} is based on the Laplace transformation of the irregular system into an isomonodromic Fuchsian system, whose associated Pfaffian system is of Fuchsian type and has the good analyticity properties discussed in~\cite{YT}.
\end{Remark}

\appendix

\section[Weak and strong isomonodromic deformations of Fuchsian systems]{Weak and strong isomonodromic deformations \\ of Fuchsian systems}\label{appendixA}

The difference between weak and strong isomonodromic deformations naturally arises in case of Fuchsian systems. This is synonymous of Schlesinger and non-Schlesinger deformations studied by Bolibruch.

Up to a M\"obius transformation, we can assume that $z=\infty$ is non singular. Accordingly, we consider the $u$-family of $n\times n$ Fuschsian systems
\begin{gather} \label{25febbraio2018-3}
\frac{\dd Y}{\dd z}=\sum_{i=1}^N\frac{A_i(u)}{z-u_i}Y,\qquad \sum_{i=1}^NA_i(u)=0,
\end{gather}
depending holomorphically on the parameter $u=(u_1,\dots,u_N)$ in a polydisc $\mathbb{D}\big(u^0\big)$ with center $u^0=\big(u_1^0,\dots,u_N^0\big)$, contained in $\mathbb{C}^N\backslash \bigcup_{i\neq j}\{(u_i-u_j)=0\}$. We take $(z,u)$ in
 \begin{gather*}
 \mathcal{E}:=\left(\mathbb{P}^1\times \mathbb{D}\big(u^0\big)\right)\backslash \bigcup_{i=1}^N\left\{(z-u_i)=0\right\} .
 \end{gather*}
 Notice that $\mathbb{D}\big(u^0\big)=
\mathbb{D}_1\times \cdots \times \mathbb{D}_N$, where $\mathbb{D}_i$ is a disk centered at $u_i^0$ and $\overline{\mathbb{D}}_i\cap \overline{\mathbb{D}}_j=\emptyset$.
Thus, the fundamental group of $\mathbb{P}^1\backslash\{u_1,\dots,u_N\}$ can be defined in a $u$-independent way.\footnote{In other words, $ \mathcal{E}=\mathbb{P}^1\times \mathbb{D}\big(u^0\big)\backslash \bigcup_{i=1}^N\{(z-u_i)=0\}$ can be retracted to $\mathbb{P}^1\backslash \big\{u_1^0,\dots,u_N^0\big\}$, and $\pi_1\big( \mathcal{E}; \big(z_0,u^0\big)\big)$ is isomorphic to $\pi_1\big(\mathbb{P}^1\backslash \big\{u_1^0,\dots,u_N^0\big\};z_0\big)$.}

\begin{Definition} The family of systems \eqref{25febbraio2018-3} is weakly isomonodromic if there exists a family of fundamental matrix solutions $Y(z,u)$ having the same monodromy matrices for all $u\in \mathbb{D}(u_0)$. This family $Y(z,u)$ is called a weak isomonodromic family of fundamental matrices.
\end{Definition}

It follows from the theorem on analytic dependence on parameters that there exists a~fa\-mi\-ly of fundamental matrices $\widetilde{Y}(z,u)$ which is analytic in $(z,u)$ on the universal covering of~$ \mathcal{E}$, but which is not necessarily isomonodromic. Namely, $\widetilde{Y}(z,u)=Y(z,u)C(u) $ for some connection matrix $C(u)$. If $M_1,\dots,M_N$ are the monodromy matrices of $Y(z,u)$ w.r.t.\ a~basis of $\pi_1\big(\mathbb{P}^1\backslash \big\{u_1^0,\dots,u_N^0\big\};z_0\big)$, then $\widetilde{Y}(z,u)$ has non-constant matrices $M_j(u)=C(u)^{-1}M_j C(u)$. For a Fuchsian isomonodromic system, it can be proved that there exists an isomonodromic $Y(z,u)$ such that $C(u)$ is holomorphic. Namely:

\begin{Proposition}[\cite{Bolibruch,Bolibruch1}]\label{25febrraio2018-5} If \eqref{25febbraio2018-3} is weakly iso\-mo\-no\-dromic, then there exists an isomonodromic family of fundamental matrices $Y(z,u)$ which is holomorphic in $z$ and $u$ on the universal covering of $ \mathcal{E}$.
\end{Proposition}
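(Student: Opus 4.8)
The plan is to exploit the \emph{holomorphic} (but non-isomonodromic) family $\widetilde{Y}(z,u)$ supplied by the theorem on analytic dependence on parameters, and to correct it by a $z$-independent holomorphic gauge on the right. Since $\widetilde{Y}=YC$, its monodromy matrices $M_j(u)=C(u)^{-1}M_jC(u)$ are holomorphic on $\mathbb{D}\big(u^0\big)$, whereas the $M_j$ are the (constant) monodromy of the isomonodromic family $Y$. If I can produce an \emph{invertible holomorphic} matrix $P(u)$ on $\mathbb{D}\big(u^0\big)$, independent of $z$, such that
\begin{gather*}
M_j(u)P(u)=P(u)M_j,\qquad j=1,\dots,N,
\end{gather*}
then $Y_{\mathrm{new}}(z,u):=\widetilde{Y}(z,u)P(u)$ is again holomorphic on the universal covering of $\mathcal{E}$ (right multiplication by a $z$-independent holomorphic factor preserves holomorphy and invertibility), and computing its monodromy along a loop $\gamma_j$ gives $P(u)^{-1}M_j(u)P(u)=M_j$, constant. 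Thus $Y_{\mathrm{new}}$ is the desired holomorphic isomonodromic family, and everything reduces to constructing such a $P(u)$.

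First I would organise the intertwiners into a bundle. For each $u$ set
\begin{gather*}
V_u:=\big\{P\in\operatorname{Mat}_n(\mathbb{C})\colon M_j(u)P=PM_j,\ j=1,\dots,N\big\}=\operatorname{Hom}_{\pi_1}(\rho_0,\rho_u),
\end{gather*}
where $\rho_0$ and $\rho_u$ are the representations of $\pi_1\big(\mathbb{P}^1\backslash\big\{u_1^0,\dots,u_N^0\big\}\big)$ defined by $(M_j)$ and $(M_j(u))$. Each $V_u$ is the kernel of the linear map $P\mapsto\big(M_j(u)P-PM_j\big)_j$, which depends holomorphically on $u$. The crucial point, and the place where \emph{weak} isomonodromy enters, is that $\rho_u\cong\rho_0$ for every $u$: the (possibly non-holomorphic) conjugation shows $C(u)^{-1}\in V_u$ is invertible, so $\dim V_u=\dim\operatorname{End}_{\pi_1}(\rho_0)=\dim Z=:m$ is \emph{constant}, where $Z:=\{g\in GL_n\colon gM_j=M_jg\ \forall\, j\}$ is the centraliser of the monodromy group. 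A holomorphically varying family of linear maps of constant rank has holomorphic kernel, so the $V_u$ assemble into a holomorphic subbundle $\mathcal{V}$ of the trivial bundle $\mathbb{D}\big(u^0\big)\times\operatorname{Mat}_n$.

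Next I would pass to the invertible intertwiners $X_u:=V_u\cap GL_n$. These are non-empty (they contain $C(u)^{-1}$) and form a single right coset of $Z$, so the $X_u$ constitute a holomorphic principal $Z$-bundle $\mathcal{P}\to\mathbb{D}\big(u^0\big)$: near any point one extends an invertible intertwiner to a holomorphic section of $\mathcal{V}$ and uses openness of invertibility, while the transition functions take values in $Z$. Since $Z$ is a complex Lie group and the base is a polydisc, hence Stein and contractible, the Oka--Grauert principle guarantees that $\mathcal{P}$ is holomorphically trivial and therefore admits a global holomorphic section $P(u)$. This $P(u)$ is the invertible holomorphic intertwiner required above, and $Y_{\mathrm{new}}=\widetilde{Y}P$ completes the argument. (Equivalently, one may trivialise $\mathcal{V}$ to a holomorphic frame $P_1(u),\dots,P_m(u)$ and invoke the Oka principle to select holomorphic coefficients making $\sum_k f_k(u)P_k(u)$ nowhere singular.)

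The main obstacle is precisely this last, analytic, step: upgrading the pointwise conjugacy encoded by the non-holomorphic $C(u)$ to a genuinely holomorphic conjugator $P(u)$. The algebraic input (constancy of $\dim V_u$) is cheap and flows directly from the definition of weak isomonodromy, but converting the holomorphic torsor $\mathcal{P}$ into a global holomorphic section is where the geometry of the base is indispensable; it is essential that $\mathbb{D}\big(u^0\big)$ is a polydisc, so that the relevant cohomological obstruction vanishes. On a parameter space that is not Stein and contractible one cannot expect a holomorphic $P(u)$ in general, and this is exactly the subtlety that the references \cite{Bolibruch,Bolibruch1} address.
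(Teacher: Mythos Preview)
Your argument is correct and is precisely the kind of ``holomorphic bundles'' proof the paper alludes to: the paper does not supply its own proof of this proposition but simply states that ``the proof makes use of holomorphic bundles'' and refers to \cite{Bolibruch,Bolibruch1}. Your construction of the rank-$m$ intertwiner bundle $\mathcal{V}$, the passage to the $Z$-torsor of invertible intertwiners, and the appeal to Grauert's Oka principle over the contractible Stein polydisc $\mathbb{D}\big(u^0\big)$ is a clean and correct way to carry this out, and it is in the spirit of Bolibruch's argument (which likewise reduces the question to the triviality of a suitable holomorphic bundle over the parameter space). One cosmetic remark: when you write $Z:=\{g\in GL_n\colon gM_j=M_jg\ \forall\,j\}$ and then $\dim V_u=\dim Z$, you are implicitly identifying $\dim Z$ with the dimension of the linear centraliser $\{g\in\operatorname{Mat}_n\colon gM_j=M_jg\}$; this is of course the same number, but it is worth saying so explicitly.
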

The proof makes use of holomorphic bundles, and we refer to \cite{Bolibruch,Bolibruch1}.

\begin{Theorem}[\cite{Bolibruch,Bolibruch1}]\label{28febbraio2018-1}
The $u$-family of Fuschsian systems \eqref{25febbraio2018-3} is weakly isomonodromic if and only if there exists a matrix valued $1$-form $\omega$, holomorphic on $ \mathcal{E}$, such that
\begin{itemize}\itemsep=0pt
\item[$1)$] for any fixed $u\in \mathbb{D}\big(u^0\big)$ we have
\begin{gather*}\omega|_{u} = \sum_{i=1}^N\frac{A_i(u)}{z-u_i} \dd z;\end{gather*}
\item[$2)$] $\dd\omega=\omega\wedge \omega$.
\end{itemize}
\end{Theorem}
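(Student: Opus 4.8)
The plan is to establish the two implications separately, in both cases reusing the monodromy-invariance computation already carried out in \eqref{25marzo2018-4-MENO}--\eqref{25marzo2018-4}.

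For the direction ``$\Leftarrow$'', suppose a holomorphic $\omega$ on $\mathcal{E}$ satisfies (1) and (2). Condition (2) is exactly the complete Frobenius integrability $\dd\omega=\omega\wedge\omega$, so by the existence theorems for integrable Pfaffian systems \cite{Bolibruch0,Haraoka,IKSY,YT} there is a fundamental matrix $Y(z,u)$ holomorphic on the universal covering of $\mathcal{E}$ solving $\dd Y=\omega Y$. Fixing $u$ and reading off the $\dd z$-component, condition (1) gives $\partial_z Y=\big(\sum_i A_i(u)/(z-u_i)\big)Y$, so $Y(\cdot,u)$ is a fundamental solution of \eqref{25febbraio2018-3}. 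For a loop $\gamma_j$ about $z=u_j$ the pulled-back matrix $Y(\gamma_j z,u)$ is again a solution (because $\omega$ is single valued on $\mathcal{E}$), hence $Y(\gamma_j z,u)=Y(z,u)M_j$ for a monodromy matrix $M_j$; running the computation $\omega=\dd Y(\gamma_j z,u)\,Y(\gamma_j z,u)^{-1}=\omega+Y\big(\dd M_j M_j^{-1}\big)Y^{-1}$ forces $\dd M_j=0$. Thus every $M_j$ is independent of $u$, which is exactly weak isomonodromy.

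For the direction ``$\Rightarrow$'', assume the family is weakly isomonodromic. The key step is to invoke Proposition \ref{25febrraio2018-5} to replace the merely analytic family $\widetilde Y$ by an \emph{isomonodromic} family $Y(z,u)$ that is holomorphic on the universal covering of $\mathcal{E}$, and then to set $\omega:=\dd Y\,Y^{-1}$. Property (1) is immediate, since the $\dd z$-part of $\omega$ is $\partial_z Y\,Y^{-1}=\sum_i A_i/(z-u_i)$. Property (2) is the Maurer--Cartan identity: using $\dd(\dd Y)=0$ and $\dd(Y^{-1})=-Y^{-1}\dd Y\,Y^{-1}$ one computes $\dd\omega=\dd Y\wedge Y^{-1}\dd Y\,Y^{-1}=\omega\wedge\omega$. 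It remains to see that $\omega$ is single valued and holomorphic on $\mathcal{E}$: under a monodromy loop $Y\mapsto YM$ with $M$ constant in $u$ (this constancy is precisely isomonodromy), so $\omega\mapsto (\dd Y\,M)(M^{-1}Y^{-1})=\omega$, whence $\omega$ descends to $\mathcal{E}$, where it is holomorphic because $Y$ and $Y^{-1}$ are.

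The main obstacle is concentrated entirely in the ``$\Rightarrow$'' direction, and it is not the formal computations above but the appeal to Proposition \ref{25febrraio2018-5}. What that proposition supplies --- via Bolibruch's holomorphic-bundle argument --- is the holomorphy of the connection matrix $C(u)$, equivalently the existence of an isomonodromic fundamental family holomorphic on the universal covering of $\mathcal{E}$; this is exactly what guarantees that $\omega=\dd Y\,Y^{-1}$ is holomorphic on all of $\mathcal{E}$ rather than only on the covering. The constancy of the monodromy is what makes $\omega$ single valued, but without the holomorphic family one cannot conclude holomorphy of $\omega$, so this is the genuinely hard input.
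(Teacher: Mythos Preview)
Your proof is correct and follows essentially the same route as the paper's: both directions hinge on the monodromy-invariance computation \eqref{25marzo2018-4-MENO}--\eqref{25marzo2018-4}, with the ``$\Rightarrow$'' direction relying on Proposition~\ref{25febrraio2018-5} to supply a holomorphic isomonodromic family from which $\omega=\dd Y\,Y^{-1}$ is built, and the ``$\Leftarrow$'' direction using the general existence theory for integrable linear Pfaffian systems. Your write-up is somewhat more explicit (spelling out the Maurer--Cartan computation and the single-valuedness check), but the ideas and the identification of Proposition~\ref{25febrraio2018-5} as the substantive input match the paper exactly.
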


\begin{proof} Condition 2) is the Frobenius integrability condition for the Pfaffian system $\dd Y=\omega Y$. Condition 1) assures that the $z$ part of the Pfaffian system is the Fuchsian system~(\ref{25febbraio2018-3}). If the deformation is isomonodromic, then by Proposition~\ref{25febrraio2018-5} there is a holomorphic family $Y(z,u)$ and as in the case of~\eqref{25marzo2018-4-MENO}--\eqref{25marzo2018-4}, we see that $\omega=\dd Y Y^{-1}$ is single valued and holomorphic on~$ \mathcal{E}$. This also implies that the Pfaffian system is integrable, then 2) holds. Conversely, if 2) holds, then by the general properties of linear Pfaffian systems there exist a solution~$Y$, holomorphic on the universal covering of $ \mathcal{E}$, whose monodromy -- as in \eqref{25marzo2018-4-MENO}--\eqref{25marzo2018-4} -- is independent of~$u$.
\end{proof}

We are going to show below that for a non-resonant Fuchsian system (i.e., the eigenvalues of the matrices $A_i$ do not differ by non-zero integers) all weak isomonodromic deformations are actually strong, namely not only the monodromy matrices are constant, but also certain essential monodromy data do not depend on~$u$ (see Definition~\ref{26marzo2018-5} below). On the other hand, if the system is resonant, there exist both weak and strong isomonodromic deformations, which are inequivalent. To say it in other well known words, in the non-resonant case only {\it Schlesinger} deformations exist, while in the resonant case also {\it non-Schlesinger} deformations appear. The former are strong (preserving essential data), the latter are weak (preserving only monodromy matrices). First, let us recall Schlesinger deformations.

{\bf Schlesinger deformations.} For a Fuchsian system
\begin{gather}\label{25febbraio2018-9}
\frac{\dd Y}{\dd z}=\sum_{i=1}^N\frac{A_i^0}{z-u_i^0} Y,\qquad \sum_{i=1}^N A_i^0=0,
\end{gather}
it is always possible to consider its Schlesinger deformations, given by $\omega=\omega_s$, where
\begin{gather*}
\omega_s:=\sum_{i=1}^N \frac{A_i(u)}{z-u_i}\dd (z-u_i),\qquad A_i\big(u^0\big)=A_i^0.
\end{gather*}
Such deformations exist if and only if there exist $A_1(u), \dots, A_N(u)$ such that
\begin{gather*}
\dd\omega_s=\omega_s\wedge \omega_s, \qquad A_i\big(u^0\big)=A_i^0, \qquad \sum_{i=1}^N A_i(u)=0.
\end{gather*}
A standard computation shows that the condition $\dd\omega_s=\omega_s\wedge \omega_s$ is equivalent to the {\it Schlesinger equations}
 \begin{gather*}
 \dd A_i(u)= \sum_{j\neq i} [A_j(u),A_i(u)]\frac{\dd (u_j-u_i)}{u_j-u_i}.
 \end{gather*}
 This implies that \begin{gather*}\sum_{j=1}^N \frac{\partial A_i}{\partial u_j}=0,\end{gather*} which ensures that $\sum_i A_i(u)=0$ whenever $\sum_i A_i\big(u^0\big)=0$. The Schlesinger system is well known to be Frobenius integrable. We conclude that $\omega_s$, called {\it Schlesinger deformation} of the Fuchsian system (\ref{25febbraio2018-9}), always exists, including the resonant case (but it is not the unique one in this case). For an isomonodromic holomorphic fundamental matrix $Y_s(z,u)$ of $\omega_s$, we have
 \begin{gather*}
 \dd_u Y_s(\infty,u) Y_s(\infty,u)^{-1}=-\sum_{j=1}^\infty\left. \frac{A_i(u)}{z-u_i}\right|_{z=\infty} \dd u_i=0 ,
 \end{gather*}
 so that $
 Y_s(\infty,u)$ is a constant independent of $u$ (for example $ Y_s(\infty,u)=I$). For this reason, we say that the Schlesinger deformation $\omega_s$ is {\it normalized}.

{\bf Non-normalized Schlesinger deformations \cite{IKSY}.} Together with $\omega_s$, there always exist deformations
\begin{gather}\label{2agosto2018-2}
\omega =\omega_s +\sum_{i=1}^N \gamma_i(u)\dd u_i,
\end{gather}
with holomorphic matrix coefficients $\gamma_i(u)$ on $\mathbb{D}\big(u^0\big)$, which are called non-normalized. To see this, let $Y_s(z,u)$ be such that $Y_s(\infty,z)=I$ (or another constant matrix) and $\dd Y_s=\omega_s Y_s$. Then let
\begin{gather*}
\widehat{Y}(z,u):=\Gamma(u) Y_s(z,u),
\end{gather*}
for a holomorphically invertible matrix $\Gamma(u)$ on $\mathbb{D}\big(u^0\big)$ (so that $\widehat{Y}(\infty,u)=\Gamma(u)$ and the nor\-malization at~$\infty$ is lost). We have
\begin{gather*}
\dd\widehat{Y}=\Gamma \dd Y_s+\dd\Gamma Y_s=\Gamma \omega_s Y_s+\dd\Gamma Y_s\\
\hphantom{\dd\widehat{Y}}{}
=\left(\sum_{i=1}^N\frac{ \Gamma A_i \Gamma^{-1}}{z-u_i} \dd(z-u_i)+\sum_{i=1}^N \frac{\partial \Gamma}{\partial u_i} \Gamma^{-1}\dd u_i \right) \widehat{Y}=: \left(\widehat{\omega}_s+\sum_{i=1}^N \gamma_i(u) \dd u_i\right)\widehat{Y}.
 \end{gather*}
 Conversely, for any $\gamma_1,\dots,\gamma_N$ we need to prove that there exists an holomorphically invertible~$\Gamma$ such that $\dd\Gamma \Gamma^{-1}=\sum_i \gamma_i \dd u_i$, namely
 \begin{gather*}
 \frac{\partial \Gamma}{\partial u_i}=\gamma_i(u)\Gamma,\qquad i=1,\dots,N.
 \end{gather*}
 The integrability condition of this system is
 \begin{gather}\label{25febbraio2018-10}
 \partial_j\gamma_i+\gamma_i \gamma_j=\partial_i\gamma_j+\gamma_j \gamma_i.
\end{gather}
 On the other hand, by assumption, $\omega=\omega_s+\sum_i\gamma_i \dd u_i$ satisfies $\dd\omega=\omega\wedge \omega$. We have
 \begin{gather*}
 \dd\omega=\dd \omega_s +\sum_{ij}\partial_j\gamma_i \dd u_j\wedge \dd u_i =
 \dd \omega_s +\sum_{i<j} (\partial_i\gamma_j-\partial_j \gamma_i) \dd u_i\wedge \dd u_j,
 \end{gather*}
and
\begin{gather*}
\omega\wedge \omega =\omega_s\wedge \omega_s+ \sum_{i<j } (\gamma_i\gamma_j-\gamma_j\gamma_i) \dd u_i\wedge \dd u_j.
\end{gather*}
Using that $\dd \omega_s=\omega_s\wedge \omega_s$, we see that $\dd\omega=\omega\wedge \omega$ implies (\ref{25febbraio2018-10}).

The same considerations as above show that if $\omega$ is a 1-form for an isomonodromic family of Fuchsian systems, then any $\omega + \sum_i \gamma_i(u)\dd u_i$ is again responsible for an isomonodromic deformation.

Schlesinger deformations have the following property.

\begin{Proposition}\label{26febbraio2018-3}Consider the Pfaffian system
\begin{gather}\label{26febbraio2018-2}
\dd Y=\left(\sum_{i=1}^N\frac{A_i(u)}{z-u_i}\dd (z-u_i)+\sum_{i=1}^n \gamma_i(u)\dd u_i\right)Y,
\end{gather}
where the matrices $A_i(u)$ and $\gamma_i(u)$ are holomorphic in $\mathbb{D}\big(u^0\big)$. Then each $A_i(u)$ is holomorphically similar to a constant Jordan form~$J_i$. Namely, there exists~$G_i(u)$ holomorphic in $\mathbb{D}\big(u^0\big)$ such that $J_i=G_i(u)^{-1}A_i(u)G_i(u)$ is Jordan and independent of~$u$. In particular, the spectrum of each $A_i(u)$ is independent of~$u$, so that a Schlesinger isomonodromic deformation is isospectral.
\end{Proposition}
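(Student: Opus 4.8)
The plan is to mirror the proof of Proposition \ref{11marzo2018-7} componentwise. For each fixed index $i$ I would extract, from the Frobenius integrability condition $\dd\omega=\omega\wedge\omega$, a deformation equation for the residue $A_i$ of the form $\partial_{u_j}A_i=[\Omega^{(i)}_j,A_i]$, governed by a holomorphic matrix-valued $1$-form $\Omega^{(i)}=\sum_{j}\Omega^{(i)}_j\dd u_j$, and then conjugate $A_i$ to a constant Jordan form by a holomorphic fundamental solution of the auxiliary linear system $\dd G_i=\Omega^{(i)}G_i$.

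First I would write the components of the $1$-form in \eqref{26febbraio2018-2} as $\omega_0=\sum_{k}A_k/(z-u_k)$ (the coefficient of $\dd z$) and $\omega_j=-A_j/(z-u_j)+\gamma_j$ (the coefficient of $\dd u_j$), and impose \eqref{11marzo2018-3} for $\beta=0$, $\alpha=j$, namely $\partial_{u_j}\omega_0+\omega_0\omega_j=\partial_z\omega_j+\omega_j\omega_0$. Writing $\omega_0=A_i/(z-u_i)+R_i$ with $R_i=\sum_{k\neq i}A_k/(z-u_k)$ holomorphic at $z=u_i$, and taking the residue at $z=u_i$ (the coefficient of $(z-u_i)^{-1}$), the double poles and regular parts drop out and a short computation leaves
\begin{gather*}
\partial_{u_j}A_i=\big[\Omega^{(i)}_j,A_i\big],\qquad
\Omega^{(i)}_j=\begin{cases}\gamma_i+\sum_{k\neq i}\dfrac{A_k}{u_i-u_k}, & j=i,\\[1ex] \gamma_j-\dfrac{A_j}{u_i-u_j}, & j\neq i.\end{cases}
\end{gather*}
Since $u_i\neq u_j$ throughout $\mathbb{D}\big(u^0\big)$, each $\Omega^{(i)}_j$, hence the $1$-form $\Omega^{(i)}$, is holomorphic there.

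The crux, which I expect to be the main obstacle, is to show that $\dd G_i=\Omega^{(i)}G_i$ is Frobenius integrable on $\mathbb{D}\big(u^0\big)$, i.e.\ $\partial_{u_k}\Omega^{(i)}_j+\Omega^{(i)}_j\Omega^{(i)}_k=\partial_{u_j}\Omega^{(i)}_k+\Omega^{(i)}_k\Omega^{(i)}_j$; this is the analogue of condition \eqref{24marzo2018-3} in Lemma \ref{24marzo2018-2}. Merely differentiating the deformation equation and commuting mixed partials only shows that the curvature $2$-form of $\Omega^{(i)}$ commutes with $A_i$, which is not enough. The genuine vanishing must be read off from the $\dd u_j\wedge\dd u_k$ part of $\dd\omega=\omega\wedge\omega$, i.e.\ \eqref{11marzo2018-3} for $\alpha,\beta\geq1$. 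The delicate point is that $\Omega^{(i)}_j$ is the value of $\omega_j$ (and of the regular part of $\omega_0$) on the \emph{moving} divisor $z=u_i$, so extracting the condition requires tracking the chain-rule contribution of the moving evaluation point; equivalently, one restricts the flat logarithmic connection $\omega$ to the smooth divisor $\{z=u_i\}$ — where indeed $\omega=A_i\,\dd(z-u_i)/(z-u_i)+[\text{regular}]$ with residual connection exactly $\Omega^{(i)}$ — and invokes the standard fact that the induced residual connection is again flat and that the residue is parallel for it.

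Granted integrability, the conclusion follows exactly as in Proposition \ref{11marzo2018-7}. Take a fundamental matrix $\widetilde{G}_i(u)$ of $\dd G_i=\Omega^{(i)}G_i$, holomorphic on $\mathbb{D}\big(u^0\big)$. Using $\partial_{u_j}\widetilde{G}_i\,\widetilde{G}_i^{-1}=\Omega^{(i)}_j$ together with the deformation equation gives $\partial_{u_j}\big(\widetilde{G}_i^{-1}A_i\widetilde{G}_i\big)=\widetilde{G}_i^{-1}\big(\partial_{u_j}A_i-[\Omega^{(i)}_j,A_i]\big)\widetilde{G}_i=0$, so $\mathcal{A}_i:=\widetilde{G}_i^{-1}A_i\widetilde{G}_i$ is constant on $\mathbb{D}\big(u^0\big)$. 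Choosing a constant invertible $\mathcal{G}_i$ with $\mathcal{G}_i^{-1}\mathcal{A}_i\mathcal{G}_i=J_i$ in Jordan form and setting $G_i(u):=\widetilde{G}_i(u)\mathcal{G}_i$ yields a holomorphic $G_i$ with $G_i^{-1}A_iG_i=J_i$ constant. Isospectrality is then immediate, since the spectrum of $A_i(u)$ equals that of the constant $J_i$ for every $u$.
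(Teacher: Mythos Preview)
Your proposal is correct and ultimately proves the same thing as the paper, but the paper handles the ``moving divisor'' issue more efficiently. Instead of working in the original coordinates $(z,u)$ and then having to argue separately that the residual connection $\Omega^{(i)}$ on the moving divisor $\{z=u_i\}$ is flat, the paper simply changes variables to $x:=z-u_i$ (keeping $u_1,\dots,u_N$). In the coordinates $(x,u)$ the system becomes $\dd Y=(P_0(x,u)/x\,\dd x+\sum_j P_j(x,u)\,\dd u_j)Y$ with each $P_j$ holomorphic at $x=0$, and one checks that $P_j(0,u)$ coincides exactly with your $\Omega^{(i)}_j$. The divisor is now the \emph{fixed} hyperplane $\{x=0\}$, so the flatness of $\Omega^{(i)}=\sum_j P_j(0,u)\,\dd u_j$ is literally condition~\eqref{24marzo2018-3} of Lemma~\ref{24marzo2018-2}, and the conclusion follows verbatim from Proposition~\ref{11marzo2018-7}. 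In other words, the coordinate change $x=z-u_i$ is precisely the concrete implementation of the ``restrict the flat logarithmic connection to the divisor'' principle you invoke; it eliminates the chain-rule bookkeeping you flagged as the crux and reduces the whole proof to a one-line citation of the earlier results.
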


\begin{proof} Suppose we want to prove that $J_1=G_1(u)^{-1}A_1(u)G_1(u)$ is independent of $u$. As far as the local behaviour around $z-u_1=0$ is concerned, (\ref{26febbraio2018-2}) can be rewritten as
\begin{gather} \label{30luglio2018-1}
\dd Y=\left(\frac{P_0(x,u)}{x}\dd x+\sum_{i=1}^N P_i(x,u)\dd u_i\right) Y,
\end{gather}
where the $N+1$ independent variables are $ x:=z-u_1; u_1, u_2,\dots,u_N$. Each $P_k(x,u)$, for $k=0,1,\dots,N$, is holomorphic in a~neighbourhood of $x=z-u_1=0$. Explicitly
\begin{gather}
\frac{P_0}{x}= \frac{A_1(u)}{x}+\sum_{j=2}^N\frac{A_j(u)}{x-(u_j-u_1)},\nonumber\\
\sum_{i=1}^NP_i \dd u_i= \sum_{j=2}^N\frac{A_j(u)}{x-(u_j-u_1)}(\dd u_1-\dd u_j)+\sum_{j=1}^N\gamma_j(u)\dd u_j.\label{30luglio2018-2}
\end{gather}
 So, the proof repeats the arguments of the proofs of Lemma~\ref{24marzo2018-2} and Proposition~\ref{11marzo2018-7}. \end{proof}

We now go back to the most general $\omega$ in Theorem \ref{28febbraio2018-1}, which we are going to characterise. Let $Y(z,u)$ be a holomorphic weak isomonodromic fundamental matrix solution of $\dd Y=\omega Y$. Being in particular a solution of the Fuchsian system $\partial_z Y=\sum_i A_i(u)/(z-u_i)Y$, we write its Levelt form at $z=u_i$
\begin{gather}\label{28febbraio2018-5}
 Y(z,u)= \widehat{Y}_i(z,u) (z-u_i)^{D_i}(z-u_i)^{L_i(u)} C_i(u),
 \end{gather}
 where $C_i(u)$ is an invertible connection matrix, and
 \begin{itemize}\itemsep=0pt
\item[--] $\widehat{Y}_i(z,u)$ is holomorphically invertible at $z=u_i$, and $\widehat{Y}_i(z=u_i,u)=G_i(u)$ of Proposition~\ref{26febbraio2018-3}.
\item[--] $L_i(u)$ is block-diagonal $L_i=L_1^{(i)}\oplus\cdots\oplus L_{\ell_i}^{(i)}$, with upper triangular matrices $L_q^{(i)}$; each $L_q^{(i)}$ has only one eigenvalue $\sigma_q^{(i)}$ satisfying $0\leq \operatorname{Re} \sigma_q^{(i)} <1$, and $\sigma_p^{(i)}\neq \sigma_q^{(i)}$ for $1\leq p\neq q\leq \ell_i$.
\item[--] $D_i$ is a diagonal matrix of integers, which can be split into blocks ${D}_1^{(i)}\oplus\cdots\oplus
{D}_{\ell_i}^{(i)}$. The integers ${d}_{q,r}^{(i)}$ in each ${D}_q^{(i)}=\operatorname{diag}\big({d}_{q,1}^{(i)},{d}_{q,2}^{(i)},\dots\big)$ form a non-increasing (finite) sequence ${d}_{q,1}^{(i)}\geq {d}_{q,2}^{(i)}\geq \cdots$.

\item[--] The eigenvalues of $A_i$ are $\lambda_{q,s}^{(i)}:=\sigma_q^{(i)}+d_{q,s}^{(i)}$. Each block $L_q^{(i)}(u)$ corresponds to the eigenvalues of $A_i(u)$ which differ by non-zero integers.

\item[--] The following holds
\begin{gather*}
\lim_{z\to u_i} \big[ \widehat{Y}_i(z,u) \big(D_i +(z-u_i)^{D_i}L_i(u)(z-u_i)^{-D_i}\big) \widehat{Y}_i^{-1}(z,u)\big]=A_i(u).\end{gather*}
\end{itemize}

\begin{Lemma}[{\cite[Lemma 2]{Bolibruch1}}] $L_i(u)$ and $C_i(u)$ can be taken holomorphic on $\mathbb{D}\big(u^0\big)$, so that also each $\widehat{Y}_i(z,u)$ depends holomorphically on~$u$.
\end{Lemma}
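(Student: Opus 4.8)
The plan is to reduce the statement, separately at each singular point $z=u_i$, to the single Fuchsian point situation already settled in Proposition~\ref{30luglio 2018-3}. First I would fix an index $i$ and localise the Pfaffian system $\dd Y=\omega Y$ around $z=u_i$ through the change of variable $x:=z-u_i$, exactly as in the proof of Proposition~\ref{26febbraio2018-3}: in the independent variables $(x;u_1,\dots,u_N)$ the system becomes \eqref{30luglio2018-1}, with coefficients $P_0/x$ and $\sum_k P_k\,\dd u_k$ displayed in \eqref{30luglio2018-2}, all holomorphic near $x=0$. Hence it is a completely integrable Pfaffian system of Fuchsian type at $x=0$ whose residue is $A_i(u)$. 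By Proposition~\ref{26febbraio2018-3}, $A_i(u)$ is holomorphically similar to a constant Jordan form, so the spectral data at $z=u_i$ are $u$-independent and the Levelt exponents $D_i$, $L_i$ can be chosen constant.

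Next I would run the argument of Proposition~\ref{30luglio 2018-3} for this localised system, with $z$ replaced by $x$ and $u^0$ kept fixed. Theorem~5 of~\cite{YT} supplies a gauge transformation $Y=V(x,u)\widetilde Y$, holomorphic and represented by a convergent series in $(x,u)$ as in \eqref{31luglio2018-3}, reducing the system to the frozen differential system $\dd\widetilde Y=\big[\big(P_0\big(x,u^0\big)/x\big)\dd x\big]\widetilde Y$ in the variable $x$ alone, the analogue of \eqref{31luglio2018-4}. Choosing for the latter a Levelt solution \eqref{31luglio2018-5}, $\widetilde Y^{(0)}(x)=W(x)x^{D_i}x^{L_i}$ with constant exponents and $W(x)$ entire and invertible at $x=0$, I obtain
\begin{gather*}
Y_i^{(0)}(z,u):=V(x,u)W(x)\,(z-u_i)^{D_i}(z-u_i)^{L_i},
\end{gather*}
a fundamental solution of the \emph{full} Pfaffian system in Levelt form at $z=u_i$, whose regular factor $\widehat Y_i(z,u):=V(x,u)W(x)$ is holomorphic in $(z,u)$ near $z=u_i$ and whose exponents $D_i$, $L_i$ are constant; note that $\widehat Y_i\big(u_i,u\big)=V(0,u)W_0=G_i(u)$, consistently with Proposition~\ref{26febbraio2018-3}.

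Finally I would compare $Y_i^{(0)}$ with the given weak isomonodromic solution $Y$. Since both satisfy the same completely integrable system $\dd Y=\omega Y$, the standard computation of \eqref{25marzo2018-4-MENO}--\eqref{25marzo2018-4} applied to $C_i:=\big(Y_i^{(0)}\big)^{-1}Y$ gives $\dd C_i=0$ on the connected localising domain, so $C_i$ is a constant invertible matrix. Therefore
\begin{gather*}
Y(z,u)=\widehat Y_i(z,u)\,(z-u_i)^{D_i}(z-u_i)^{L_i}\,C_i
\end{gather*}
is a Levelt form \eqref{28febbraio2018-5} with constant (hence holomorphic) $L_i$ and $C_i$ and holomorphic $\widehat Y_i$, which is the assertion. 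The only non-elementary ingredient is the joint holomorphy in $(x,u)$ of the gauge $V$, which is precisely what Theorems~4,~5 and~7 of~\cite{YT} guarantee and which is the main obstacle; I stress that, because $A_i(u)$ need be neither diagonalisable nor non-resonant, it is essential here, as in Proposition~\ref{30luglio 2018-3}, that this reduction is insensitive to resonances.
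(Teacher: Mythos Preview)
Your argument proves too much, and the overreach pinpoints the gap. You conclude that $L_i$ and $C_i$ are \emph{constant}, not merely holomorphic. But by Proposition~\ref{28febbraio2018-6}, constancy of all $L_i$, $C_i$ is equivalent to the deformation being Schlesinger; your argument would therefore show that every weak isomonodromic deformation is Schlesinger, contradicting the explicit non-Schlesinger example at the end of Appendix~\ref{appendixA}, where $C_1(u)$ genuinely depends on $u$.

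The specific step that fails is the claim that the localised system at $z=u_i$ has the Fuchsian form \eqref{30luglio2018-1}--\eqref{30luglio2018-2} with $P_1,\dots,P_N$ holomorphic at $x=0$. Those formulae were derived in Proposition~\ref{26febbraio2018-3} for the Schlesinger form $\omega_s+\sum_i\gamma_i\,\dd u_i$; in the present lemma the form $\omega$ is the \emph{general} one of Theorem~\ref{28febbraio2018-1}, about which nothing is yet known beyond holomorphy on $\mathcal{E}$. As Theorem~\ref{25marzo2018-7} later shows, such an $\omega$ may carry extra terms $\sum_{j,k}\gamma_{ijk}(u)(z-u_i)^{-k}\,\dd u_j$ with poles of order up to $m_i$ at $z=u_i$. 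In that case the $u$-components of the localised system are \emph{not} holomorphic at $x=0$, the Yoshida--Takano reduction of \cite{YT} does not apply, and your conclusion $\dd C_i=0$ does not follow. The paper gives no proof of this lemma and simply defers to Bolibruch \cite[Lemma~2]{Bolibruch1}; his argument is different and does not rely on the Fuchsian structure of the $u$-part of $\omega$ at $z=u_i$.
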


The following lemma shows that a general deformation of Theorem \ref{28febbraio2018-1} is isospectral.

\begin{Lemma}\label{29marzo2018-4} For a weak isomonodromic deformation of a Fuchsian system, the diagonal of~$L_i(u)$ is constant and~$D_i$ is constant, so that the eigenvalues of each~$A_i(u)$ do not depend on~$u$.
\end{Lemma}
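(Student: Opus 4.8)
The plan is to read the local monodromy at $z=u_i$ off the Levelt form \eqref{28febbraio2018-5} and then use that, for a weak isomonodromic deformation, this monodromy is $u$-independent. Since $D_i$ has integer entries, analytic continuation of $(z-u_i)^{D_i}$ along a small positive loop $\gamma$ around $u_i$ contributes the factor $e^{2\pi i D_i}=I$, while $(z-u_i)^{L_i(u)}$ contributes $e^{2\pi i L_i(u)}$; as $\widehat{Y}_i(z,u)$ is single valued near $z=u_i$, \eqref{28febbraio2018-5} gives $Y(\gamma z,u)=Y(z,u)\,M_i$ with
\[
M_i=C_i(u)^{-1}e^{2\pi i L_i(u)}C_i(u).
\]
By the definition of weak isomonodromy the matrix $M_i$ is independent of $u$, hence so is its spectrum. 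This identity is the only computation, and it is immediate.

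First I would fix the diagonal of $L_i(u)$. The eigenvalues of $e^{2\pi i L_i(u)}$, which coincide with those of the constant matrix $M_i$, are precisely the numbers $e^{2\pi i \sigma_q^{(i)}(u)}$, where the $\sigma_q^{(i)}(u)$ are the block eigenvalues of $L_i(u)$ from the structural description following \eqref{28febbraio2018-5}. Since the eigenvalues of $M_i$ do not depend on $u$, each $e^{2\pi i\sigma_q^{(i)}(u)}$ is constant. The normalisation $0\le\operatorname{Re}\sigma_q^{(i)}<1$ makes $\sigma\mapsto e^{2\pi i\sigma}$ injective on that strip: if $e^{2\pi i\sigma}=e^{2\pi i\sigma'}$ then $\sigma-\sigma'\in\mathbb{Z}$, and the constraint on real parts forces $\sigma=\sigma'$. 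Hence each $\sigma_q^{(i)}$ is constant, and since every block $L_q^{(i)}$ is upper triangular with the single eigenvalue $\sigma_q^{(i)}$, the diagonal of $L_i(u)=L_1^{(i)}\oplus\cdots\oplus L_{\ell_i}^{(i)}$ equals $\bigoplus_q\sigma_q^{(i)}I$ and is therefore constant.

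Next I would deduce constancy of $D_i$ and of the spectrum of $A_i(u)$. By the structural list the eigenvalues of $A_i(u)$ are $\lambda_{q,s}^{(i)}(u)=\sigma_q^{(i)}+d_{q,s}^{(i)}(u)$ with $d_{q,s}^{(i)}(u)\in\mathbb{Z}$ and the $\sigma_q^{(i)}$ now known to be constant, so the whole spectrum of $A_i(u)$ lies in the fixed discrete set $\bigcup_q\big(\sigma_q^{(i)}+\mathbb{Z}\big)$. On the other hand $A_i(u)$ is holomorphic on the connected polydisc $\mathbb{D}\big(u^0\big)$, so the coefficients of its characteristic polynomial are holomorphic and its eigenvalues vary continuously (as a multiset) with $u$. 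A continuous multiset taking values in a discrete set over a connected domain is constant; therefore the spectrum of $A_i(u)$ is $u$-independent, each integer $d_{q,s}^{(i)}$ is constant, and so $D_i$ is constant.

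The only delicate point, and the step I expect to carry the content, is the second paragraph: one must be sure that the $e^{2\pi i\sigma_q^{(i)}(u)}$ are genuinely the eigenvalues of the $u$-independent $M_i$ and that the normalisation strip pins the $\sigma_q^{(i)}$ down uniquely. Everything after that is the standard fact that a continuous function into a discrete set on a connected domain is constant; here the holomorphy of $L_i(u)$ and $C_i(u)$ furnished by the preceding lemma guarantees the required continuity, although continuity alone would already suffice.
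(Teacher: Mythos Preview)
Your proof is correct and follows essentially the same line as the paper's: both identify the local monodromy as $M_i=C_i(u)^{-1}e^{2\pi i L_i(u)}C_i(u)$, use the $u$-independence of $M_i$ together with the normalisation $0\le\operatorname{Re}\sigma_q^{(i)}<1$ to pin down the $\sigma_q^{(i)}$ uniquely, and then invoke continuity of the eigenvalues of $A_i(u)$ to force the integers $d_{q,s}^{(i)}$ to be constant. The only cosmetic difference is that the paper phrases the first step via the normalised logarithm $\mathcal{L}_i:=C_i(u)^{-1}L_i(u)C_i(u)=\tfrac{1}{2\pi i}\log M_i$, concluding directly that $\mathcal{L}_i$ is constant, whereas you work with the exponential and invert it on the strip; these are equivalent.
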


\begin{proof} We rewrite \eqref{28febbraio2018-5} as
\begin{gather*}
 Y(z,u)= \widehat{Y}_i(z,u) (z-u_i)^{D_i}C_i(u)(z-u_i)^{\mathcal{L}_i},\\
 \mathcal{L}_i:=C_i(u)^{-1}L_i(u)C_i(u)\equiv \frac{1}{2\pi i } \log M_i, \qquad 0\leq \operatorname{Re} (\text{Eigenvls of }\mathcal{L}_i)<1.
 \end{gather*}
 Since $\dd M_i=0$, then $\dd\mathcal{L}_i=0$. Hence, the eigenvalues $\sigma_q^{(i)}$ of $L_i(u)$ are constant. Now, the eigenvalues $\lambda_{q,s}^{(i)}(u)=\sigma_q^{(i)}+d_{q,s}^{(i)}$ of $A_i(u)$ are continuous, being $A_i(u)$ continuous, so the integers~$d_{q,s}^{(i)}$ are constant.
 \end{proof}

We observe that if $C_i$ and $L_i$ are constant, then $\omega=\omega_s+\sum_i\gamma_i\dd u^i$, as in \eqref{2agosto2018-2}. Indeed, in this case we have
\begin{gather*}
\dd Y Y^{-1}=\dd\widehat{Y}_i \widehat{Y}_i^{-1}+ \widehat{Y}_i \frac{D_i +(z-u_i)^{D_i}L_i(z-u_i)^{-D_i}}{z-u_i} \widehat{Y}_i^{-1}\dd (z-u_i)\\
\hphantom{\dd Y Y^{-1}}{}= \widehat{Y}_i(u_i) \frac{D_i +\lim\limits_{z\to u_i}(z-u_i)^{D_i}L_i(z-u_i)^{-D_i}}{z-u_i} \widehat{Y}_i^{-1}(u_i)\dd(z-u_i) + \text{holomorphic} \\
\hphantom{\dd Y Y^{-1}}{} =\frac{A_i(u)}{z-u_i}\dd (z-u_i)+ \text{holomorphic}.
 \end{gather*}
The holomorphic part is regular at $z=u_i$, and moreover $\dd YY^{-1}=\dd \Gamma \Gamma^{-1}$ at $z=\infty$, having fixed $\Gamma(u):=Y(\infty,u)$. Thus, Liouville theorem yields $\omega=\omega_s+\dd\Gamma \Gamma^{-1}$, which is~\eqref{2agosto2018-2}. The converse of the above is also true:

\begin{Lemma} [\cite{Bolibruch0,Bolibruch1,KV,YT}]If the deformation is Schlesinger, i.e., $\omega=\omega_s$, or its non-normalized version~\eqref{2agosto2018-2}, then $C_i$ and $L_i$ are constant.
\end{Lemma}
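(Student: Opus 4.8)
The plan is to reduce the assertion to the local Fuchsian--Pfaffian theory already established for the system \eqref{11marzo2018-2}, by localising at each singularity $z=u_i$. First I would fix an index $i$ and pass to the coordinate $x:=z-u_i$, treating $(x,u_1,\dots,u_N)$ as independent variables. As recorded in \eqref{30luglio2018-1}--\eqref{30luglio2018-2}, the Schlesinger form $\omega_s$ -- and, by the same computation, its non-normalised version \eqref{2agosto2018-2} -- becomes in these coordinates a Pfaffian system that is Fuchsian at $x=0$, with residue $A_i(u)$ along $x=0$ and with all the $\dd u_k$-coefficients holomorphic at $x=0$. I would stress why this holomorphy holds: because $\omega_s$ is built from $d(z-u_i)$ rather than $\dd z$, the term $\frac{A_i}{z-u_i}\,d(z-u_i)=\frac{A_i}{x}\,\dd x$ carries no $\dd u_i$-component, so the would-be pole of the $\dd u_i$-direction at $x=0$ cancels; the remaining terms, together with the purely $u$-dependent correction $\sum_k\gamma_k(u)\,\dd u_k$ of \eqref{2agosto2018-2}, are manifestly regular at $x=0$.

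Next I would invoke Proposition~\ref{30luglio 2018-3} (equivalently, Theorems~4, 5 and~7 of \cite{YT}) for this localised system -- exactly the reduction already used in the proof of Proposition~\ref{26febbraio2018-3} to put $A_i(u)$ in constant Jordan form. It produces a fundamental matrix in Levelt form
\[
Y^{(i)}_{\mathrm{loc}}(z,u)=\widetilde{Y}_i(z,u)\,(z-u_i)^{D_i}(z-u_i)^{L_i},
\]
where $\widetilde{Y}_i$ is holomorphic in $(x,u)$ near $x=0$ and the exponents $D_i,L_i$ are constant in $u$. Since both $Y^{(i)}_{\mathrm{loc}}$ and the global weak isomonodromic solution $Y(z,u)$ solve one and the same Pfaffian system $\dd Y=\omega Y$ (the localised system in $(x,u)$ being the original one merely rewritten), they differ by a right factor killed by $\dd$, exactly as in the monodromy computation \eqref{25marzo2018-4-MENO}--\eqref{25marzo2018-4}, so that
\[
Y(z,u)=Y^{(i)}_{\mathrm{loc}}(z,u)\,K_i,\qquad \dd K_i=0,
\]
with $K_i$ a constant invertible matrix. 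Matching this Levelt form against \eqref{28febbraio2018-5} then lets us take $L_i(u)=L_i$ and $C_i(u)=K_i$, both constant; running the argument over all $i$ gives the claim (and recovers, as a bonus, the constancy of $D_i$ recorded in Lemma~\ref{29marzo2018-4}).

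The hard part will be the very first step: certifying that the deformation directions introduce no new pole at $x=0$, so that the holomorphic Fuchsian--Pfaffian machinery of \cite{YT} applies verbatim. This regularity is precisely what can fail for a general weak deformation of Theorem~\ref{28febbraio2018-1}, whose $\dd u_k$-coefficients may be singular at $z=u_i$, and it is exactly the structural feature that the use of $d(z-u_i)$ in $\omega_s$ guarantees. Once the local coefficients are seen to be holomorphic at $x=0$, everything that follows is the standard local Levelt analysis of Section~\ref{19marzo2018-1}.
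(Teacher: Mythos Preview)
Your proposal is correct and follows essentially the same route as the paper's own proof: localise at $x=z-u_i$, check that the $\dd u_k$-coefficients of the Schlesinger (or non-normalised Schlesinger) form are holomorphic at $x=0$, apply the Yoshida--Takano reduction to obtain a Levelt-form solution with constant exponents $D_i,L_i$, and then conclude $\dd C_i=0$ since both that solution and the global $Y(z,u)$ satisfy the same Pfaffian system. The paper phrases the middle step via the explicit gauge $Y=V(x,u)\widetilde{Y}$ of \cite[Theorem~5]{YT} reducing to the constant-coefficient system at $u^0$, whereas you invoke the packaged statement of Proposition~\ref{30luglio 2018-3}; these are the same mechanism.
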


\begin{proof} We prove that $L_1$ and $C_1$ are constant, being the other cases analogous. The Pfaffian system is rewritten in the form \eqref{30luglio2018-1}--\eqref{30luglio2018-2}. Then, by \cite[Theorem~5]{YT}, there exists a gauge transformation
\begin{gather*}
Y=V(x,u)\widetilde{Y},
\end{gather*}
holomorphic in a neighbourhood of $(x,u)=(0,u^0)$, given by a convergent series
\begin{gather*}
V(x,u)=I+\sum_{k_1+\cdots +k_N>0} V_k x^{k_0}\big(u_1-u_1^0\big)^{k_1}\cdots\big(u_N-u_N^0\big)^{k_N} ,
\end{gather*}
with $k=(k_0,k_1,\dots,k_N)$, such that the Pfaffian system is transformed to
\begin{gather*}
\dd \widetilde{Y}=
\left(\frac{P_0(0,u^0)}{x}\dd x\right) \widetilde{Y} \equiv \left[ \left(\frac{A_1\big(u^0\big)}{x}+\sum_{j=2}^N\frac{A_j\big(u^0\big)}{x-\big(u_j^0-u_1^0\big)}\right) \dd x\right] \widetilde{Y}.
\end{gather*}
Thus, $\widetilde{Y}$ satisfies an $n\times n$ Fuchsian system in variable $x$, independent of parameters (being $u^0$ fixed), so that there is a fundamental matrix solutions in Levelt form at $x=0$, independent of~$u$,
\begin{gather*}
\widetilde{Y}_1(x):=W(x) x^{D_1} x^{L_1} ,\qquad W(x) = W_0+\sum_{\ell=1}^\infty W_\ell x^\ell,
\end{gather*}
the series being convergent and holomorphic at $x=0$. Hence, there is a fundamental matrix solution of the initial system of the form
\begin{gather*}
Y_1(z,u) =V(x,u) W(x) x^{D_1} x^{L_1} \equiv \widehat{Y}_1(z,u) (z-u_1)^{D_1}(z-u_1)^{L_1},
\end{gather*}
where
\begin{gather*}
\widehat{Y}_1(z,u):= V(x,u) W(x) |_{x=z-u_1}\\
= \left(I+\sum_{k_1+\cdots +k_N>0} V_k (z-u_1)^{k_0}\big(u_1-u_1^0\big)^{k_1}\cdots\big(u_N-u_N^0\big)^{k_N}\right) \left(W_0+\sum_{\ell=1}^\infty W_\ell (z-u_1)^\ell\right).
\end{gather*}
The latter can be rewritten as a convergent saries at $z=u_1$ with coefficients holomorphic in $u$:
\begin{gather*}
\widehat{Y}_1(z,u)=G_1(u) \left(I+\sum_{\ell=1}^\infty \Psi_\ell(u)(z-u_1)^\ell\right),\qquad G_1(u)=V(0,u)W_0.
\end{gather*}
Notice that $G_1(u)$ puts $A_1(u)$ in Jordan form as in Proposition~\ref{26febbraio2018-3}. The proof is concluded observing that $Y_1(z,u)$ solves the initial Pfaffian system, so that also $Y(z,u)=Y_1(z,u)C_1$ is a~fundamental solution if and only if $C_1$ is constant. The proof can also be deduced directly from \cite[Theorems~4 and~7]{YT}.
\end{proof}

From the above, we have the following
\begin{Proposition}\label{28febbraio2018-6}The deformation in Theorem~{\rm \ref{28febbraio2018-1}} is Schlesinger, namely $\omega=\omega_s$, or its non-normalized version~\eqref{2agosto2018-2}, if and only if the Pfaffian system admits an isomonodromic fundamental matrix \eqref{28febbraio2018-5} such that $L_i$ and $C_i$ are constant.
\end{Proposition}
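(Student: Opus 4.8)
The plan is to recognise that this proposition simply packages, as a single equivalence, the two implications already worked out in the paragraphs immediately preceding it; the task is therefore to assemble them and to verify that the two constructions refer to one and the same fundamental matrix.

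First I would treat the direction asserting that constant $L_i$ and $C_i$ force $\omega$ to be of (non-normalized) Schlesinger type. Starting from a holomorphic weak isomonodromic family $Y(z,u)$, whose existence is guaranteed by Proposition~\ref{25febrraio2018-5}, I would write its Levelt form \eqref{28febbraio2018-5} at each $z=u_i$ and compute $\dd Y\,Y^{-1}$ locally. When $L_i$ and $C_i$ are $u$-independent, the computation displayed just above shows that near $z=u_i$ this $1$-form equals $A_i(u)\,\dd(z-u_i)/(z-u_i)$ plus a part holomorphic at $z=u_i$. Evaluating instead at $z=\infty$ gives $\dd Y\,Y^{-1}=\dd\Gamma\,\Gamma^{-1}$ with $\Gamma(u):=Y(\infty,u)$. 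Since $\omega=\dd Y\,Y^{-1}$ is globally holomorphic on $\mathcal{E}$ by Theorem~\ref{28febbraio2018-1}, Liouville's theorem then forces $\omega=\omega_s+\dd\Gamma\,\Gamma^{-1}$, i.e.\ the non-normalized form \eqref{2agosto2018-2}, reducing to $\omega=\omega_s$ exactly when $\Gamma$ is constant.

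For the converse, that a Schlesinger or non-normalized Schlesinger $\omega$ admits a Levelt fundamental matrix with constant $L_i$ and $C_i$, I would invoke the lemma of~\cite{Bolibruch0,Bolibruch1,KV,YT} established just above. Its argument localises \eqref{26febbraio2018-2} near each $z=u_i$ as in \eqref{30luglio2018-1}--\eqref{30luglio2018-2} and applies the gauge transformation of~\cite[Theorem~5]{YT} to reduce the Pfaffian system to a $u$-independent Fuchsian system in the local variable $x=z-u_i$; the resulting Levelt solution has exponents $D_i$, $L_i$ independent of $u$, and the connection matrix $C_i$ is then forced to be constant because $Y(z,u)=Y_i(z,u)C_i$ must still solve the Pfaffian system.

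The only point requiring care is that the two implications be phrased in terms of the \emph{same} isomonodromic fundamental matrix and the same choice of Levelt data; this is where I expect the (mild) obstacle to lie, since the Levelt form \eqref{28febbraio2018-5} is not unique and so a priori ``constant $L_i$, $C_i$'' could depend on the selection. I would resolve this by appealing to \cite[Lemma~2]{Bolibruch1}, which guarantees that $L_i(u)$ and $C_i(u)$ can be chosen holomorphically in $u$, together with Lemma~\ref{29marzo2018-4}, which fixes the diagonal of $L_i$ and the integers $D_i$ independently of the choice. With such a holomorphic, spectrally normalised Levelt form in hand, the two displayed characterisations match verbatim and the equivalence follows.
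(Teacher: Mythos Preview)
Your proposal is correct and matches the paper's own treatment: the proposition is stated immediately after the displayed computation (constant $L_i$, $C_i$ $\Rightarrow$ $\omega=\omega_s+\sum_i\gamma_i\,\dd u_i$) and the lemma of \cite{Bolibruch0,Bolibruch1,KV,YT} (Schlesinger $\Rightarrow$ constant $L_i$, $C_i$), and the paper's entire proof is the phrase ``From the above, we have the following''. Your extra paragraph addressing the non-uniqueness of the Levelt data is a reasonable scruple that the paper does not make explicit, but it is not needed for the argument as stated, since the proposition only asserts the \emph{existence} of such an isomonodromic fundamental matrix with constant $L_i$, $C_i$.
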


\begin{Definition}\label{26marzo2018-5}The {\rm essential monodromy data} associated with a weakly isomonodromic $Y(z{,}u)$ are the matrices
\begin{gather*}
D_j, \ L_j, \ C_j,\qquad j=1,\dots,N.
\end{gather*}
\end{Definition}

\begin{Definition} We call $Y(z,u)$ as {\rm strongly isomonodromic} fundamental matrix if the essential monodromy data are constant on $\mathbb{D}\big(u^0\big)$. Equivalently, each $Y_j(z,u):=\widehat{Y}_j(z,u) (z-u_j)^{D_j}(z-u_j)^{L_j}$ is a fundamental matrix of $\dd Y=\omega Y$.
\end{Definition}

\begin{Remark}Recall that the $D_j$ are always constant for a weak isomonodromic deformation (Lemma~\ref{29marzo2018-4}). Moreover, \begin{gather*}\dd C_j=0 \quad \text{implies} \quad \dd L_j=0,
\end{gather*}
because we can write the monodromy matrices as $M_j=C_j^{-1}e^{2\pi i L_j}C_j$. Thus, one can define a~strong deformation to be one such that all the matrices $C_j$ are constant, in the same way as in Definition \ref{18marzo2018-3} (the connection matrices $C_j$ play the same role of the matrices $H_r$).
\end{Remark}

Thus, we conclude that
\begin{Proposition}[Proposition \ref{28febbraio2018-6} revised]
An isomonodromic deformation is strong if and only if it is a Schlesinger deformation or its non-normalized version \eqref{2agosto2018-2}.
\end{Proposition}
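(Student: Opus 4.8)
The plan is to show that the two characterisations collapse into one, using the earlier results to dispose of the redundant data. First I would recall that, by Definition~\ref{26marzo2018-5}, the essential monodromy data are the triples $D_j, L_j, C_j$, and that a strong deformation is by definition one for which all of these are constant on $\mathbb{D}\big(u^0\big)$. The key reduction is that two of the three conditions are either automatic or redundant. By Lemma~\ref{29marzo2018-4}, the exponents $D_j$ are already constant for every weak isomonodromic deformation, so they impose no further constraint. By the Remark following Definition~\ref{26marzo2018-5}, the relation $M_j=C_j^{-1}e^{2\pi i L_j}C_j$ together with the constancy of the monodromy matrices $M_j$ shows that $\dd C_j=0$ forces $\dd L_j=0$. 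Hence a deformation is strong if and only if all the connection matrices $C_j$ are constant, which is the same as requiring all $C_j$ \emph{and} all $L_j$ to be constant.

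Having made this reduction, I would simply invoke Proposition~\ref{28febbraio2018-6}, which states that the deformation is of Schlesinger type (either $\omega=\omega_s$ or its non-normalised version~\eqref{2agosto2018-2}) precisely when the Pfaffian system admits an isomonodromic fundamental matrix in the Levelt form~\eqref{28febbraio2018-5} with constant $L_i$ and $C_i$. Chaining this equivalence with the reduction above yields the following: the deformation is strong, if and only if every $C_j$ is constant, if and only if every $C_j$ and $L_j$ is constant, if and only if the deformation is (possibly non-normalised) Schlesinger. This is exactly the asserted statement.

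There is no genuine obstacle to overcome here: the proposition is essentially a repackaging of Proposition~\ref{28febbraio2018-6} in the language of essential monodromy data, and all the analytic content---the holomorphic gauge transformation provided by Theorem~5 of~\cite{YT} that brings a Schlesinger deformation to a form with constant $L_i, C_i$---has already been supplied in the proof of Proposition~\ref{28febbraio2018-6} and the Lemma preceding it. The only point requiring a little care is that strongness refers to the full data $D_j, L_j, C_j$, whereas Proposition~\ref{28febbraio2018-6} refers only to $L_i, C_i$; the bridge is exactly the two observations---constancy of $D_j$ from Lemma~\ref{29marzo2018-4}, and the implication $\dd C_j=0\Rightarrow\dd L_j=0$ from the Remark---which make the $D_j$-condition vacuous and subsume the $L_j$-condition under the $C_j$-condition.
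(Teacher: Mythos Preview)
Your proposal is correct and follows exactly the reasoning the paper itself uses: the paper presents this proposition as an immediate consequence of Proposition~\ref{28febbraio2018-6} together with Lemma~\ref{29marzo2018-4} (constancy of $D_j$) and the Remark that $\dd C_j=0\Rightarrow\dd L_j=0$, which is precisely the reduction you carry out. There is no separate proof in the paper beyond ``Thus, we conclude that\ldots'', so your explicit chaining of the equivalences is a faithful expansion of the intended argument.
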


Following \cite{KV}, we may call $\mathcal{P}_i(z,u):=(z-u_i)^{D_i}(z-u_i)^{L_i(u)}C_i(u)$ the {\it principal part} of $Y(z,u)$ at $z=u_i$, so that
\begin{gather*}
Y(z,u)=\widehat{Y}_i(z,u) \mathcal{P}_i(z,u).
\end{gather*}
In case $C_i$ and $L_i$ are constant, then
 \begin{gather*}\mathcal{P}_i(z,u)\equiv \mathcal{P}_i(z-u_i),\end{gather*}
namely, $\mathcal{P}_i$ depends on $(z,u)$ only through the combination $(z-u_i)$. If such a dependence occurs at any $u_i$, the deformation is called {\it isoprincipal} in~\cite{KV}. The main result of \cite{KV} is that a~family of Fuchsian systems (\ref{25febbraio2018-3}) is an isoprincipal deformation if and only if the matrices $A_i$ satisfy the Schlesinger equations. This result is equivalent to Proposition~\ref{28febbraio2018-6}.

We are ready to review the most general form of $\omega$.

\begin{Theorem}[non-Schesinger deformations, Bolibruch \cite{Bolibruch,Bolibruch1}] \label{25marzo2018-7}
All the possible matrix differential $1$-form of Theorem~{\rm \ref{28febbraio2018-1}}, holomorphic on $\mathbb{P}^1 \times \mathbb{D}\big(u^0\big)\backslash \bigcup_{i=1}^n \{z-u_i=0\}$, which give an isomonodromic family of Fuchsian systems \eqref{25febbraio2018-3}, have the form
\begin{gather}\label{1marzo2018-1}
\omega =\omega_s +\sum_{i=1}^N \gamma_i(u) \dd u_i + \sum_{i=1}^N\sum_{j=1}^N\sum_{k=1}^{m_i} \frac{ \gamma_{ijk}(u)}{(z-u_i)^k}\dd u_j.
\end{gather}
where $\gamma_i(u)$, $\gamma_{ijk}(u)$ are holomorphic on $\mathbb{D}\big(u^0\big)$ and $m_i $ is the maximal integer difference of eigenvalues of~$A_i(u)$.
\end{Theorem}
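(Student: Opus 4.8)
The plan is to start from a holomorphic, single-valued isomonodromic fundamental matrix $Y(z,u)$, whose existence is guaranteed by Proposition~\ref{25febrraio2018-5}, and to read off $\omega=\dd Y\,Y^{-1}$ componentwise. Writing $\omega=\omega^{(0)}\dd z+\sum_j \omega^{(j)}\dd u_j$, condition~1) of Theorem~\ref{28febbraio2018-1} already fixes $\omega^{(0)}=\sum_i A_i/(z-u_i)$, so the entire problem is to determine the $z$-dependence of the matrices $\omega^{(j)}(z,u)=\partial_{u_j}Y\,Y^{-1}$. The first key observation is that each $\omega^{(j)}$ is single-valued and meromorphic in $z$ on $\mathbb{P}^1$: single-valuedness holds because the monodromy $M_i$ around $z=u_i$ is constant, so that under continuation $Y\mapsto Y M_i$ one gets $\partial_{u_j}Y\mapsto(\partial_{u_j}Y)M_i$ and hence $\omega^{(j)}\mapsto\omega^{(j)}$, exactly as in \eqref{25marzo2018-4-MENO}--\eqref{25marzo2018-4}; meromorphy and regularity away from the $u_i$ follow from the holomorphy of $\omega$ on $\mathcal{E}$. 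Thus each $\omega^{(j)}$ is a rational function of $z$ with possible poles only at $z=u_1,\dots,u_N$, and the task reduces to bounding the pole orders, identifying the principal parts, and computing the value at $z=\infty$.

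Next I would analyse $\omega^{(j)}$ near a fixed $z=u_i$ through the Levelt factorisation \eqref{28febbraio2018-5}, $Y=\widehat{Y}_i\,(z-u_i)^{D_i}(z-u_i)^{L_i}C_i$, in which $L_i(u)$, $C_i(u)$ and $\widehat{Y}_i(z,u)$ can be taken holomorphic in $u$ (by the cited lemma of Bolibruch) while $D_i$ is constant (Lemma~\ref{29marzo2018-4}). Setting $w=z-u_i$ and differentiating, the term $(\partial_{u_j}\widehat{Y}_i)\widehat{Y}_i^{-1}$ is holomorphic at $w=0$, so the singular behaviour is carried by $\widehat{Y}_i\,[\partial_{u_j}(w^{D_i}w^{L_i}C_i)(w^{D_i}w^{L_i}C_i)^{-1}]\widehat{Y}_i^{-1}$, and conjugation by the holomorphically invertible $\widehat{Y}_i$ leaves the pole order unchanged. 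Two mechanisms produce poles: for $j=i$ the identities $\partial_{u_i}w^{D_i}=-D_iw^{-1}w^{D_i}$ and $\partial_{u_i}\log w=-w^{-1}$ force a simple pole whose residue equals $-A_i$ (recovering the $\dd u_i$-part $-A_i/(z-u_i)$ of $\omega_s$); for all $j$ the contributions from $\partial_{u_j}C_i\,C_i^{-1}$ and $\partial_{u_j}L_i$ are conjugated by $w^{D_i}w^{L_i}$, scaling the $(a,b)$ entry by $w^{\lambda_a-\lambda_b}$ with $\lambda_a,\lambda_b$ eigenvalues of $A_i$, together with fractional-power and $\log w$ factors.

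The decisive point — and the step I expect to be the main obstacle — is to bound these conjugation poles by $m_i$. Since $\omega^{(j)}$ is single-valued, every fractional-power and $\log w$ contribution must cancel in the total, so only entries for which $\lambda_a-\lambda_b\in\mathbb{Z}$ can survive with a genuine pole; these are exactly the integer differences of eigenvalues of $A_i$, bounded in absolute value by $m_i$. Hence, after subtracting $\omega_s$ (which removes precisely the forced $-A_j/(z-u_j)$ simple pole of the $\dd u_j$-component), the matrix $(\omega-\omega_s)^{(j)}=\omega^{(j)}+A_j/(z-u_j)$ has at each $z=u_i$ a pole of order at most $m_i$, with Laurent coefficients holomorphic in $u$ because $\widehat{Y}_i$, $L_i$, $C_i$ and $D_i$ are. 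In particular, when $m_i=0$ there is no residual pole, recovering the purely Schlesinger (non-resonant) situation. Carrying this out rigorously requires controlling the interplay of the non-constant $L_i(u)$, $C_i(u)$ with the $w^{L_i}$-conjugation and verifying the cancellations order by order, which is the technical heart of the argument.

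Finally I would treat $z=\infty$. As $z=\infty$ is a regular point, $\Gamma(u):=Y(\infty,u)$ is holomorphic and invertible, so $\omega^{(j)}(\infty,u)=\partial_{u_j}\Gamma\,\Gamma^{-1}$ is finite and $(\omega-\omega_s)^{(j)}$ is regular at infinity. A Mittag--Leffler/Liouville argument then applies, exactly in the spirit of the discussion around \eqref{2agosto2018-2}: subtracting from $(\omega-\omega_s)^{(j)}$ its principal parts at all the $u_i$ leaves an entire matrix function of $z$ bounded at infinity, hence constant in $z$ and equal to $\gamma_j(u):=\partial_{u_j}\Gamma\,\Gamma^{-1}$. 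Collecting these pieces over $j$ reproduces \eqref{1marzo2018-1}, with $\sum_j\gamma_j(u)\dd u_j$ the entire part and the $\gamma_{ijk}(u)$ the holomorphic coefficients of the poles of order $k\le m_i$; the integrability $\dd\omega=\omega\wedge\omega$ holds automatically since $\omega=\dd Y\,Y^{-1}$. Conversely, any $\omega$ of this shape satisfying the integrability condition defines, by the general theory of linear Pfaffian systems invoked in Theorem~\ref{28febbraio2018-1}, an isomonodromic family, which closes the characterisation.
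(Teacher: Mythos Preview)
Your overall architecture is correct and close to the paper's: both rely on single-valuedness of $\omega^{(j)}=\partial_{u_j}Y\,Y^{-1}$, local analysis through the Levelt form near each $z=u_i$, and a Liouville-type collection at the end. The place where you diverge from the paper is the step you yourself flag as the ``main obstacle'': bounding the pole order at $z=u_i$ by $m_i$.

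You attack this directly, differentiating $\widehat{Y}_i\,(z-u_i)^{D_i}(z-u_i)^{L_i(u)}C_i(u)$ with $L_i$ and $C_i$ $u$-dependent, and then invoking single-valuedness of $\omega^{(j)}$ to kill the fractional powers and the $\log w$ polynomials arising from $w^{N_i}$ and from $\partial_{u_j}(w^{L_i})$. This works, but the execution is heavier than you indicate: $\partial_{u_j}C_i\,C_i^{-1}$ has \emph{no} block structure a priori, and $\partial_{u_j}(w^{L_i})$ requires a Duhamel-type formula since $L_i$ and $\partial_{u_j}L_i$ need not commute; you then have to argue entry by entry that single-valuedness forces each $(a,b)$ contribution with $a,b$ in distinct $\sigma$-blocks to vanish and each same-block contribution to be a monomial $c_{ab}\,w^{d_a-d_b}$ with no logarithm. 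Only then does $|d_a-d_b|\le m_i$ give the bound.

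The paper bypasses this computation by a comparison trick. It takes the Schlesinger deformation $\omega_s$ with the \emph{same} initial data $A_i(u^0)$ and its isomonodromic solution $Y_s$, whose Levelt exponents $L_i^s$, $C_i^s$ are \emph{constant} (Proposition~\ref{28febbraio2018-6}). Since $Y$ and $Y_s$ share the same monodromy, $\Gamma:=Y\,Y_s^{-1}$ is single-valued and meromorphic, and from $C_i(u)^{-1}L_i(u)C_i(u)=(C_i^s)^{-1}L_i^s C_i^s$ one gets $\mathcal{R}_i:=C_i(u)(C_i^s)^{-1}$ \emph{block-diagonal} (same block pattern as $D_i$). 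Rewriting $Y=\widehat{Y}_i\,w^{D_i}\mathcal{R}_i\,w^{L_i^s}C_i^s$, the only $u$-derivative landing in the singular part is $\widehat{Y}_i\,w^{D_i}\,\dd\mathcal{R}_i\,\mathcal{R}_i^{-1}\,w^{-D_i}\,\widehat{Y}_i^{-1}$, which involves only integer powers $w^{d_a-d_b}$ within blocks---no $w^{L_i}$-conjugation, no logarithms, and the bound $m_i$ is immediate. The price is that one must first know the existence and isoprincipality of the Schlesinger deformation; the gain is that the delicate cancellations you anticipate are replaced by a structural fact about $\mathcal{R}_i$.

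In short: your route is valid and more self-contained (it does not presuppose the Schlesinger theory), but the cancellation step needs to be written out carefully. The paper's route trades that analysis for the algebraic observation that $\mathcal{R}_i$ inherits the block structure of $L_i$, which makes the pole bound a one-line consequence of conjugation by $w^{D_i}$.
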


\begin{proof} The proof is in \cite{Bolibruch} and \cite{Bolibruch1}. We take the Schlesinger deformation $\omega_s$ with the same initial condition $A_i^0=A_i\big(u^0\big)$ of $\omega$, namely
\begin{gather*}
\omega_s|_{u^0}=\omega|_{u^0}.
\end{gather*}
Let $Y_s(z,u)$ be an isomonodromic fundamental matrix for $\omega_s$. By isomonodromicity, $Y(z,u)$ and $Y_s(z,u)$ have the same monodromy matrices, which are those of
\begin{gather*}\frac{\dd Y}{\dd z}=\sum\limits_{i=1}^N\frac{A_i^0}{z-u_i^0} Y.
\end{gather*}
Hence, $\Gamma(z,u):=Y(z,u) Y_s(z,u)^{-1}$ is single valued and meromorphic on $\mathbb{P}^1\times \mathbb{D}\big(u^0\big)$, with poles at $z=u_i$, $1\leq i \leq N$. Moreover $\omega = \dd\Gamma \Gamma^{-1} +\Gamma\omega_s \Gamma^{-1}$. This expression does show that $\omega$ has the structure~(\ref{1marzo2018-1}). In order to predict~$m_i$, we write
\begin{align*}
&Y(z,u)=\widehat{Y}_i(z,u) (z-u_i)^{D_i}(z-u_i)^{L_i(u)}C_i(u),
\\
&
Y_s(z,u)=\widehat{Y}_i^s(z,u) (z-u_i)^{D_i}(z-u_i)^{L_i^s}C_i^s,
\end{align*}
where $L_i^s$ and $C_i^s$ are constant, by Proposition \ref{28febbraio2018-6}. Being the monodromy $M_i$ of $Y$ and $Y_s$ the same, we have
\begin{gather*}
C_i(u)^{-1}L_i(u)C_i(u)={C_i^s}^{-1}L_i^sC_i^s.
\end{gather*}
 Thus,
 \begin{gather*}
 L_i(u)=\mathcal{R}_i L_i^s \mathcal{R}_i^{-1},
 \qquad
 \mathcal{R}_i:= C_i(u){C_i^s}^{-1},
 \end{gather*}
 and $\mathcal{R}_i$ must have the same block structure of $L_i(u)$ and $L_i^s$. Moreover,
 \begin{gather*}
 Y(z,u)=\widehat{Y}_i(z,u)(z-u_i)^{D_i} \mathcal{R}_i (z-u_i)^{L_i^s}C_i^s.
 \end{gather*}
 Thus, we have
\begin{gather*}
\dd Y Y^{-1} = \dd\widehat{Y}_i \widehat{Y}_i^{-1} +\widehat{Y}_i \left(\frac{D_i+(z-u_i)^{D_i}\mathcal{R}_i E_i^s \mathcal{R}_i^{-1}(z-u_i)^{-D_i}}{z-u_i}\right) \widehat{Y}_i^{-1} \dd(z-u_i)\\
\hphantom{\dd Y Y^{-1} =}{} +\widehat{Y}_i(z-u_i)^{D_i}\dd\mathcal{R}_i \mathcal{R}_i^{-1} (z-u_i)^{-D_i} \widehat{Y}_i^{-1}\\
\hphantom{\dd Y Y^{-1}}{} = \dd\widehat{Y}_i \widehat{Y}_i^{-1} +\widehat{Y}_i\left(\frac{D_i+(z-u_i)^{D_i}L_i(u) (z-u_i)^{-D_i}}{z-u_i}\right)\widehat{Y}_i^{-1}\dd(z-u_i)\\
\hphantom{\dd Y Y^{-1} =}{} +\widehat{Y}_i(z-u_i)^{D_i}\dd \mathcal{R}_i \mathcal{R}_i^{-1} (z-u_i)^{-D_i} \widehat{Y}_i^{-1}\\
\hphantom{\dd Y Y^{-1}}{} = \text{holomorphic}+ \frac{A_i(u)}{z-u_i} \dd(z-u_i) +\widehat{Y}_i (z-u_i)^{D_i}\dd\mathcal{R}_i \mathcal{R}_i^{-1} (z-u_i)^{-D_i} \widehat{Y}_i^{-1}.
\end{gather*}
The last term $\widehat{Y}_i (z{-}u_i)^{D_i}\dd\mathcal{R}_i \mathcal{R}_i^{-1} (z{-}u_i)^{-D_i} \widehat{Y}_i^{-1}$ contains matrix entries with poles at \smash{$(z{-}u_i){=}0$}, plus holomorphic terms. This proves again, by Liouville theorem, that $\omega$ has the struc\-tu\-re~(\ref{1marzo2018-1}). The block diagonal structure of $\mathcal{R}_i$, being the same as that of $D_i=D_1^{(i)}\oplus \cdots \oplus D_{\ell_i}^{(i)}$, assures that $m_i$ is the maximum over $q$ of the maximal difference of the eigenvalues of $D_q^{(i)}$, $q=1,2,\dots,\ell_i$. This is precisely the maximal integer difference of eigenvalues of $A_i(u)$.
 \end{proof}

In the non-resonant case, $m_i=0$ for every $i=1,\dots,N$. Therefore, we obtain the following
 \begin{Corollary} If for every $i=1,\dots,N$ the matrices $A_i$ are non-resonant, then every $\omega$ of Theorem~{\rm \ref{28febbraio2018-1}}, defining an isomonodromic family of Fuchsian systems~\eqref{25febbraio2018-3}, is of the form
 \begin{gather*}
 \omega=\omega_s+\sum_{i=1}^n \gamma_i(u)\dd u_i.
 \end{gather*}
 \end{Corollary}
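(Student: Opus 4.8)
The plan is to read off the Corollary as an immediate specialization of Theorem~\ref{25marzo2018-7}. That theorem already exhibits the most general isomonodromic $1$-form as \eqref{1marzo2018-1}, in which the only departure from the (possibly non-normalized) Schlesinger form $\omega_s+\sum_{i=1}^N\gamma_i(u)\,\dd u_i$ is the triple sum
\[
\sum_{i=1}^N\sum_{j=1}^N\sum_{k=1}^{m_i} \frac{\gamma_{ijk}(u)}{(z-u_i)^k}\,\dd u_j ,
\]
whose inner range $1\le k\le m_i$ is controlled entirely by the integers $m_i$. Thus the whole task reduces to proving that the non-resonance hypothesis forces $m_i=0$ for every $i$, which empties the $k$-sum and leaves precisely $\omega=\omega_s+\sum_{i=1}^N\gamma_i(u)\,\dd u_i$.

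First I would recall, from the proof of Theorem~\ref{25marzo2018-7}, the meaning of $m_i$: it is the maximal integer difference between two eigenvalues of $A_i(u)$, equivalently the maximal difference among the integer exponents $d_{q,s}^{(i)}$ occurring inside a single Levelt block $D_q^{(i)}$, maximized over $q=1,\dots,\ell_i$. The relevant structural input, already recorded in the description of the Levelt form \eqref{28febbraio2018-5}, is that each block $L_q^{(i)}$ collects exactly those eigenvalues $\lambda_{q,s}^{(i)}=\sigma_q^{(i)}+d_{q,s}^{(i)}$ of $A_i(u)$ that differ from one another by integers.

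Next I would invoke non-resonance. By hypothesis no two eigenvalues of $A_i$ differ by a nonzero integer; hence within any single block $L_q^{(i)}$ the eigenvalues share the fractional part $\sigma_q^{(i)}$ and may differ only by integers, so they must in fact all coincide. Equivalently, all the integers $d_{q,s}^{(i)}$ inside each $D_q^{(i)}$ are equal, so the maximal difference of the eigenvalues of $D_q^{(i)}$ is zero. Taking the maximum over $q$ gives $m_i=0$ for every $i=1,\dots,N$, and substituting this into \eqref{1marzo2018-1} annihilates the triple sum, yielding the claimed form.

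I do not expect a genuine obstacle here: once the general structure theorem is granted, the argument is purely bookkeeping. The only point demanding a moment's care is the identification of $m_i$ with an integer eigenvalue-gap of $A_i(u)$, but this identification was already established at the end of the proof of Theorem~\ref{25marzo2018-7}, so it may simply be cited rather than reproved.
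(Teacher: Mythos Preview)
Your proposal is correct and follows exactly the paper's approach: the paper derives the Corollary in a single line, observing that in the non-resonant case $m_i=0$ for every $i=1,\dots,N$, so that the triple sum in \eqref{1marzo2018-1} is empty. Your expansion of why non-resonance forces $m_i=0$ via the Levelt block structure is accurate and merely makes explicit what the paper leaves implicit.
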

 In other words, the isomonodromy deformation of a non-resonant Fuchsian system are only the Schlessinger deformations, possibly non-normalized. The term $\sum\limits_{i=1}^N\sum\limits_{j=1}^N\sum\limits_{k=1}^{m_i} \frac{ \gamma_{ijk}(u)}{(z-u_i)^k}\dd u_j$ can only appear in the resonant case. Summarizing, we have found that
 \begin{itemize}\itemsep=0pt
 \item[1)] in the non-resonant case, the definitions of weak and strong isomonodromic deformation are equivalent, and the Schlesinger equations are necessary and sufficient conditions;
 \item[2)] in the resonant case, there exist both weak isomonodromic deformations, determined by the form $\omega$ in Theorem~\ref{25marzo2018-7} and called non-Schlesinger, and strong isomonodromic deformations, which are Schlesinger deformations determined by $\omega_s$ (or its non-normalized version).
\end{itemize}

There are examples of non-Schlesinger deformations in \cite{BF, Bolibruch1,Kitaev}.\footnote{One can also have a look at \cite{Po}, though the question there is concerned with Schlesinger transformations that also shift the eigenvalues.} A simple example is given at the end of~\cite{KV}, and we report it here. Consider the family of differential systems \begin{gather*}
 \frac{\dd Y}{\dd z}
 =
 \left(
 \frac{1}{z-u}\left(
\begin{matrix}
1& 0
\\
0 & 0
\end{matrix}
\right)
+\frac{1}{z-1}
\left(
\begin{matrix}
-1 & \dfrac{-u(u-1)h(u)}{u-3}
\\
0 & 0
\end{matrix}
\right)
+\frac{1}{z-2}
\left(
\begin{matrix}
0 & \dfrac{2u(u-2)h(u)}{u-3}
\\
0 & 1
\end{matrix}
\right)
\right.
\\
\left.\hphantom{\frac{\dd Y}{\dd z}=}{}
+\frac{1}{z-3}
\left(
\begin{matrix}
0& -uh(u)
\\
0 & -1
\end{matrix}
\right)
 \right)Y.
 \end{gather*}
 A fundamental matrix solution is
 \begin{gather*}Y(z,u) =
\left(
\begin{matrix}
\dfrac{z-u}{z-1} & \dfrac{-2t(z-u)h(u)}{(z-1)(z-3)(u-3)}\\
0 & \dfrac{z-2}{z-3}
\end{matrix}
\right).
 \end{gather*}
 Here $h(u)$ is an arbitrary function, which is holomorphic at $u=0$. The system is resonant at all the Fuchsian singularities $z=u,1,2,3$. One can check that the residue matrices do not satisfy the Schlesinger equations, but the $u$-deformation is isomonodromic in the weak sense, because~$Y(z,u)$ is single-valued and hence has trivial monodromy matrices $M_i=I:=\operatorname{diag}(1,1)$. For $z$ close to $u$, we have
\begin{gather*}
Y(z,u)=\left(
\begin{matrix}
\dfrac{1}{u-1} & 0
\\
0 & \dfrac{u-2}{u-3}
\end{matrix}
\right) (I+O(z-u)) \left(
\begin{matrix}
z-u & 0
\\
0 & 1
\end{matrix}
\right),
\end{gather*}
with {\it constant} connection matrix $=I$. This is not the case at other poles of the system. For example, for $z$ close to 1, we have
\begin{gather*}
Y(z,u)= \left(
\begin{matrix}
\dfrac{u(1-u)h(u)}{u-3} & 1
\\
1 & 0
\end{matrix}
\right) (I+O(z-1)) \left(
\begin{matrix}
1& 0
\\
0 & (z-1)^{-1}
\end{matrix}
\right) \left(
\begin{matrix}
0 & 1/2
\\
1-u & \dfrac{u(1-u)h(u)}{u-3}
\end{matrix}
\right),
\end{gather*}
with $u$-dependent connection matrix
\begin{gather*}
C_1(u)=\left(
\begin{matrix}
0 & 1/2
\\
1-u & \dfrac{u(1-u)h(u)}{u-3}
\end{matrix}
\right).
\end{gather*}
So, the deformation is not isomonodromic in the strong -- or isoprincipal -- sense. The monodromy matrix is nevertheless constant and equal to $I$.

As noted in \cite{KV}, the example shows that a weak isomonodromic deformation may not have the Painlev\'e property. Indeed, in the above example we can choose~$h(u)$ arbitrarily (provided it is holomorphic at $u=0$).
On the other hand, if the deformation is isomonodromic in the strong sense, namely Schlesinger, then the Painlev\'e property holds also in the resonant case.

\section{Proof of Proposition \ref{16marzo2018-1}}\label{appendixB}

We recall that a small polydisc $\mathbb{D}^\prime$ has been introduced by Sibuya to guarantee that the fundamental matrices $Y_r(z,u)$ in~\eqref{11marzo2018-10-BIS} are holomorphic of~$u$.

\begin{Lemma}\label{16marzo2018-2} Let $\omega_0(x)=\big(\Lambda+\frac{A}{z}\big)$ be as in~\eqref{11marzo2018-6}, let $\omega_1,\dots,\omega_n$ be holomorphic of $(z,u)\in \mathbb{C}\times \mathbb{D}\big(u^0\big)$, and~$A(u)$ holomorphic on $\mathbb{D}\big(u^0\big)$. Assume that $z=\infty$ is at most a pole of $\omega_1,\dots,\omega_n$. Then, for $u\in \mathbb{D}^\prime$, the Pfaffian system has the following structure
\begin{gather*}
\omega=\left(\Lambda+\frac{A}{z}\right)\dd z +\sum_{j=1}^n ( zE_j+\omega_j(0,u)) \dd u_j ,
 \end{gather*}
 with
\begin{gather*} %\label{15marzo2018-7}
 \omega_j(0,u)=[F_1(u),E_j]+\mathcal{D}_j(u),
\end{gather*}
 where $F_1$, appearing in the formal expansion \eqref{14marzo2018-1}, is given explicitly in \eqref{18marzo2018-8}, and
\begin{gather*}
 \mathcal{D}_j(u)=\frac{\partial H_r(u)}{\partial u_j}H_r(u)^{-1}
\end{gather*}
 is a diagonal matrix independent of $r\in\mathbb{Z}$ and holomorphic on $u\in \mathbb{D}^\prime$. Moreover,
 \begin{gather*}\dd B=0. \end{gather*}
\end{Lemma}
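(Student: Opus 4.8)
The plan is to start from the defining relation $\omega_j=\partial_{u_j}Y\,Y^{-1}$, valid for any fundamental solution of the Pfaffian system, and to exploit the factorisation \eqref{14marzo2018-4}, $Y=Y_rH_r$. Setting $\mathcal{D}_j:=\partial_{u_j}H_r\,H_r^{-1}$ (a matrix depending on $u$ only), this yields
\begin{gather*}
\omega_j=\partial_{u_j}Y_r\,Y_r^{-1}+Y_r\,\mathcal{D}_j\,Y_r^{-1}.
\end{gather*}
The hypotheses (holomorphy on $\mathbb{C}\times\mathbb{D}^\prime$, with $z=\infty$ at most a pole) already force each $\omega_j$ to be a polynomial in $z$, so the whole argument consists in identifying this polynomial by matching it against the asymptotic expansion of the right-hand side as $z\to\infty$ in $\mathcal{S}_r(\overline{\mathbb{D}^\prime})$.

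First I would expand $\partial_{u_j}Y_r\,Y_r^{-1}$ from $Y_r=\widehat{Y}_r z^{B}e^{z\Lambda}$. Since $B$, $E_j$ and $\partial_{u_j}B$ are diagonal and commute with $\Lambda$, conjugation by $z^Be^{z\Lambda}$ produces no exponentials; using $\partial_{u_j}\Lambda=E_j$, $\partial_{u_j}z^{B}=(\log z)(\partial_{u_j}B)z^{B}$ and $\widehat{Y}_r\sim F=I+F_1/z+\cdots$ (uniformly, by Sibuya) one gets
\begin{gather*}
\partial_{u_j}Y_r\,Y_r^{-1}=zE_j+[F_1,E_j]+(\log z)\,\partial_{u_j}B+\mathcal{O}\big(z^{-1}\big).
\end{gather*}
Here $F_1$ is the leading coefficient of the formal solution, hence the matrix given explicitly by \eqref{18marzo2018-8} upon substituting $Y_F$ into \eqref{11marzo2018-1}.

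The heart of the matter is the analysis of $Y_r\,\mathcal{D}_j\,Y_r^{-1}$. Splitting $\mathcal{D}_j$ into its diagonal part $\Delta_j$ and its off-diagonal part $N_j$, the diagonal part commutes with $z^Be^{z\Lambda}$ and contributes $\Delta_j+\mathcal{O}(z^{-1})$, while the $(a,b)$-entry of $N_j$ generates a factor $e^{z(u_a-u_b)}$. Because each sector $\mathcal{S}_r(u)$ has central opening greater than $\pi$, and all eigenvalues are distinct on $\mathbb{D}^\prime$, every such exponential is unbounded somewhere in $\mathcal{S}_r(u)$; since $\omega_j$ is a polynomial and $\partial_{u_j}Y_r\,Y_r^{-1}$ carries no exponentials, no such term can survive, forcing $N_j=0$. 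Thus $\mathcal{D}_j=\Delta_j$ is diagonal, and substituting back,
\begin{gather*}
\omega_j=zE_j+[F_1,E_j]+\mathcal{D}_j+(\log z)\,\partial_{u_j}B+\mathcal{O}\big(z^{-1}\big),\qquad z\to\infty\ \text{in}\ \mathcal{S}_r\big(\overline{\mathbb{D}^\prime}\big).
\end{gather*}
A polynomial's expansion at infinity contains neither a $\log z$ term nor negative powers: the former gives $\partial_{u_j}B=0$, i.e.\ $\dd B=0$, and the $\mathcal{O}(z^{-1})$ remainder, being a polynomial vanishing at $\infty$, is identically zero. Hence $\omega_j=zE_j+\omega_j(0,u)$ with $\omega_j(0,u)=[F_1,E_j]+\mathcal{D}_j$. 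Independence of $r$ follows since $\mathcal{D}_j=\omega_j(0,u)-[F_1,E_j]$ and neither $\omega_j$ nor $F_1$ depends on $r$, while holomorphy of $\mathcal{D}_j$ on $\mathbb{D}^\prime$ follows from that of the invertible connection matrices $H_r(u)$.

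The step I expect to be the main obstacle is the pair of Liouville-type eliminations, of the off-diagonal exponentials and of the $\log z$ term. Both rest on the fact that the asymptotics $\widehat{Y}_r\sim F$ hold uniformly on the full sector $\mathcal{S}_r(\overline{\mathbb{D}^\prime})$ of opening greater than $\pi$, and that this expansion may be differentiated in $u$ so as to control $\partial_{u_j}\widehat{Y}_r$; these are precisely the contents of Sibuya's theorem recalled in Section~\ref{19marzo2018-1}, and it is the opening being larger than $\pi$ that guarantees no exponential can remain bounded throughout the sector.
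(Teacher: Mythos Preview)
Your proof is correct and follows essentially the same route as the paper's: use $Y=Y_rH_r$ to write $\omega_j$ in terms of $\widehat{Y}_r$, the diagonal factor $z^Be^{z\Lambda}$, and $\mathcal{D}_j=\partial_{u_j}H_r\,H_r^{-1}$; then exploit the sector opening $>\pi$ to kill off-diagonal exponentials, the absence of $\log z$ in a rational function to force $\dd B=0$, and a Liouville argument to pin down the polynomial exactly. The only organisational difference is that you invoke Liouville at the outset (entire with pole at $\infty$ $\Rightarrow$ polynomial) and then match coefficients, whereas the paper first records the behaviour at $z=0$ via the Levelt form $Y^{(0)}$ (equation~\eqref{14marzo2018-8}) and applies Liouville at the end; these are the same argument in a different order.
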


\begin{proof} We write
\begin{gather*}
\omega =\dd Y Y^{-1}.
\end{gather*}
where $Y(z,u)=Y^{(0)}(z,u) H$ is a holomorphic isomonodromic family. Recalling that $D$ is constant and $\dd L=\dd H=0$ (one can take directly $H=I$), we have
\begin{gather*}
\omega_j=\frac{\partial Y}{\partial u_j} Y^{-1} =\frac{\partial \widehat{Y}^{(0)}}{\partial u_j} \big(\widehat{Y}^{(0)}\big)^{-1}= \operatorname{holom}(z,u),
 \end{gather*}
where $\operatorname{holom}(z,u)$ is some convergent Taylor series at $z=0$, of order $\mathcal{O}(1)$ for $z\to 0$, with holomorphic coefficients depending on $u\in\mathbb{D}\big(u^0\big)$. Keeping into account the explicit form \eqref{3aprile2018-1} of $\widehat{Y}^{(0)}$ (in particular, recall that $G(u)$ satisfies \eqref{23marzo2018-7} as in Proposition \ref{11marzo2018-7}), we have
\begin{gather}
\label{14marzo2018-8}
\omega_j=\partial_jG G^{-1}+\mathcal{O}(z) \equiv \omega_j(0,u)+\mathcal{O}(z).
\end{gather}

Successively, we compute the representation of $\omega_j$ close to $z=\infty$, using \eqref{14marzo2018-4},
\begin{align*}
\omega_j&=\frac{\partial Y}{\partial u_j} Y^{-1}\\
&=\frac{\partial \widehat{Y}_r}{\partial u_j} \widehat{Y}_r^{-1}+\widehat{Y}_r\frac{\partial B}{\partial u_j} \widehat{Y}_r^{-1} \log z + z\widehat{Y}_rE_j\widehat{Y}_r^{-1}
+\widehat{Y}_rz^B\left(e^{z\Lambda}\frac{\partial H_r}{\partial u_j}H_r^{-1} e^{-z\Lambda}\right)z^{-B}\widehat{Y}_r^{-1}.
\end{align*}
We recall that $u_i\neq u_j$ for $i\neq j$ and observe that $e^{(u_i-u_j)z}$ diverges for $z\to\infty$ in some subsector of $\mathcal{S}_r(\overline{ \mathbb{D}^\prime})$, because the central angular opening is more than~$\pi$. Hence
\begin{gather*}
e^{z\Lambda}\frac{\partial H_r}{\partial u_j}H_r^{-1} e^{-z\Lambda} \quad\text{does not diverge exponentially}
\quad \Longleftrightarrow \quad \frac{\partial H_r}{\partial u_j}H_r^{-1} \quad \text{is diagonal}.\end{gather*}
 Since $\Lambda$ and $B$ are also diagonal, we have
\begin{gather*}
\omega_j=\left(\frac{\partial \widehat{Y}_r}{\partial u_j} \widehat{Y}_r^{-1}+z\widehat{Y}_r E_j \widehat{Y}_r^{-1}+\widehat{Y}_r\frac{\partial H_r}{\partial u_j}H_r^{-1} \widehat{Y}_r^{-1}\right)
+\widehat{Y}_r\frac{\partial B}{\partial u_j} \widehat{Y}_r^{-1} \log z.
\end{gather*}
We have at most a pole at $z=\infty$ if and only if
\begin{gather*}
\frac{\partial B}{\partial u_j}=0.
\end{gather*}
Then, the asymptotic behaviour \eqref{14marzo2018-1} of $\widehat{Y}_r$ implies that
\begin{gather*}
\omega_j=z\widehat{Y}_r E_j \widehat{Y}_r^{-1}
+\widehat{Y}_r\frac{\partial H_r}{\partial u_j}H_r^{-1} \widehat{Y}_r^{-1}
+\frac{\partial \widehat{Y}_r}{\partial u_j} \widehat{Y}_r^{-1}=
zE_j+[F_1,E_j]+\frac{\partial H_r}{\partial u_j}H_r^{-1}+\mathcal{O}\left(\frac{1}{z}\right)
\end{gather*}
in the sector $\mathcal{S}_r(\overline{ \mathbb{D}^\prime})$. The asymptotics holds for every $r\in\mathbb{Z}$. Now, the asymptotic expansion of a function (in our case $\omega_j$) on a sector is unique. Since $\mathcal{S}_r(\overline{ \mathbb{D}^\prime})\cap \mathcal{S}_{r+1} (\overline{\mathbb{D}^\prime})$ is not empty, this implies that $\omega_j$ has the same expansion in every sector $\mathcal{S}_r(\overline{ \mathbb{D}^\prime})$, $\forall\, r$. In particular, $\partial_jH_r H_r^{-1}$ cannot depend on $r$. We set
\begin{gather*}
 \mathcal{D}_j(u):=\frac{\partial H_r(u)}{\partial u_j}H_r^{-1}(u) \qquad \forall\, r\in\mathbb{Z}.
 \end{gather*}
Moreover, being $\omega_j$ single valued, the asymptotic series represented by $\mathcal{O}\big(\frac{1}{z}\big)$ above, must be a~convergent Taylor expansion. In conclusion
\begin{gather*}
\omega_j-(zE_j+[F_1,E_j]+\mathcal{D}_j)=\mathcal{O}\left(\frac{1}{z}\right) \qquad \text{is holomorphic at $z=\infty$}.
\end{gather*}
Keeping into account \eqref{14marzo2018-8} and Liouville theorem, we obtain that
\begin{gather*}
\omega_j-(zE_j+[F_1,E_j]+\mathcal{D}_j)=0,
\end{gather*}
as we wanted to prove.
\end{proof}

\begin{Lemma}\label{16marzo2018-3}Consider a Pfaffian system $\dd Y=\omega Y$, and assume that $\omega$ has the following structure
\begin{gather*}\omega=\left(\Lambda+\frac{A(u)}{z}\right)\dd z+\sum_{j=1}^n ( zE_j+\omega_j(0,u) )\dd u_j,
\end{gather*}
 where $A(u)$ and $\omega_1(0,u),\dots,\omega_n(0,u)$ are holomorphic on $\mathbb{D}\big(u^0\big)$. Then, the Frobenius integrability conditions \eqref{11marzo2018-3} are equivalent to $1)$ and $2)$ below:
 \begin{itemize}\itemsep=0pt
 \item[$1)$] $\omega_j(0,u)$ satisfies the constraint
 \begin{gather} \label{15marzo2018-1}
 [\Lambda,\omega_j(0,u)]=[E_j,A];
 \end{gather}
 \item[$2)$] $A(u)$ satisfies the non-linear system
\begin{gather} \label{15marzo2018-2}
\frac{\partial A}{\partial u_j}=[\omega_j(0,u),A],\qquad j=1,2,\dots,n,
 \end{gather}
which is Frobenius integrable.
\end{itemize}
The above \eqref{15marzo2018-1} determines $\omega_j(0,u)$ up to a diagonal matrix $\mathcal{D}_j(u)$, as follows
\begin{gather}\label{15marzo2018-3}
\omega_j(0,u)=\left( \frac{A_{ab}(u)(\delta_{aj}-\delta_{bj})}{u_a-u_b} \right)_{a,b=1}^n+\mathcal{D}_j(u).
\end{gather}
Moreover, the diagonal matrix $B:=\operatorname{diag}(A_{11},\dots,A_{nn})$ is constant and
 \begin{gather*}
 \mathcal{D}_j=\frac{\partial \mathcal{D}}{\partial u_j},
 \end{gather*}
 where $\mathcal{D}$ is a matrix whose diagonal only depends on $u\in \mathbb{D}\big(u^0\big)$.
\end{Lemma}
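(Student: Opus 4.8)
The plan is to insert the explicit ansatz $\omega_0=\Lambda+A/z$ and $\omega_j=zE_j+\omega_j(0,u)$ into the Frobenius conditions \eqref{11marzo2018-3} and to reduce them, pair by pair, to algebraic and differential relations on $A$ and on $\omega_j(0,u)$ by collecting powers of $z$. First I would treat $\alpha=j\geq1$, $\beta=0$, which is exactly \eqref{24marzo2018-1} with $\partial_z\omega_j=E_j$: the coefficient of $z^1$ is $[\Lambda,E_j]=0$ (automatic), the coefficient of $z^0$ is precisely the constraint \eqref{15marzo2018-1}, namely $[\Lambda,\omega_j(0,u)]=[E_j,A]$, and the coefficient of $z^{-1}$ is the non-linear system \eqref{15marzo2018-2}, $\partial_j A=[\omega_j(0,u),A]$. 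Next I treat $\alpha=j$, $\beta=i$ with $i,j\geq1$: the coefficient of $z^2$ is $[E_i,E_j]=0$ (automatic), the coefficient of $z^0$ reproduces \eqref{24marzo2018-3}, which by Lemma~\ref{24marzo2018-2} is simultaneously the Frobenius integrability of \eqref{15marzo2018-2} and of the linear system \eqref{23marzo2018-7}, and the coefficient of $z^1$ gives one extra relation $[E_i,\omega_j(0,u)]=[E_j,\omega_i(0,u)]$.

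To solve \eqref{15marzo2018-1} I use that for any matrix $M$ one has $([\Lambda,M])_{ab}=(u_a-u_b)M_{ab}$, while $([E_j,A])_{ab}=\delta_{aj}A_{jb}-A_{aj}\delta_{bj}$ vanishes on the diagonal. Dividing by $u_a-u_b$ off the diagonal gives exactly the ``cross'' matrix of \eqref{15marzo2018-3} and leaves the diagonal entries of $\omega_j(0,u)$ undetermined; collecting these into the diagonal matrix $\mathcal{D}_j(u)$ proves \eqref{15marzo2018-3}. Writing $\omega_j(0,u)=\Omega_j+\mathcal{D}_j$ with $\Omega_j$ the off-diagonal ``cross'' part, I would then check that the leftover relation $[E_i,\Omega_j]=[E_j,\Omega_i]$ holds identically: both sides are supported only at the entries $(i,j)$ and $(j,i)$, and a one-line verification of those two entries using \eqref{15marzo2018-3} confirms equality, the diagonal pieces dropping out since $[E_i,\mathcal{D}_j]=0$. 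Hence the $z^1$ relation is not an independent constraint, and the full system \eqref{11marzo2018-3} is equivalent to $1)$ and $2)$.

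It remains to establish $\dd B=0$ and $\mathcal{D}_j=\partial_j\mathcal{D}$. For the former I take the diagonal part of \eqref{15marzo2018-2}: since $\mathcal{D}_j$ is diagonal it contributes nothing to the diagonal of a commutator, and a direct computation with the cross structure of $\Omega_j$ gives $([\Omega_j,A])_{aa}=0$ for every $a$, so $\partial_j A_{aa}=0$ and $B$ is constant. For the latter I take the diagonal part of \eqref{24marzo2018-3}: the derivatives of the off-diagonal $\Omega_i$ do not touch the diagonal, the mixed products $\Omega_i\mathcal{D}_j$ and $\mathcal{D}_i\Omega_j$ vanish on the diagonal, the pure terms $\mathcal{D}_i\mathcal{D}_j$ cancel since scalars commute, and the only survivors are $(\Omega_i\Omega_j)_{aa}$ and $(\Omega_j\Omega_i)_{aa}$, which I would check are equal for all $a$, i.e.\ the diagonal of $[\Omega_i,\Omega_j]$ vanishes. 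What remains is $\partial_j(\mathcal{D}_i)_{aa}=\partial_i(\mathcal{D}_j)_{aa}$, the closedness of each diagonal $1$-form $\sum_j(\mathcal{D}_j)_{aa}\dd u_j$; the Poincar\'e lemma on the polydisc $\mathbb{D}\big(u^0\big)$ then produces a diagonal potential $\mathcal{D}(u)$ with $\mathcal{D}_j=\partial_j\mathcal{D}$.

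The main obstacle is the two ``cross-structure'' index computations: showing that the spurious $z^1$ relation is a consequence of \eqref{15marzo2018-1}, and that $[\Omega_i,\Omega_j]$ has vanishing diagonal (so that the diagonal of \eqref{24marzo2018-3} collapses to the clean closedness condition for the $\mathcal{D}_j$). Both are elementary but demand care in tracking which entries of the sparse matrices $\Omega_j$, nonzero only along row and column $j$, actually survive in the relevant products; the remaining steps are routine bookkeeping.
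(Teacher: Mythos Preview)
Your proposal is correct and follows essentially the same route as the paper's proof: expand the Frobenius conditions \eqref{11marzo2018-3} in powers of $z$ for the pairs $(0,j)$ and $(i,j)$, read off \eqref{15marzo2018-1} and \eqref{15marzo2018-2} from the $(0,j)$ block, read off \eqref{26marzo2018-1} and \eqref{15marzo2018-8} from the $(i,j)$ block, solve \eqref{15marzo2018-1} entrywise to get \eqref{15marzo2018-3}, note that \eqref{26marzo2018-1} is then automatic, and finally take diagonals of \eqref{15marzo2018-2} and \eqref{15marzo2018-8} to obtain $\dd B=0$ and the closedness of $\sum_j\mathcal{D}_j\,\dd u_j$. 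The only difference is cosmetic: you spell out the two ``cross-structure'' verifications (that $[E_i,\Omega_j]=[E_j,\Omega_i]$ and that the diagonal of $[\Omega_i,\Omega_j]$ vanishes) which the paper simply asserts follow by substitution and direct calculation.
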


\begin{proof} For brevity, let us write $\omega_j(0)$ in place of $\omega_j(0,u)$. As in the proof of Lemma \ref{24marzo2018-2}, consider \eqref{11marzo2018-3} for $\beta=0$, $\alpha=j=1,\dots,n$:
\begin{gather*}
\frac{\partial}{\partial u_j}\left(\Lambda+\frac{A}{z}\right)+\left(\Lambda+\frac{A}{z}\right)(zE_j+\omega_j(0))=
\frac{\partial}{\partial z}(zE_j+\omega_j(0))+(zE_j+\omega_j(0))\left(\Lambda+\frac{A}{z}\right).
\end{gather*}
Since $\partial_j\Lambda=E_j$, we have
\begin{gather*}
\cancel{E_j}+\frac{\partial_jA}{z}+\cancel{z\Lambda E_j}+\Lambda\omega_j(0)+AE_j+\frac{A\omega_j(0)}{z}=\cancel{E_j}+\cancel{zE_j\Lambda} +E_jA+\omega_j(0)\Lambda+\frac{\omega_j(0)A}{z}.
\end{gather*}
The above identity is equivalent to \eqref{15marzo2018-1} and \eqref{15marzo2018-2}. Consider now \eqref{11marzo2018-3} for $\alpha=k$, $\beta=j\in\{1,\dots,n\}$:
\begin{gather*}
\frac{\partial}{\partial u_k}(zE_j+\omega_j(0))+(zE_j+\omega_j(0))(zE_k+\omega_k(0))\\
\qquad{} =\frac{\partial}{\partial u_j}(zE_k+\omega_k(0))+(zE_k+\omega_k(0))(zE_j+\omega_j(0)),
\end{gather*}
so that
\begin{gather*}
\frac{\partial \omega_j(0)}{\partial u_k}+\cancel{z^2E_jE_k}+z(E_j\omega_k(0)+\omega_j(0)E_k)+\omega_j(0)\omega_k(0)\\
\qquad{} =\frac{\partial \omega_k(0)}{\partial u_j}+\cancel{z^2E_kE_j}+z(E_k\omega_j(0)+\omega_k(0)E_j)+\omega_k(0)\omega_j(0).
\end{gather*}
Identifying the coefficients of the same powers of $z$ we get
\begin{gather}\label{26marzo2018-1}
[E_j,\omega_k(0)]=[E_k,\omega_j(0)],\\
\label{15marzo2018-8}
\frac{\partial \omega_j(0)}{\partial u_k}+\omega_j(0)\omega_k(0)=\frac{\partial \omega_k(0)}{\partial u_j}+\omega_k(0)\omega_j(0),\qquad 1\leq j\neq k \leq n.
\end{gather}
\eqref{15marzo2018-8} is the Frobenius integrability condition of system \eqref{15marzo2018-2}, as it is shown in the proof of Lemma~\ref{24marzo2018-2}. Explicitly writing \eqref{15marzo2018-1}, a simple calculation yields \eqref{15marzo2018-3}. Now, \eqref{26marzo2018-1} is identically satisfied. Substitution of \eqref{15marzo2018-3} into \eqref{15marzo2018-8} shows, by considering the diagonal, that
\begin{gather*}
\frac{\partial \mathcal{D}_j}{\partial u_k}-\frac{\partial \mathcal{D}_k}{\partial u_j}=0, \qquad j\neq k,
\end{gather*}
so that the matrix valued form $\sum\limits_{j=1}^n\mathcal{D}_j(u) \dd u_j$ is closed. Being the domain simply connected, it is also exact. Substitution of \eqref{15marzo2018-3} into \eqref{15marzo2018-2} and direct calculation shows that
\begin{gather*}
\frac{\partial A_{kk}}{\partial u_1}=\cdots=\frac{\partial A_{kk}}{\partial u_n}=0,\qquad \forall\, k=1,\dots,n.\tag*{\qed}
\end{gather*}\renewcommand{\qed}{}
\end{proof}

\begin{proof}[Proof of Proposition \ref{16marzo2018-1}] The assumption that $\omega_j(z,u)$ has at most a pole at $z=\infty$ implies that Lemma~\ref{16marzo2018-2} applies for $u\in \mathbb{D}^\prime$. Hence
\begin{gather}\label{17marzo2018-1}
\omega_j(z,u)=zE_j+[F_1(u),A(u)]+ \mathcal{D}_j(u) \qquad \text{for $u\in \mathbb{D}^\prime$}.
\end{gather}
Now, Lemma \ref{16marzo2018-3} applies on $\mathbb{D}^\prime$. On the other hand, $\omega_j(z,u)$ and $zE_j+[F_1(u),A(u)]$ are holomorphic on $\mathbb{D}\big(u^0\big)$, therefore~\eqref{17marzo2018-1} holds on~$\mathbb{D}\big(u^0\big)$. All the other statements of Proposition~\ref{16marzo2018-1} follow from Lemmas~\ref{16marzo2018-2} and~\ref{16marzo2018-3}.
\end{proof}

\subsection*{Acknowledgements} I am very grateful to the referee for valuable suggestions which improved the paper, especially concerning the importance of reference~\cite{YT} for the characterisation of weak and strong isomo\-nodromic deformations.

\pdfbookmark[1]{References}{ref}
\LastPageEnding


\begin{thebibliography}{99}
\footnotesize\itemsep=0pt

\bibitem{AnosBol}
Anosov D.V., Bolibruch A.A., The {R}iemann--{H}ilbert problem, \href{https://doi.org/10.1007/978-3-322-92909-9}{\textit{Aspects
 of Mathematics}}, Vol.~22, Friedr. Vieweg \& Sohn, Braunschweig, 1994.

\bibitem{BJL1}
Balser W., Jurkat W.B., Lutz D.A., Birkhoff invariants and {S}tokes'
 multipliers for meromorphic linear differential equations, \href{https://doi.org/10.1016/0022-247X(79)90217-8}{\textit{J.~Math.
 Anal. Appl.}} \textbf{71} (1979), 48--94.

\bibitem{BJL2}
Balser W., Jurkat W.B., Lutz D.A., A general theory of invariants for
 meromorphic differential equations. {I}.~{F}ormal invariants,
 \textit{Funkcial. Ekvac.} \textbf{22} (1979), 197--221.

\bibitem{BJL3}
Balser W., Jurkat W.B., Lutz D.A., A general theory of invariants for
 meromorphic differential equations. {II}.~{P}roper invariants,
 \textit{Funkcial. Ekvac.} \textbf{22} (1979), 257--283.

\bibitem{BJL4}
Balser W., Jurkat W.B., Lutz D.A., On the reduction of connection problems for
 differential equations with an irregular singular point to ones with only
 regular singularities.~{I}, \href{https://doi.org/10.1137/0512060}{\textit{SIAM~J. Math. Anal.}} \textbf{12} (1981),
 691--721.

\bibitem{BF}
Bibilo Yu., Filipuk G., Middle convolution and non-{S}chlesinger deformations,
 \href{https://doi.org/10.3792/pjaa.91.66}{\textit{Proc. Japan Acad. Ser.~A Math. Sci.}} \textbf{91} (2015), 66--69.

\bibitem{Bolibruch0}
Bolibruch A.A., The fundamental matrix of a {P}faffian system of {F}uchs type,
 \href{https://doi.org/10.1070/IM1977v011n05ABEH001757}{\textit{Math. USSR Izv.}} \textbf{11} (1977), 1031–--1054.

\bibitem{Bolibruch}
Bolibruch A.A., On isomonodromic deformations of {F}uchsian systems,
 \href{https://doi.org/10.1023/A:1021881809587}{\textit{J.~Dynam. Control Systems}} \textbf{3} (1997), 589--604.

\bibitem{Bolibruch1}
Bolibruch A.A., On isomonodromic confluences of {F}uchsian singularities,
 \textit{Proc. Steklov Inst. Math.} \textbf{221} (1998), 117--132.

\bibitem{Cotti1}
Cotti G., Coalescence phenomenon of quantum cohomology of {G}rassmannians and
 the distribution of prime numbers, \href{https://arxiv.org/abs/1608.06868}{arXiv:1608.06868}.

\bibitem{CDG}
Cotti G., Dubrovin B., Guzzetti D., Isomonodromy deformations at an irregular
 singularity with coalescing eigenvalues, \textit{Duke Math.~J.}, {t}o appear,
 \href{https://arxiv.org/abs/1706.04808}{arXiv:1706.04808}.

\bibitem{CDG1}
Cotti G., Dubrovin B., Guzzetti D., Local moduli of semisimple frobenius
 coalescent structures, \href{https://arxiv.org/abs/1712.08575}{arXiv:1712.08575}.

\bibitem{C-et-all}
Cotti G., Dubrovin B., Guzzetti D., Helix structures in quantum cohomology of
 {F}ano varieties, {i}n preparation.

\bibitem{Eretico2}
Cotti G., Guzzetti D., Analytic geometry of semisimple coalescent {F}robenius
 structures, \href{https://doi.org/10.1142/S2010326317400044}{\textit{Random Matrices Theory Appl.}} \textbf{6} (2017), 1740004,
 36~pages.

\bibitem{Eretico1}
Cotti G., Guzzetti D., Results on the extension of isomonodromy deformations to
 the case of a resonant irregular singularity, \href{https://doi.org/10.1142/S2010326318400038}{\textit{Random Matrices Theory
 Appl.}}, {t}o appear.

\bibitem{Dub1}
Dubrovin B., Geometry of {$2$}{D} topological field theories, in Integrable
 Systems and Quantum Groups ({M}ontecatini {T}erme, 1993), \href{https://doi.org/10.1007/BFb0094793}{\textit{Lecture
 Notes in Math.}}, Vol.~1620, Springer, Berlin, 1996, 120--348, \href{https://arxiv.org/abs/hep-th/9407018}{hep-th/9407018}.

\bibitem{Dub3}
Dubrovin B., Geometry and analytic theory of {F}robenius manifolds,
 Proceedings of the {I}nternational {C}ongress of {M}athematicians,
 {V}ol.~{II} ({B}erlin, 1998),
\textit{Doc. Math.} (1998), Extra Vol.~II, 315--326,
 \href{https://arxiv.org/abs/math.AG/9807034}{math.AG/9807034}.

\bibitem{Dub2}
Dubrovin B., Painlev\'e transcendents in two-dimensional topological field
 theory, in The {P}ainlev\'e Property, Editor R.~Conte, \href{https://doi.org/10.1007/978-1-4612-1532-5_6}{\textit{CRM Ser. Math. Phys.}},
 Springer, New York, 1999, 287--412, \href{https://arxiv.org/abs/math.AG/9803107}{math.AG/9803107}.

\bibitem{Dub4}
Dubrovin B., On almost duality for {F}robenius manifolds, in Geometry,
 Topology, and Mathematical Physics, \href{https://doi.org/10.1090/trans2/212/05}{\textit{Amer. Math. Soc. Transl. Ser.~2}},
 Vol.~212, Amer. Math. Soc., Providence, RI, 2004, 75--132,
 \href{https://arxiv.org/abs/math.DG/0307374}{math.DG/0307374}.

\bibitem{GGI}
Galkin S., Golyshev V., Iritani H., Gamma classes and quantum cohomology of
 {F}ano manifolds: gamma conjectures, \href{https://doi.org/10.1215/00127094-3476593}{\textit{Duke Math.~J.}} \textbf{165}
 (2016), 2005--2077, \href{https://arxiv.org/abs/1404.6407}{arXiv:1404.6407}.

\bibitem{Gant}
Gantmacher F.R., The theory of matrices, {V}ol.~1, AMS Chelsea Publishing,
 Providence, RI, 1998.

\bibitem{Alcala}
Guzzetti D., Deformations with a resonant irregular singularity, in Proceedings
 of the Workshop Formal and Analytic Solutions of Differential Equations
 (Alcal\'a, September 4--8, 2017), {t}o appear.

\bibitem{guz2016}
Guzzetti D., On {S}tokes matrices in terms of connection coefficients,
 \href{https://doi.org/10.1619/fesi.59.383}{\textit{Funkcial. Ekvac.}} \textbf{59} (2016), 383--433, \href{https://arxiv.org/abs/1407.1206}{arXiv:1407.1206}.

\bibitem{Haraoka}
Haraoka Y., Linear differential equations in the complex domain, Suugaku
 Shobou, Tokyo, 2015 (in Japanese).

\bibitem{Sh11}
Hsieh P.-F., Sibuya Y., Note on regular perturbations of linear ordinary
 differential equations at irregular singular points, \textit{Funkcial.
 Ekvac.} \textbf{8} (1966), 99--108.

\bibitem{IKSY}
Iwasaki K., Kimura H., Shimomura S., Yoshida M., From {G}auss to {P}ainlev\'e.
 A~modern theory of special functions, \href{https://doi.org/10.1007/978-3-322-90163-7}{\textit{Aspects of Mathematics}},
 Vol.~16, Friedr. Vieweg \& Sohn, Braunschweig, 1991.

\bibitem{JMU}
Jimbo M., Miwa T., Ueno K., Monodromy preserving deformation of linear ordinary
 differential equations with rational coefficients. {I}.~{G}eneral theory and
 {$\tau$}-function, \href{https://doi.org/10.1016/0167-2789(81)90013-0}{\textit{Phys.~D}} \textbf{2} (1981), 306--352.

\bibitem{KV}
Katsnelson V., Volok D., Deformations of {F}uchsian systems of linear
 differential equations and the {S}chle\-singer system, \href{https://doi.org/10.1007/s11040-005-9004-6}{\textit{Math. Phys.
 Anal. Geom.}} \textbf{9} (2006), 135--186, \href{https://arxiv.org/abs/math.CA/0506328}{math.CA/0506328}.

\bibitem{Kitaev}
Kitaev A.V., Non-{S}chlesinger deformations of ordinary differential equations
 with rational coefficients, \href{https://doi.org/10.1088/0305-4470/34/11/318}{\textit{J.~Phys.~A: Math. Gen.}} \textbf{34}
 (2001), 2259--2272, \href{https://arxiv.org/abs/nlin.SI/0102019}{nlin.SI/0102019}.

\bibitem{Po}
Poberezhnyi V.A., General linear problem of the isomonodromic deformation of
 {F}uchsian systems, \href{https://doi.org/10.1134/S0001434607030303}{\textit{Math. Notes}} \textbf{81} (2007), 529--542.

\bibitem{SCHA}
Sch\"afke R., Confluence of several regular singular points into an irregular
 singular one, \href{https://doi.org/10.1023/A:1022888516938}{\textit{J.~Dynam. Control Systems}} \textbf{4} (1998), 401--424.

\bibitem{Sh4}
Sibuya Y., Simplification of a system of linear ordinary differential equations
 about a singular point, \textit{Funkcial. Ekvac.} \textbf{4} (1962), 29--56.

\bibitem{Sh2}
Sibuya Y., Perturbation of linear ordinary differential equations at irregular
 singular points, \textit{Funkcial. Ekvac.} \textbf{11} (1968), 235--246.

\bibitem{YT}
Yoshida M., Takano K., On a linear system of {P}faffian equations with regular
 singular points, \textit{Funkcial. Ekvac.} \textbf{19} (1976), 175--189.

\end{thebibliography}
\end{document}